\newtheorem{theorem}{Theorem}
\newtheorem{lemma}[theorem]{Lemma}
\newtheorem{corollary}[theorem]{Corollary}
\newtheorem*{conjecture}{Conjecture}
\newtheorem{definition}[theorem]{Definition}
\newtheorem{proposition}[theorem]{Proposition}
\newcommand{\THM}[1]{Theorem \ref{#1}}\newcommand{\THMS}[2]{Theorems \ref{#1} and \ref{#2}}
\newcommand{\SEC}[1]{Section \ref{#1}}\newcommand{\SECS}[2]{Sections \ref{#1} and \ref{#2}}
\newcommand{\LEM}[1]{Lemma \ref{#1}}
\newcommand{\LEMS}[2]{Lemmas \ref{#1} and \ref{#2}}
\newcommand{\COR}[1]{Corollary \ref{#1}}
\newcommand{\PROP}[1]{Proposition \ref{#1}} \newcommand{\PROPS}[2]{Propositions \ref{#1} and \ref{#2}}
\newcommand{\DEF}[1]{Definition \ref{#1}}
\newcommand{\la}{\lambda}
\newcommand{\Z}{\mathbb{Z}}
\newcommand{\Q}{\mathbb{Q}}
\newcommand{\C}{\mathbb{C}}
\newcommand{\HH}{\mathbb{H}}
\newcommand{\embedded}{\hookrightarrow}
\DeclareMathOperator{\cod}{cod}
\newcommand{\floor}[1]{\left\lfloor #1 \right\rfloor}
\newcommand{\of}[1]{\left(#1\right)}
\newcommand{\dk}[1]{\dim\ker\of{#1}}
\newcommand{\lra}{\longrightarrow}
\DeclareMathOperator{\Isom}{Isom}
\DeclareMathOperator{\rank}{rank}
\DeclareMathOperator{\bodd}{b_{\mathrm{odd}}}
\author{Lee Kennard}
\title{On the Hopf conjecture with symmetry}
\begin{document}

\begin{abstract}
The Hopf conjecture states that an even-dimensional, positively curved Riemannian manifold has positive Euler characteristic. We prove this conjecture under the additional assumption that a torus acts by isometries and has dimension bounded from below by a logarithmic function of the manifold dimension. The main new tool is the action of the Steenrod algebra on cohomology.
\end{abstract}
\maketitle

Positively curved spaces have been of interest since the beginning of global Riemannian geometry. Unfortunately, there are few known examples (see \cite{Ziller07} for a survey and \cite{Petersen-Wilhelm09pre, Dearricott11, Grove-Verdiani-Ziller11} for recent examples) and few topological obstructions to any given manifold admitting a positively curved metric. In fact, all known simply connected examples in dimensions larger than 24 are spheres and projective spaces, and all known obstructions to positive curvature for simply connected manifolds are already obstructions to nonnegative curvature.

One famous conjectured obstruction to positive curvature was made by H. Hopf in the 1930s. It states that even-dimensional manifolds admitting positive sectional curvature have positive Euler characteristic. This conjecture holds in dimensions two and four by the theorems of Gauss-Bonnet or Bonnet-Myers (see \cite{Bishop-Goldberg64, Chern56}), but it remains open in higher dimensions.

In the 1990s, Karsten Grove proposed a research program to address our lack of knowledge in this subject. The idea is to study positively curved metrics with large isometry groups. This approach has proven to be quite fruitful (see \cite{Wilking07, Grove09} for surveys). Our main result falls into this category:

\begin{theorem}\label{chi>0}
Let $M^n$ be a closed Riemannian manifold with positive sectional curvature and $n\equiv 0\bmod{4}$. If $M$ admits an effective, isometric $T^r$-action with $r \geq 2\log_2 n$, then $\chi(M)>0$.
\end{theorem}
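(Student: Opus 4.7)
The plan is to proceed by induction on the dimension $n$, using three ingredients: (1) Conner's formula $\chi(M) = \chi(M^T)$ for torus actions on finite-dimensional CW complexes; (2) Wilking's connectedness theorem, which says that a closed, totally geodesic submanifold $F^{n-k} \subseteq M^n$ of a positively curved manifold is $(n-2k+1)$-connected; and (3) the action of the mod-$2$ Steenrod algebra on $H^*(M; \mathbb{F}_2)$, which is the paper's main novel tool. Together with Berger's theorem (the fixed-point set of any isometric circle on an even-dimensional positively curved manifold is nonempty), these reduce the claim to showing that $\chi(M^T) > 0$, equivalently that some component of $M^T$ has positive Euler characteristic and no other contributes negatively.

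First I would select a circle subgroup $\sigma \subseteq T^r$ whose fixed-point set has a component $N$ of minimal codimension $2k$ in $M$. Then $N$ is a closed, totally geodesic, positively curved manifold of even dimension on which the residual torus $T^{r-1} = T^r/\sigma$ acts almost effectively. I would then split into two regimes. In the \emph{high-codimension} regime $2k \geq n/2$ one has $\dim N \leq n/2$, so the inductive estimate $2\log_2(\dim N) \leq 2\log_2 n - 2 \leq r - 2$ lets the inductive hypothesis apply to $N$, giving $\chi(N) > 0$, and then applying Conner's formula inside $N$ propagates positivity up the subtorus lattice to $\chi(M^T)$. In the \emph{low-codimension} regime $2k < n/2$, Wilking's theorem forces $N \hookrightarrow M$ to be highly connected, so that $H^i(M; \mathbb{F}_2) \to H^i(N; \mathbb{F}_2)$ is an isomorphism in a large range of degrees, setting the stage for Steenrod-algebra arguments.

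The main obstacle, and the locus of the new idea, is this low-codimension regime: Wilking's connectedness controls cohomology only up through roughly degree $n - 4k$, and one must push vanishing of odd Betti numbers up to the top degree. The strategy is that the restriction $H^*(M; \mathbb{F}_2) \to H^*(N; \mathbb{F}_2)$ is $\mathcal{A}$-equivariant, so its image is a sub-$\mathcal{A}$-module; combined with the inductively known structure of $H^*(N; \mathbb{F}_2)$ (in particular vanishing odd Betti numbers), repeated application of $\mathrm{Sq}^i$-operations forces any hypothetical odd-degree class in $H^*(M; \mathbb{F}_2)$ to produce an inconsistency with the cohomology of $N$, hence $\bodd(M) = 0$ and $\chi(M) = \sum b_{2i}(M) > 0$. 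The hypothesis $n \equiv 0 \bmod 4$ enters here to keep the parities of the Steenrod operations aligned with the top class, and the logarithmic bound $r \geq 2\log_2 n$ is calibrated precisely so that the loss of one torus factor per induction step is compensated by the halving of dimension in the high-codimension case and by Wilking's connectedness in the low-codimension case, so that the induction closes.
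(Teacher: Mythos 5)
Your proposal assembles the right toolbox (Berger, the Lefschetz fixed-point formula $\chi(M)=\chi(M^T)$, Wilking's connectedness theorem, Steenrod operations, and an induction calibrated so that losing one torus factor is paid for by halving the dimension), but it is missing the two ideas that actually carry the paper's proof, and the step where you invoke the Steenrod algebra is not an argument. First, the mechanism by which Steenrod operations produce vanishing odd Betti numbers is \emph{not} that the restriction $H^*(M;\Z_2)\to H^*(N;\Z_2)$ to a single fixed-point component is $\mathcal{A}$-equivariant and "forces an inconsistency." The actual mechanism is a periodicity statement: a \emph{transverse pair} of totally geodesic submanifolds of codimensions $k_1,k_2$ with $2k_1+2k_2\le n$ makes $H^*(N;\Z)$ $k_1$-periodic in a sufficiently long range, and the Adem-relation arguments (for $p=2$ and $p=3$) upgrade this to $\gcd(4,k_1)$-periodicity of $H^*(N;\Q)$; only then does Poincar\'e duality together with $n\equiv 0\bmod 4$ force $\bodd=0$. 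This is also where $n\equiv 0\bmod 4$ genuinely enters: $4$-periodic manifolds with $n\equiv 2\bmod 4$, such as $(S^2\times S^4)\#(S^3\times S^3)\#\cdots$, have nonzero odd Betti numbers, so no argument that tries to kill odd-degree classes directly from periodicity alone can succeed without this congruence, and your "parity alignment" remark does not supply the needed step.

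Second, you have no procedure for \emph{producing} the transverse pair, which is the combinatorial heart of the proof: one fixes a component $F$ of $M^T$, passes to a minimal totally geodesic $N\supseteq F$ satisfying the dimension/symmetry condition, and analyzes the graph whose vertices are involutions $\sigma$ with $\cod F(\sigma)\equiv 0\bmod 4$ and small isotropy kernel, with edges recording non-transverse intersections; five separate cases are needed to extract a transverse pair of small total codimension. Your single-circle dichotomy also has concrete gaps: in the "high-codimension" regime, $\chi(N)>0$ for one component $N$ of $M^\sigma$ does not control $\chi(M)=\chi(M^\sigma)$, since other components could contribute negatively, and those components need not have dimension divisible by $4$, so your inductive hypothesis does not apply to them. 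The correct bookkeeping is Conner's inequality $\bodd(P^{T'})\le\bodd(P)$ (not the Euler-characteristic formula, which is Lefschetz's): one shows $\bodd(P)=0$ for a suitable $P$ with $F\subseteq P\subseteq M$ and concludes $\bodd(F)=0$, hence $\chi(F)>0$, for \emph{every} component $F$ of $M^T$.
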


Previous results showed that $\chi(M^n)>0$ under the assumption of a linear bound on $r$. For example, a positively curved $n$-manifold with an isometric $T^r$-action has positive Euler characterisic if $n$ is even and $r\geq n/8$ or if $n\equiv 0\bmod{4}$ and $r\geq n/10$ (see \cite{Rong-Su05, Su-Wang08}).

Theorem \ref{chi>0} easily implies similar results where the assumption on the symmetry rank, i.e. $\rank(\Isom(M))$, is replaced by one where the symmetry degree, i.e. $\dim(\Isom(M))$, is large or the cohomogeneity, i.e. $\dim(M/\Isom(M))$, is small (see Section \ref{Observations}).

A key tool is Wilking's connectedness theorem (see \cite{Wilking03}), which has proven to be fundamental in the study of positively curved manifolds with symmetry. The theorem relates the topology of a closed, positively curved manifold with that of its totally geodesic submanifolds of small codimension. Since fixed-point sets of isometries are totally geodesic, this becomes a powerful tool in the presence of symmetry.

Part of the utility of the connectedness theorem is to allow proofs by induction over the dimension of the manifold. Another important implication is a certain periodicity in cohomology. By using the action of the Steenrod algebra on cohomology, we refine this periodicity in some cases. For example, we prove:

\begin{theorem}[Periodicity Theorem]\label{periodicityTHM}
Let $N^n$ be a closed, simply connected, positively curved manifold which contains a pair of totally geodesic, transversely intersecting submanifolds of codimensions $k_1\leq k_2$. If $k_1 + 3k_2 \leq n$, then $H^*(N;\Q)$ is $\gcd(4,k_1,k_2)$-periodic.
\end{theorem}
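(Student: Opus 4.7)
The plan is to first establish $\gcd(k_1,k_2)$-periodicity of $H^*(N;\Q)$ using Wilking's connectedness theorem in the standard way, and then to refine the period to $\gcd(4,k_1,k_2)$ by invoking the action of the Steenrod algebra on mod~$2$ cohomology. The numerical hypothesis $k_1+3k_2\leq n$ should be exactly what is needed to upgrade ``periodicity in a range'' to global periodicity at both stages.

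For the first step, I would apply Wilking's connectedness theorem to the totally geodesic inclusions $M_i\embedded N$ and to $M_1\cap M_2\embedded M_j$ (each of which is a totally geodesic submanifold of a positively curved manifold). Combining the resulting high connectivities with Poincar\'e duality in the usual way produces classes $e_i\in H^{k_i}(N;\Z)$ such that $\cdot e_i$ is surjective on rational cohomology in a range of degrees. Under $k_1+3k_2\leq n$, that range should fill out the full interval $[0,n-k_i]$, so that $H^*(N;\Q)$ is separately $k_1$- and $k_2$-periodic. A B\'ezout combination of the two periodicity isomorphisms (using inverses for the negative coefficients) then yields $d$-periodicity with $d=\gcd(k_1,k_2)$ and a generator $e\in H^d(N;\Q)$.

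The harder and novel step is descending from period $d$ to period $\gcd(4,d)=\gcd(4,k_1,k_2)$. In rational cohomology an odd-degree class squares to zero, so periodicity with odd $d>1$ is impossible and hence $d$ is automatically even. To rule out $d$ of the form $2m$ or $4m$ with $m>1$ odd, I would reduce $e$ to $\bar e\in H^d(N;\Z/2)$ and exploit the identity $\bar e^2=\mathrm{Sq}^d(\bar e)$. When $d$ is not a power of $2$, the Adem relations force $\mathrm{Sq}^d$ to be a sum of composites $\mathrm{Sq}^a\mathrm{Sq}^b$ with $0<b<d$, so $\bar e^2$ is assembled from classes in strictly intermediate degrees; together with the $d$-periodic structure, this should exhibit a periodicity generator of degree strictly less than $d$, and one iterates until the period reaches $\gcd(4,d)$. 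The main obstacle is the interface between mod~$2$ and rational cohomology: the Steenrod operations live only in mod~$2$, yet the conclusion must be rational, and the mod~$2$ reduction is neither injective nor surjective in general. Lifting the Steenrod-algebraic output back to an honest rational periodicity generator, and verifying that multiplication by it is an isomorphism in \emph{every} degree rather than only in a range, is where the bulk of the technical work will lie.
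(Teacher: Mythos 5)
Your first step is essentially the paper's, but the descent from $\gcd(k_1,k_2)$-periodicity to $\gcd(4,k_1,k_2)$-periodicity cannot be carried out with mod $2$ Steenrod operations alone, and this is a genuine gap. The Adem-relation trick you describe — writing $\mathrm{Sq}^d$ as a sum of composites $\mathrm{Sq}^a\mathrm{Sq}^b$ with $0<b<d$ — works precisely when $d$ is \emph{not} a power of $2$, since the $\mathrm{Sq}^{2^i}$ are the indecomposables of the mod $2$ Steenrod algebra. So the most your argument can prove (and this is exactly Proposition \ref{mod2lemma} of the paper, a generalization of Adem's theorem) is that the minimal mod $2$ period is a power of $2$. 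That does not reach $\gcd(4,d)$: if $k_1=k_2=8$, your method stalls at period $8$, and the space $S^7\times\mathrm{Ca}P^2$ shows that $8$-periodic-but-not-$4$-periodic cohomology rings genuinely exist, so no purely formal manipulation of mod $2$ periodicity can close this. The paper gets to $4$ by running a parallel (and considerably more delicate) argument with the odd-primary Steenrod powers at $p=3$ (Proposition \ref{modplemma}), which forces a minimal $\Z_3$-period of the form $2\lambda\cdot 3^s$ with $\lambda\mid 2$; intersecting the two conclusions (a power of $2$, and $2\lambda\cdot 3^s$) via the difference-set argument is what pins the rational period down to a divisor of $4$. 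You would need to add this odd-prime input, including the hypothesis $pl\leq c$ that it requires — which is one reason the cubic bound $k_1+3k_2\leq n$ appears.

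A secondary, more technical point: the claim that the connectedness theorem plus $k_1+3k_2\leq n$ makes the range of surjectivity/injectivity for $\cdot e_i$ on $H^*(N;\Q)$ "fill out the full interval" is not right as stated. A single totally geodesic inclusion $N_i\embedded N$ of codimension $k_i$ is only $(n-2k_i+1)$-connected, which yields periodicity starting in degree $k_i-1$, never in degree $0$, no matter how small $k_i$ is. Full periodicity from degree $0$ comes only from the \emph{transverse pair}: $N_1\cap N_2\embedded N_2$ is $(n-k_1-k_2)$-connected and has codimension exactly $k_1$ in $N_2$, which gives genuine $k_1$-periodicity of $H^*(N_2;\Z)$; the paper then applies the Steenrod-algebra step on $N_2$ (where $3k_1\leq\dim N_2$ holds) and afterwards transfers the refined periodicity back up to $N$ by a separate case-by-case argument combining the classes $x$ and $\overline{e}_1$. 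Your outline would need this reorganization — refine on $N_2$ first, then lift to $N$ — or some substitute for it.
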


It follows from \cite{Wilking03} that, under these assumptions, $H^*(N;\Q)$ is $\gcd(k_1,k_2)$-periodic. For a closed, orientable $n$-manifold $N$ and a coefficient ring $R$, we say that $H^*(N;R)$ is $k$-periodic if there exists $x\in H^k(N;R)$ such that the map $H^i(N;R)\to H^{i+k}(N;R)$ induced by multiplication by $x$ is surjective for $0\leq i < n-k$ and injective for $0<i\leq n-k$.

To illustrate the strength of the conclusion of \THM{periodicityTHM}, we observe the following:
	\begin{itemize}
	\item If $\gcd(4,k_1,k_2) = 1$, then $N$ is a rational homology sphere.
	\item If $\gcd(4,k_1,k_2) = 2$, then $N$ has the rational cohomology of $S^n$ or $\C P^{n/2}$.
	\item If $\gcd(4,k_1,k_2) = 4$ and $n\not\equiv 2\bmod{4}$, then $N$ has the rational cohomology ring of $S^n$, $\C P^{n/2}$, $\HH P^{n/4}$, or $S^3\times \HH P^{(n-3)/4}$.
	\end{itemize}
When $\gcd(4,k_1,k_2)=4$ and $n\equiv 2\bmod{4}$, the rational cohomology rings of $S^n$, $\C P^{n/2}$, $S^2 \times \HH P^{(n-2)/4}$ and
	\[M^6 = (S^2\times S^4) \#(S^3\times S^3)\# \cdots \#(S^3\times S^3)\]
are 4-periodic, but we do not know whether other examples exist in dimensions greater than six. This uncertainty is what prevents us from proving \THM{chi>0} in all even dimensions (see \SEC{Observations}).

The main step in the proof of Theorem \ref{periodicityTHM} is the following topological result:

\begin{theorem}\label{THM4-newer}
If $M^n$ is a closed, simply connected manifold such that $H^*(M;\Z)$ is $k$-periodic with $3k\leq n$, then $H^*(M;\Q)$ is $\gcd(4,k)$-periodic.
\end{theorem}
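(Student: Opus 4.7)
The plan is to split according to the parity of $k$. The odd case is handled by graded commutativity alone, while the even case is a Steenrod-algebra argument that exploits the integrality of the periodicity generator $x$.

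When $k$ is odd, graded commutativity over $\Q$ gives $x^2 = -x^2$, hence $x^2 = 0$. Because $3k \leq n$, the periodicity hypothesis provides an isomorphism $\cdot x \colon H^k(M;\Q) \xrightarrow{\cong} H^{2k}(M;\Q)$ whose image is $\Q \cdot x^2 = 0$, so $H^k(M;\Q) = 0$. The further shift isomorphisms $H^j(M;\Q) \cong H^{j+k}(M;\Q)$ for $0 < j < n-k$ then propagate this vanishing, and Poincar\'e duality handles the top half, so that $H^j(M;\Q) = 0$ for all $0 < j < n$ and $M$ is a rational homology sphere. This is the $1 = \gcd(4,k)$-periodic conclusion.

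For even $k$, write $k = 2^a m$ with $m$ odd and $a \geq 1$, and put $d = \gcd(4,k) \in \{2,4\}$. Let $\bar x \in H^k(M;\mathbb{F}_2)$ denote the mod-2 reduction of $x$. Since $\bar x$ lifts integrally, $\mathrm{Sq}^1 \bar x = 0$; the unstable axiom gives $\mathrm{Sq}^k \bar x = \bar x^2$; and mod-2 $k$-periodicity writes $\mathrm{Sq}^i \bar x = \bar x \cdot a_i$ for unique $a_i \in H^i(M;\mathbb{F}_2)$ when $0 \leq i < n-k$. The key computation is to decompose $\mathrm{Sq}^k$ in the admissible basis via Adem relations, apply it to $\bar x$, and simplify using $\mathrm{Sq}^1 \bar x = 0$ together with instability ($\mathrm{Sq}^j \bar x = 0$ for $j > k$). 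By induction on the 2-adic valuation $a$, this simplification forces the existence of a class $\bar y \in H^d(M;\mathbb{F}_2)$ that generates the mod-2 periodicity. The hypothesis $3k \leq n$ is essential here: it provides enough room for $\bar x^2$ and $\bar x^3$ to be detected, and it is precisely what rules out the Cayley-plane-like scenario at $d = 8$, in direct parallel with the classical Adams--James restriction to $\C P^m$ and $\HH P^m$ among simply connected projective spaces of length at least three. Finally, $\bar y$ is lifted to a rational class $y \in H^d(M;\Q)$ using Bockstein vanishing (inherited from $\mathrm{Sq}^1 \bar x = 0$), and $d$-periodicity of $y$ follows from that of $\bar y$ via universal coefficients and Poincar\'e duality.

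The main obstacle is the inductive Steenrod-algebra bookkeeping in the even case: tracking which summands in the Adem expansion of $\mathrm{Sq}^k \bar x = \bar x^2$ survive after applying $\mathrm{Sq}^1 \bar x = 0$ and instability, and verifying that what remains encodes a periodicity generator in degree exactly $d$. This is analogous to Adams' argument that the $\mathrm{Sq}^{2^i}$ are the indecomposables of the Steenrod algebra, and the $3k \leq n$ hypothesis plays a role parallel to the length-at-least-three hypothesis in Adams--James.
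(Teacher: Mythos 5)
Your odd-$k$ argument is correct (and the conclusion $\gcd(4,k)=1$, i.e.\ rational homology sphere, matches the paper's). Your even-$k$ strategy also starts the same way as the paper's: reduce mod $2$, use periodicity to write $Sq^i(\bar x)=\bar x\, a_i$, and expand $Sq^k$ via the Adem relations. But there is a genuine gap at the crucial step. You claim that the Adem-relation bookkeeping, ``by induction on the $2$-adic valuation,'' produces a mod-$2$ periodicity generator in degree exactly $d=\gcd(4,k)$. Primary mod-$2$ operations cannot do this. The decomposition trick only bites when $Sq^l$ is decomposable, i.e.\ when $l$ is \emph{not} a power of $2$; since $Sq^{2^i}$ is indecomposable in the Steenrod algebra (a fact you yourself invoke), the induction stalls exactly at $l\in\{8,16,32,\dots\}$ and provides no way to descend from period $8$ to period $4$. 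This is precisely why \PROP{mod2lemma} concludes only that the minimal mod-$2$ period is \emph{a power of $2$}, and why the sharper statement ($k\in\{1,2,4\}$ under periodicity up to degree $3k$) appears in \SEC{Observations} only as a conjecture, expected to require secondary operations in the spirit of Adams. Your appeal to the ``Adams--James restriction'' concedes the point: that restriction is not a consequence of the Adem relations, and the hypothesis $3k\leq n$ does not substitute for secondary operations.

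The paper closes this gap by bringing in a second prime rather than pushing harder at $p=2$. \PROP{modplemma} with $p=3$ (applicable since $3k\leq n$) shows the minimal degree of a mod-$3$, hence after lifting a rational, periodicity element has the form $2\lambda 3^s$ with $\lambda\mid 2$. Combining this with the power-of-$2$ period coming from $p=2$, and using that the set of degrees of rational periodicity elements is closed under differences (via \LEM{modplemma1}), one obtains a rational period dividing $\gcd(4,k)$. To repair your proof you must either add this $p=3$ step or supply a genuinely new argument excluding minimal mod-$2$ period $8,16,\dots$. Separately, your final passage from $\mathbb{F}_2$ to $\Q$ needs more care than ``Bockstein vanishing'': the paper first shows $H^{l+1}(M;\Z)$ is finite, deduces that reduction $H^l(M;\Z)\to H^l(M;\Z_2)$ is surjective, lifts $y$ to $\tilde y$, and then verifies that $\tilde y^{k/l}$ is a nonzero multiple of the integral class $x$, so that multiplication by $\tilde y$ has finite kernel and its rational image induces periodicity.
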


To prove \THM{THM4-newer}, we note that the assumption implies the same periodicity with coefficients in $\Z_p$. We then use the action of the Steenrod algebra for $p=2$ and $p=3$ to improve these periodicity statements with coefficients in $\Z_p$. When combined, this information implies $\gcd(4,k)$-periodicity with coefficients in $\Q$. See \PROPS{mod2lemma}{modplemma} in \SECS{SECmod2periodicity}{SECmodpperiodicity} for a more general periodicity statement with coefficients in $\Z_p$, which can be viewed as a generalization of Adem's theorem on singly generated cohomology rings (see \cite{Adem52}).
\smallskip

With the periodicity theorem in hand, we briefly explain some of the tools that go into the proof of \THM{chi>0}. The starting point is a theorem of Lefschetz which states that the Euler characteristic satisfies $\chi(M) = \chi(M^T)$, where $M^T$ is the fixed-point set of the torus action. Since $M$ is even-dimensional with positive curvature, $M^T$ is nonempty by a theorem of Berger. Writing $\chi(M^T) = \sum \chi(F)$ where the sum runs over components $F$ of $M^T$, we see that it suffices to show $\chi(F) > 0$ for all $F$. In fact, we prove that $F$ has vanishing odd Betti numbers. An important tool is a theorem of Conner, which states that, if $P$ is a manifold on which $T$ acts, then $\bodd(P^T) \leq \bodd(P)$, where $\bodd$ denotes the sum of the odd Betti numbers. The strategy is to find a submanifold $P$ on which a subtorus $T'\subseteq T$ acts such that $\bodd(P) = 0$ and such that $F$ is a component of $P^{T'}$.

In order to find such a submanifold $P$, we investigate the web of fixed-point sets of $H\subseteq T$, where $H$ ranges over subgroups of involutions. These fixed-point sets are totally geodesic submanifolds on which $T$ acts, so, under the right conditions, we can induct over dimension. In addition, studying fixed-point sets of involutions has the added advantage that we can easily control the intersection data by studying the isotropy representation at a fixed-point of $T$. 

In order to apply the periodicity theorem, we must find a transverse intersection in the web of fixed-point sets of subgroups of involutions. To strip away complication while preserving the required codimension, symmetry, and intersection data, we define an abstract graph $\Gamma$ where the vertices correspond to involutions whose fixed-point sets satisfy certain codimension and symmetry conditions. An edge exists between two involutions if the intersection of the corresponding fixed-point sets is not transverse. We then break up the proof into several parts, corresponding to the structure of this graph.

\smallskip
Here is a short description of the individual sections. In Sections \ref{SECmod2periodicity} and \ref{SECmodpperiodicity}, we prove the mod 2 and mod $p$ generalizations, respectively, of Adem's theorem on singly generated cohomolgy rings. In \SEC{SECmodQperiodicity}, we bring together the results of the previous two sections to prove \THM{THM4-newer}. In \SEC{SECperiodicitytheorem}, we prove \THM{periodicityTHM}, and in \SEC{ProofTHMchi>0}, we prove \THM{chi>0}. Finally, in \SEC{Observations}, we derive a corollary of \THM{chi>0}, state a periodicity conjecture which would generalize Adams's theorem on singly generated cohomology rings, and we explain how a proof of this conjecture would imply that \THM{chi>0} holds in all even dimensions.

\smallskip This work is part of the author's Ph.D. thesis. The author would like to thank his advisor, Wolfgang Ziller, for his encouragement and numerous suggestions along the way. The author would also like to thank Anand Dessai and Jason DeVito for helpful comments.

\bigskip
\section{Periodicity with coefficients in $\Z_2$}\label{SECmod2periodicity}
\bigskip

We recall the basic definition of periodicity for reference:
\begin{definition}
For a topological space $M$, a ring $R$, and an integer $c$, we say that $x\in H^k(M;R)$ induces periodicity in $H^*(M;R)$ up to degree $c$ if the maps $H^i(M;R) \to H^{i+k}(M;R)$ given by multiplication by $x$ are surjective for $0\leq i<c-k$ and injective for $0<i\leq c-k$.

When such an element $x\in H^k(M;R)$ exists, we say that $H^*(M;R)$ is $k$-periodic up to degree $c$. If in addition $M$ is a closed, orientable manifold and $c = \dim(M)$, then we say that $H^*(M;R)$ is $k$-periodic.
\end{definition}

Observe, for example, that the definition implies $\dim_R H^{ik}(M;R) \leq 1$ for $ik < c$ when $R$ is a field and $M$ is connected. As another example, when $M^n$ is a closed, simply manifold with $k$-periodic $\Z_p$-cohomology ring, then $H^{1+ik}(M;\Z_p) = 0$ and $H^{n-1-ik}(M;\Z_p) = 0$ for all $i$.

We remark that $H^*(M;R)$ is trivially $k$-periodic up to degree $c$ when $k\geq c$. By a slight abuse of notation, we also say that $H^*(M;R)$ is $k$-periodic if $2k\leq c$ and $H^i(M;R)=0$ for $0<i<c$. One thinks of 0 as the element inducing periodicity. This convention simplifies the discussion.

We start with a general lemma about periodicity:

\begin{lemma}\label{modplemma1}
Let $R$ be a field. If $x\in H^k(M;R)$ is a nonzero element inducing periodicity up to degree $c$ with $2k\leq c$, and if $x^r = yz$ for some $1\leq r\leq c/k$ with $\deg(y) \not\equiv 0 \bmod{k}$, then $y$ also induces periodicity.

In particular, if $x = yz$ with $0<\deg(y)<k$, then $y$ induces periodicity.
\end{lemma}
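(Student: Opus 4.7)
The plan is to reduce to the ``in particular'' case via a factorization argument and then prove that case directly.

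For the reduction, write $\deg(y) = qk + s$ with $0 < s < k$. Iterating the surjectivity part of $x$-periodicity, I would express $y = y' x^q$ for some $y' \in H^s(M;R)$. Substituting into $x^r = yz$ and using the injectivity part of $x$-periodicity to cancel $x^q$ yields $x^{r-q} = y' z$. If $r - q \geq 2$, then $\deg(z) > (r-q-1)k$, so the same surjectivity-then-cancellation trick factors $z = z'' x^{r-q-1}$ and reduces further to $x = y' z''$, which is the in-particular case. Once $y'$ induces $s$-periodicity up to degree $c$, the identity $\cdot y = (\cdot x^q) \circ (\cdot y')$ (from $y = y' x^q$ and associativity) combines with the $qk$-periodicity of $x^q$ to give that $y$ induces $(s + qk)$-periodicity up to degree $c$, since compositions of surjective (resp.\ injective) maps are surjective (resp.\ injective).

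For the in-particular case $x = yz$ with $0 < \deg(y) = j < k$, injectivity of $\cdot y \colon H^i \to H^{i+j}$ on the range $0 < i \leq c - k$ is immediate: if $\alpha y = 0$, then $\alpha x = \alpha y z = 0$, which forces $\alpha = 0$ by $x$-injectivity. For the additional range $c - k < i \leq c - j$, I would induct on $i$: since $i > c - k \geq k$, surjectivity of $\cdot x$ provides $\beta \in H^{i-k}$ with $\alpha = \beta x$; then $0 = \alpha y = \beta x y = \pm \beta y x$, and $x$-injectivity applied to $\beta y \in H^{i - k + j}$ yields $\beta y = 0$; the inductive hypothesis then gives $\beta = 0$ and hence $\alpha = 0$. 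For surjectivity of $\cdot y$ on $0 \leq i < c - j$, I would split into two overlapping subranges. For $i \geq k - j$, write $\gamma = \beta x$ via $x$-surjectivity, so $\gamma = \beta y z = \pm (\beta z) y$ exhibits $\gamma$ in the image of $\cdot y$. For $i \leq c - k - j$, pick $\alpha \in H^i$ with $\alpha x = \gamma z$ by $x$-surjectivity, whence $(\gamma - \alpha y) z = 0$; then verify $\cdot z$ is injective on $H^{i+j}$, because $w z = 0$ gives $w x = w y z = \pm (w z) y = 0$ and so $w = 0$ by $x$-injectivity. The bound $c \geq 2k$ ensures that these two subranges cover $[0, c - j)$.

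The main obstacle is the extension of injectivity beyond the naive range $i \leq c - k$ up to $i \leq c - j$, which is what distinguishes genuine $j$-periodicity from the weaker fact that $y$ happens to be a factor of a periodicity-inducing element. This extension is made possible by the inductive trick above, which requires $i > k$ (so needs $c \geq 2k$) and simultaneously exploits both halves of $x$-periodicity at intermediate degrees.
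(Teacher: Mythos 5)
Your proof is correct and follows essentially the same route as the paper's: both reduce to the case $x = y'z'$ with $0<\deg(y')<k$ (the paper by writing $y$ and $z$ as degree-reduced elements times powers of $x$ and normalizing $y'z'=ax$ with $a\neq 0$, you by peeling off powers of $x$ via surjectivity and cancelling via injectivity), and both then transfer injectivity and surjectivity from multiplication by $x$ to multiplication by $y'$ by factoring the former through the latter and using commutativity with $x$ to handle the boundary degree ranges. The only differences are cosmetic, e.g.\ your explicit $z$-injectivity argument for surjectivity in low degrees, where the paper simply declares the surjectivity half ``similar.''
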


The way in which we will use this lemma is to take an element $x$ of minimal degree that induces periodicity, and to conclude that the only factorizations of $x^r$ are those of the form $(ax^s)(bx^{t})$ where $a,b\in R$ are multiplicative inverses and $r = s+t$.
\begin{proof}
Use periodicity to write $y=y'y''$ and $z=z'z''$ where $y''$ and $z''$ are powers of $x$, $0<\deg(y')<l$, and $0<\deg(z')<l$. Since $x$ generates $H^k(M;R)$, it follows that $y'z' = ax$ for some multiple $a\in R$. If $a=0$, then $x^r = (ax)y''z'' = 0$, a contradiction to periodicity and the assumption that $x\neq 0$. Supposing therefore that $a\neq 0$, we may multiply by $a^{-1}$ to assume without loss of generality that $x = y'z'$. Since $y''$ is a multiple of $x$, we note that it suffices to show that $y'$ induces periodicity up to degree $c$. Let $k' = \deg(y')$.

Since multiplying by $x$ is injective from $H^i(M;\Z_2)\to H^{i+k}(M;\Z_2)$, and since this map factors as multiplication by $y'$ followed by multiplication by $z'$, it follows that multiplication by $y'$ from $H^i(M;\Z_2)\to H^{i+k'}(M;\Z_2)$ is injective for $0<i\leq c-k$. In addition, to see that multiplication by $y'$ is injective from $H^i(M;\Z_2)\to H^{i+k'}(M;\Z_2)$ for $c-k < i \leq c-k'$, consider that multiplying by $y'$ and then by $x$ from
	\[H^{i-k}(M;\Z_2) \to H^{i-k+k'}(M;\Z_2) \to H^{i+k'}(M;\Z_2)\]
is the same as multiplying by $x$ and then by $y'$ from
	\[H^{i-k}(M;\Z_2) \to H^{i}(M;\Z_2) \to H^{i+k'}(M;\Z_2).\]
Since the first composition is injective, and since the first map in the second is an isomorphism, we conclude that multiplication by $y'$ is injective from $H^i(M;\Z_2)\to H^{i+k'}(M;\Z_2)$ for all $0<i\leq c-k'$. The proof that multiplication by $y'$ is surjective in all required degrees is similar.
\end{proof}

In \cite{Adem52}, Adem showed that, for a topological space $M$, if $H^*(M;\Z_2)$ is isomorphic to $\Z_2[x]$ or $\Z_2[x]/x^{q+1}$ with $q\geq 2$, then then $k = \deg(x)$ is a power of 2. Observe that such a cohomology ring is $k$-periodic and that $k$ is the minimal period. We now prove the following generalization of Adem's theorem:

\begin{proposition}[$\Z_2$-periodicity theorem]\label{mod2lemma}
Suppose $x\in H^l(M;\Z_2)$ is nonzero and induces periodicity in $H^*(M;\Z_2)$ up to degree $c$ with $2l\leq c$. If $x$ has minimal degree among all such elements, then $l$ is a power of 2.
\end{proposition}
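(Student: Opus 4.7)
The plan is to assume $l$ is not a power of $2$ and derive a contradiction by combining the decomposability of $\mathrm{Sq}^l$ in the mod $2$ Steenrod algebra with \LEM{modplemma1}. Since the indecomposables of the Steenrod algebra are spanned by $\mathrm{Sq}^{2^i}$, we may write $\mathrm{Sq}^l = \sum_i \mathrm{Sq}^{a_i} \mathrm{Sq}^{b_i}$ with each $a_i, b_i \geq 1$ and $a_i + b_i = l$. Applying this to $x$ and using $\mathrm{Sq}^l(x) = x \cup x = x^2$ (nonzero because $2l \leq c$ and $x$ induces periodicity) gives $x^2 = \sum_i \mathrm{Sq}^{a_i}(\mathrm{Sq}^{b_i}(x))$.

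Next, periodicity provides, for each $0 < k < l$, an element $\alpha_k \in H^k(M;\Z_2)$ with $\mathrm{Sq}^k(x) = x \alpha_k$ (using surjectivity of $\cdot x \colon H^k \to H^{l+k}$, valid since $l+k < 2l \leq c$); in particular $\mathrm{Sq}^{b_i}(x) = x y_i$ for some $y_i \in H^{b_i}$. Expanding $\mathrm{Sq}^{a_i}(x y_i)$ via the Cartan formula and substituting gives
\[
\mathrm{Sq}^{a_i}(x y_i) = x \Bigl( \mathrm{Sq}^{a_i}(y_i) + \sum_{k=1}^{a_i} \alpha_k \mathrm{Sq}^{a_i - k}(y_i) \Bigr).
\]
Since the left side lies in $H^{2l} = \Z_2 \cdot x^2$ and multiplication by $x$ is injective on $H^l$ (using $l \leq c - l$), the parenthesized expression equals $c_i x$ for some $c_i \in \Z_2$, and $\sum_i c_i = 1$.

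Now every Cartan term $\alpha_k \mathrm{Sq}^{a_i - k}(y_i)$ with $k \geq 1$ is an honest product of two elements of positive degrees $k$ and $l - k$, both strictly less than $l$, and lies in $H^l = \Z_2 \cdot x$. If any such term equals $x$, the ``in particular'' clause of \LEM{modplemma1} applied to the factorization $x = \alpha_k \cdot \mathrm{Sq}^{a_i - k}(y_i)$ forces $\alpha_k$ to induce periodicity of period $k < l$, contradicting minimality of $l$. Hence every such product vanishes, so $\mathrm{Sq}^{a_i}(y_i) = c_i x$ and $\sum_i \mathrm{Sq}^{a_i}(y_i) = x$ in $H^l$. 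Some $i_0$ then satisfies $\mathrm{Sq}^{a_{i_0}}(y_{i_0}) = x$; the unstable axiom forces $a_{i_0} \leq b_{i_0}$. If $a_{i_0} = b_{i_0}$ (so $l = 2 a_{i_0}$ is even), then $y_{i_0}^2 = \mathrm{Sq}^{a_{i_0}}(y_{i_0}) = x$ is a factorization with $\deg y_{i_0} = l/2 < l$, again contradicting minimality via \LEM{modplemma1}.

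The main obstacle is the residual case $a_{i_0} < b_{i_0}$, where the identity $\mathrm{Sq}^{a_{i_0}}(y_{i_0}) = x$ is a genuine Steenrod square rather than a cup product and yields no direct ring factorization. To complete the proof, I would invoke further Adem relations applied to $y_{i_0}$ (for example, admissible expansions of composites such as $\mathrm{Sq}^{a_{i_0}} \mathrm{Sq}^{a_{i_0}}$), expand via the Cartan formula and periodicity as above, and combine with the Type B vanishings established in the previous step to produce a factorization $x = u v$ with $0 < \deg u, \deg v < l$ — contradicting minimality by \LEM{modplemma1} — or force $x^2 = 0$ directly. Carrying out this bookkeeping uniformly over all $l$ that are not powers of $2$ is the most delicate aspect of the argument.
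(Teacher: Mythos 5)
There is a genuine gap, and you have located it yourself: the case $x=\mathrm{Sq}^{a_{i_0}}(y_{i_0})$ with $0<a_{i_0}<\deg y_{i_0}$. Everything before that point is sound and is essentially the paper's argument (the paper uses one explicit Adem relation $\mathrm{Sq}^l=\sum_{0<i<l}a_i\mathrm{Sq}^i\mathrm{Sq}^{l-i}$ rather than the general decomposability statement, but the evaluation on $x$, the Cartan expansion, the cancellation of $x$, and the elimination of the genuine products via \LEM{modplemma1} are identical). The paper reaches exactly the same dichotomy you do: either $x$ factors nontrivially, or $x=\mathrm{Sq}^i(y)$ for some $i>0$. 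Your proposal resolves only the subcase $i=\deg y$ (where $\mathrm{Sq}^i(y)=y^2$), and the suggestion to ``invoke further Adem relations'' such as admissible expansions of $\mathrm{Sq}^{a_{i_0}}\mathrm{Sq}^{a_{i_0}}$ does not obviously close the loop: iterating the same computation on $y_{i_0}$ produces new elements in new degrees with no evident termination, and this is precisely the bookkeeping you concede you cannot carry out.

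The missing idea is a maximality argument, which the paper isolates as \LEM{mod2lemma2}. Among all expressions $x=\mathrm{Sq}^i(y)$ with $i>0$, choose $i$ maximal (the set of such $i$ is nonempty by hypothesis and bounded above by the unstable axiom, since $\mathrm{Sq}^i(y)\neq 0$ forces $i\leq\deg y<l$). Squaring and using the Cartan formula in characteristic $2$ gives $x^2=\mathrm{Sq}^{2i}(y^2)$, because the off-diagonal terms $\mathrm{Sq}^j(y)\mathrm{Sq}^{2i-j}(y)$ with $j\neq i$ pair up and cancel. If $i=\deg y$ then $x=y^2$ and you are done; otherwise $\deg(y^2)>l$, so periodicity lets you write $y^2=xy'$ with $0<\deg y'<\deg y$. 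Expanding $x^2=\mathrm{Sq}^{2i}(xy')=\sum_j x\,x_j\,\mathrm{Sq}^{2i-j}(y')$ and cancelling $x$ shows that $x=x_j\,\mathrm{Sq}^{2i-j}(y')$ for some $j$. The term $j=0$ would give $x=\mathrm{Sq}^{2i}(y')$, contradicting the maximality of $i$ since $2i>i$; hence $j>0$ and $x$ factors as a product of two elements of positive degree less than $l$, contradicting \LEM{modplemma1} and the minimality of $l$. With this lemma in hand, your argument closes.
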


The key tool in the proof is the existence of Steenrod squares, so we review some of their properties now. The Steenrod squares are group homomorphisms
	\[Sq^i : H^*(M;\Z_2) \to H^*(M;\Z_2)\]
which exist for all $i\geq 0$ and satisfy the following properties:
	\begin{enumerate}
	 \item If $x\in H^j(M;\Z_2)$, then $Sq^i(x) \in H^{i+j}(M;\Z_2)$, and
	 	\begin{itemize}
	 	 \item if $i=0$, then $Sq^i(x) = x$,
	 	 \item if $i=j$, then $Sq^i(x) = x^2$, and
	 	 \item if $i>j$, then $Sq^i(x) = 0$.
	 	\end{itemize}
	 \item (Cartan formula) If $x,y\in H^*(M;\Z_2)$, then $Sq^i(xy) = \sum_{0\leq j\leq i} Sq^j(x)Sq^{i-j}(y)$.
	 \item (Adem relations) For $a<2b$, one has the following relation among compositions of Steenrod squares: 
	 	\[Sq^a Sq^b = \sum_{j=0}^{\floor{a/2}} \binom{b-1-j}{a-2j} Sq^{a+b-j} Sq^j.\]
	\end{enumerate}
A consequence of the Adem relations is the following: If $l$ is not a power of two, there exists a relation of the form $Sq^l = \sum_{0<i<l} a_i Sq^i Sq^{l-i}$ for some constants $a_i$. Indeed, if $l = 2^c + d$ for integers $c$ and $d\equiv 0 \bmod{2^{c+1}}$, then we can use the Adem relation with $(a,b) = (2^c, d)$.

The first application of the Steenrod squares in the presence of periodicity is to show the following:
\begin{lemma}\label{mod2lemma2}
Suppose $x\in H^l(M;\Z_2)$ is nonzero and induces periodicity up to degree $c$ with $2l\leq c$. If $x = Sq^i(y)$ for some $i>0$, then $x$ factors as a product of elements of degree less than $l$.
\end{lemma}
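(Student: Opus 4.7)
The plan is to split into cases based on the comparison of $i$ with $\deg y = l - i$. First I would record two basic constraints forced by $x = Sq^i(y) \neq 0$: the degree $\deg y = l - i$ must be positive (otherwise $y \in H^0(M;\Z_2)$ and $Sq^i$ annihilates it for $i > 0$), and $i \leq l - i$ (otherwise $Sq^i$ kills $y$ for degree reasons, since $Sq^i$ vanishes on classes of degree less than $i$). Together these give $0 < i \leq l/2$.

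In the boundary case $i = l/2$, I would invoke the identity $Sq^{\deg z}(z) = z^2$ to conclude that $x = Sq^{l/2}(y) = y \cdot y$, an explicit factorization into two classes of degree $l/2 < l$.

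For the main case $0 < i < l/2$, I would exploit periodicity. Since $\deg(y^2) = 2(l-i) = l + (l - 2i)$ with $0 < l - 2i < l$ and $2l \leq c$, periodicity provides a class $z \in H^{l-2i}(M;\Z_2)$ with $y^2 = xz$. I would then expand $Sq^j(y^2)$ for a suitable $j \leq i$ via the Cartan formula in two ways. Starting from $y^2 = y \cdot y$, the sum $\sum_k Sq^k(y)\,Sq^{j-k}(y)$ collapses by pairwise cancellation of symmetric terms mod $2$, leaving either $0$ or a single square $(Sq^{j/2}(y))^2$. Starting from $y^2 = xz$, each summand $Sq^k(x) \cdot Sq^{j-k}(z)$ with $k > 0$ lies in $H^{l+k}(M;\Z_2)$, which by periodicity equals $x \cdot H^k(M;\Z_2)$, so the whole expansion becomes $x$ times a single class of appropriate degree. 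Comparing the two sides and using injectivity of multiplication by $x$ at that degree yields a relation of the form $\alpha^2 = x \cdot \beta$ with $\alpha$ and $\beta$ in degrees strictly less than $l$.

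The principal obstacle is the final step of the main case: converting the relation $\alpha^2 = x \beta$, together with the input $y^2 = xz$, into an honest factorization $x = uv$ with $0 < \deg u, \deg v < l$. The likely route is to iterate the Cartan--periodicity circle of ideas on the auxiliary classes $\alpha$, $\beta$, $y$, $z$, exploiting the tight control that periodicity imposes on the ring structure of $H^{\leq 2l-i}(M;\Z_2)$ to eliminate the quadratic term on the left and express $x$ itself as a product of two classes, each in positive degree less than $l$.
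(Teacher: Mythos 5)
Your setup is sound and matches the paper's up to the point where you stop: the constraint $0<i\leq l/2$, the boundary case $x=y^2$ when $i=\deg(y)$, the periodicity relation $y^2=xz$ with $0<\deg(z)=l-2i<\deg(y)$, and the factorizations $Sq^k(x)=xx_k$ are all exactly the right ingredients. But the step you flag as "the principal obstacle" is a genuine gap, and the vague plan to "iterate the Cartan--periodicity circle of ideas" does not close it. Two specific choices are missing. First, you should not apply $Sq^j$ for "a suitable $j\leq i$"; the right operation is $Sq^{2i}$, because the Cartan expansion of $Sq^{2i}(y\cdot y)$ collapses mod $2$ to exactly $(Sq^i(y))^2=x^2$, so the comparison of the two expansions reads $x^2=\sum_{k\leq 2i}xx_kSq^{2i-k}(z)$. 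Cancelling one $x$ by injectivity and using that periodicity forces $H^l(M;\Z_2)\cong\Z_2$ generated by $x$, some single summand must equal $x$: that is, $x=x_kSq^{2i-k}(z)$ for some $k$. If $k>0$ this is already the desired factorization into classes of degrees $k$ and $l-k$, both positive and less than $l$. Your formulation "$\alpha^2=x\beta$" obscures this: with $j=2i$ the left side is literally $x^2$, so there is no quadratic term left to "eliminate."

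Second, the residual case $k=0$ gives $x=Sq^{2i}(z)$, which is not a factorization, and nothing in your setup rules it out. The paper handles this by choosing $i$ \emph{maximal} among all $i>0$ with $x\in\mathrm{im}(Sq^i)$ at the very start of the proof (the set of such $i$ is nonempty by hypothesis and bounded by $l/2$, so a maximum exists); then $x=Sq^{2i}(z)$ with $2i>i$ contradicts maximality. Without that initial normalization your argument can only recurse ($x=Sq^{2i}(z)$ with $\deg(z)<\deg(y)$, repeat), and while one could in principle turn that into a descent on $\deg(y)$, you have not done so; the clean fix is the maximality assumption. So: right skeleton, but the two decisive moves --- take $i$ maximal, and apply $Sq^{2i}$ so the comparison lands on $x^2$ and one-dimensionality of $H^l$ plus maximality finish the argument --- are absent.
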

Combined with \LEM{modplemma1}, we conclude that if $i>0$ and if $x=Sq^i(y)\in H^l(M;\Z_2)$ is nonzero and induces periodicity up to degree $c$ with $2l\leq c$, then there is another nonzero element $x'$ inducing periodicity up to degree $c$ with $0<\deg(x') < l$.
\begin{proof}
Let $i>0$ be maximal such that $x = Sq^i(y)$ for some cohomology element $y$. Using the Cartan relation, we compute $x^2$ as follows:
	\[x^2 = Sq^i(y)^2 = Sq^{2i}(y^2) - \sum_{j\neq i}Sq^j(y)Sq^{2i-j}(y).\]
Now $Sq^j(y)$ and $Sq^{2i-j}(y)$ commute, so the sum over $j\neq i$ is twice the sum over $j<i$. Hence $x^2 = Sq^{2i}(y^2)$.

Next, $Sq^i(y)=x\neq 0$ implies $i\leq \deg(y)$. Moreover, $i = \deg(y)$ implies that $x$ factors as $y^2$. Suppose then that $i < \deg(y)$. Since $l = i + \deg(y)<\deg(y^2)$, it follows from the surjectivity assumption of periodicity that $y^2 = xy'$ for some $y'$ with $0<\deg(y')<\deg(y)$. Using periodicity again, observe that $Sq^j(x)$ for $0\leq j<l$ can be factored as $xx_j$ for some $x_j \in H^j(M;\Z_2)$. Applying the Cartan formula again, we have
	\[x^2 = Sq^{2i}(xy') = \sum_{j\leq 2i} xx_jSq^{2i-j}(y').\]
The injectivity assumption of periodicity implies we may cancel an $x$ and conclude $x=x_jSq^{2i-j}(y')$ for some $j\leq 2i$. Because $i$ was chosen to be maximal, we must have $j>0$, that is, we must have that $x$ factors as a product of elements of degree less than $l$.
\end{proof}

We proceed to the proof of \PROP{mod2lemma}. Suppose $x\in H^*(M;\Z_2)$ is nonzero, induces periodicity up to degree $c$ with $2l\leq c$, and has minimal degree among all such elements. Assume $l$ is not a power of 2. We will show that $x$ factors nontrivially or that $x=Sq^i(y)$ for some $i>0$, which contradicts \LEMS{modplemma1}{mod2lemma2}.

The first step is to evaluate the Adem relation $Sq^l = \sum_{0<i<l} a_i Sq^i Sq^{l-i}$ on $x$. Using the factorization $Sq^j(x) = xx_j$ as above, together with the Cartan formula, we obtain
 	\[x^2 = Sq^l(x) = \sum_{0<i<l} a_i Sq^i(xx_{l-i}) = \sum_{0<i<l} a_i \sum_{0\leq j\leq i} xx_jSq^{i-j}(x_{l-i}).\]
Using the injectivity assumption of periodicity, we can cancel an $x$ to conclude that
 	\[x = \sum_{0<i<l} \sum_{0\leq j \leq i} a_ix_jSq^{i-j}(x_{l-i}).\]
Now periodicity and our assumption that $x$ is nonzero imply that $H^l(M;\Z_2)\Z_2$ and is generated by $x$. It follows that $x = x_jSq^{i-j}(x_{l-i})$ for some $0<i<l$ and $0\leq j\leq i$. If $j>0$, we have proven a nontrivial factorization of $x$, and if $j=0$, we have proven that $x = Sq^i(x_{l-i})$ for some $i>0$. As explained at the beginning of the proof, this is a contradiction. 

\bigskip
\section{Periodicity with coefficients in $\Z_p$}\label{SECmodpperiodicity}
\bigskip

In this section, we prove the $\Z_p$-analogue of \PROP{mod2lemma}:
 
\begin{proposition}[$\Z_p$-periodicity theorem]\label{modplemma}
Let $p$ be an odd prime. Suppose $x\in H^l(M;\Z_p)$ is nonzero and induces periodicity in $H^*(M;\Z_p)$ up to degree $c$ with $pl\leq c$. If $x$ has minimal degree among all such elements, then $l=2\lambda p^r$ for some $r\geq 0$ and $\lambda\mid p-1$.
\end{proposition}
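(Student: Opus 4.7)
The plan is to mirror the proof of \PROP{mod2lemma}, using the mod-$p$ Steenrod powers $P^i\colon H^j(M;\Z_p)\to H^{j+2i(p-1)}(M;\Z_p)$ (and the Bockstein $\beta$) in place of Steenrod squares. The relevant properties include $P^0=\id$, $P^i(x)=x^p$ when $2i=\deg(x)$, $P^i(x)=0$ for $2i>\deg(x)$, the Cartan formula, and the Adem relations, which for $a<pb$ read
\[
P^aP^b \;=\; \sum_{t=0}^{\lfloor a/p\rfloor}(-1)^{a+t}\binom{(p-1)(b-t)-1}{a-pt}\,P^{a+b-t}P^t.
\]

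A preliminary reduction is that $l$ must be even: graded commutativity gives $x^2=(-1)^{l^2}x^2$, so $l$ odd would force $2x^2=0$ and hence $x^2=0$ (since $p$ is odd), contradicting periodicity since $2l\le c$ (which is implied by $pl\le c$). Write $l=2m$, so that $x^p=P^m(x)$. Next, as the $\Z_p$-analog of \LEM{mod2lemma2}, I would show that if $x=P^i(y)$ with $i>0$, then $x$ factors as a product of elements of strictly smaller degree. Choose $i$ maximal; the Cartan expansion of $P^i(y)^p$ collapses to $P^{ip}(y^p)$ because the $p$-fold mixed terms occur in cyclic orbits of size $p$ under permutation and hence vanish mod $p$ (here one uses that $y$ has even degree, which is automatic since $l$ and $2i(p-1)$ are both even). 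Periodicity then provides $y^p=x\cdot y'$ with $\deg(y')<\deg(y)$ and $P^j(x)=x\cdot x_j$, and applying Cartan to $P^{ip}(x y')$ and cancelling a factor of $x$ by injectivity yields the desired factorization, which maximality of $i$ forces to be nontrivial.

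The main step is to contradict the hypothesis that $m$ is not of the form $\lambda p^r$ with $\lambda\mid p-1$. The strategy is to select an Adem-type decomposition $P^m=\sum c_{ab}P^aP^b+(\text{terms involving }\beta)$ in which the leading coefficient is a unit modulo $p$; evaluate on $x$; use periodicity to write each $P^b(x)=x\cdot y_b$ and each $P^j(x)=x\cdot x_j$; expand via the Cartan formula; and cancel one factor of $x$ by injectivity. The result is an equation in $H^{(p-1)l}(M;\Z_p)=\Z_p\cdot x^{p-1}$. Iterating the cancellation---exploiting that individual Cartan summands of sufficiently large individual-factor degree can themselves be further factored through $x$ by periodicity---should reduce the equation to one living in $H^l(M;\Z_p)=\Z_p\cdot x$, and then force either a nontrivial multiplicative factorization of $x$ or an expression of $x$ as a Steenrod image, contradicting the minimality of $l$ via \LEM{modplemma1} and the preceding lemma.

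The chief obstacle is the Adem-relation arithmetic. When $p\nmid m$ and $m\nmid p-1$, the choice $(a,b)=(1,m-1)$ already suffices: the coefficient of $P^m$ in the Adem expansion of $P^1P^{m-1}$ is $-\binom{(p-1)(m-1)-1}{1}\equiv m\pmod p$, a unit. For the remaining cases ($r\ge 1$, or $\lambda\nmid p-1$) one must choose $(a,b)$ more carefully so that the binomial $(-1)^a\binom{(p-1)b-1}{a}$ is a unit mod $p$---this is a Lucas-theorem calculation directly analogous to the $\binom{d-1}{2^c}$ computation in the $\Z_2$ case---while simultaneously ensuring that the resulting equation in $H^{(p-1)l}$ genuinely cascades down to a factorization of $x$. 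The refined condition $\lambda\mid p-1$, rather than the weaker $\lambda=1$ that would come from mere indecomposability of $P^m$ in the Steenrod algebra, is the subtle content of the theorem, and I expect its verification to hinge on a careful combinatorial analysis of the $p$-adic digits of $m$, possibly in conjunction with the mixed $P^a\beta P^b$ Adem relations to handle residual Bockstein-related cases.
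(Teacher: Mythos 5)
Your overall architecture matches the paper's (Steenrod powers, Cartan formula, Adem relations, repeated use of \LEM{modplemma1} to force factors to be powers of $x$), and the reduction to $l$ even and the coefficient computation for $(a,b)=(1,m-1)$ are both correct. But the two steps that carry essentially all the difficulty are either mis-sketched or explicitly deferred, so as it stands the proposal has genuine gaps. First, the analogue of \LEM{mod2lemma2}: your plan is to take $p$-th powers, collapse the Cartan expansion to $P^{ip}(y^p)=x^p$, and write $y^p=xy'$ with $\deg(y')<\deg(y)$. That last inequality is false in general: $\deg(y')=p\deg(y)-l$, and since $l=\deg(y)+2i(p-1)$ with $2i\leq\deg(y)$, one only gets $\deg(y')=(p-1)(\deg(y)-2i)$, which can greatly exceed $\deg(y)$ when $2i$ is small relative to $\deg(y)$. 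Without a degree drop there is no descent and no contradiction with maximality of $i$. The paper instead takes the $r$-th power with $r<p$ chosen so that $l\leq\deg(y^r)<l+\deg(y)$ (forcing $\deg(y_1)<\deg(y)$), kills the mixed Cartan terms not by a cyclic-orbit count but by \LEM{modplemma1}, and then must iterate: it produces an infinite sequence $y_1,y_2,\dots$ with $0<\deg(y_j)<d$ and $l+\deg(y_{j+1})=\deg(y_j)+m_jd$, and derives the contradiction from the resulting divisibility of $\deg(y_d)$ by $d$. None of this is replaced by anything in your sketch.

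Second, and more seriously, the step that produces the precise conclusion $l=2\lambda p^r$ with $\lambda\mid p-1$ (rather than the much weaker statement that $P^{l/2}$ is decomposable, which only detects $\lambda=1$) is the part you explicitly leave open, and it is not a routine Lucas-type computation. Your test case $(a,b)=(1,m-1)$ with $p\nmid m$, $m\nmid p-1$ already illustrates the problem: after cancelling $x$ one gets $x^{p-1}=u(x_1w+P^1(w))$ with $\deg(x_1)=2(p-1)$, and when $m<p-1$ neither \LEM{modplemma1} nor the small-$i$ exclusion (the paper's \LEM{modplemma3}, which requires $i<l/2(p-1)$) applies to the surviving term $P^1(w)$; one is left with a relation $x^r=P^j(z)$ that must be processed further. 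The paper handles this with a dedicated structural lemma (\LEM{modplemma4}: $P^k=P^{mp^a}Q_a+\sum_{i<a}P^{p^i}Q_i$ for \emph{every} $1\leq m\leq\lambda$, with unit leading coefficient verified by Lucas), its evaluation on $x$ (\LEM{modplemma5}), and a three-case endgame in which $m$ is chosen so that $m(p-1)\equiv-1\bmod{\lambda}$ in one case and the relation $x^r=P^{p^a}(z)$ is raised to the $(\lambda/g)$-th power in another, with $g=\gcd(\lambda,p-1)$. This is where $\lambda\mid p-1$ actually enters, and no Bockstein operations are needed (contrary to your suggestion), since $l$ is even throughout. Until these two components are supplied, the proof is incomplete.
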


The proof uses Steenrod powers. These are group homomorphisms
	\[P^i : H^*(M;\Z_p) \to H^*(M;\Z_p)\]
for $i\geq 0$ that satisfy the following properties:
	\begin{enumerate}
	 \item If $x\in H^j(M;\Z_p)$, then $P^i(x) \in H^{j+2i(p-1)}(M;\Z_p)$, and 
	 	\begin{itemize}
	 	 \item if $i=0$, then $P^i(x) = x$,
	 	 \item if $2i=j$, then $P^i(x) = x^p$, and
	 	 \item if $2i>j$, then $P^i(x) = 0$.
	 	\end{itemize}
	 \item (Cartan formula) For $x,y\in H^*(M;\Z_p)$, $P^i(xy) = \sum_{0\leq j\leq i} P^j(x)P^{i-j}(y)$.
	 \item (Adem relations) For $a<pb$,
	 	\begin{equation}\label{SteenrodPowersAxiom3}
	 	 P^a P^b = \sum_{j\leq a/p} (-1)^{a+j}\binom{(p-1)(b-j)-1}{a-pj}P^{a+b-j}P^j.
	 	\end{equation}
	\end{enumerate}

Despite the similarity of the statements of \PROPS{mod2lemma}{modplemma}, the proof in the odd prime case is more involved. We proceed with a sequence of steps. 

We first study the structure of the Adem relations to obtain a specific relation in the $\Z_p$-algebra $\mathcal{A}$ generated by $\{P^i\}_{i\geq 0}$ modulo the Adem relations. This lemma does not use periodicity.

\begin{lemma}\label{modplemma4}
Let $k = \lambda p^a + \mu$ where $0<\lambda < p$ and $\mu\equiv 0\bmod{p^{a+1}}$. For all $1\leq m\leq \lambda$, there exist $Q_i\in\mathcal{A}$ such that $P^k = P^{mp^a}\circ Q_a + \sum_{i < a} P^{p^i}\circ Q_i.$
\end{lemma}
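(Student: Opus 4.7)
The plan is to induct on $k$ and produce the decomposition by solving a single Adem relation, using Lucas's theorem to verify that the leading coefficient is a unit in $\Z_p$.

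Fix $(k,m)$ satisfying the hypotheses. If $m = \lambda$ and $\mu = 0$, then $k = mp^a$ and the trivial decomposition $P^k = P^{mp^a} \circ P^0$ works. Otherwise set $B = k - mp^a = (\lambda - m)p^a + \mu$; then $B > 0$, and $m < p$ forces $pB \geq p^{a+1} > mp^a$, so the Adem relation \eqref{SteenrodPowersAxiom3} applies and yields
\[
    P^{mp^a} P^B \;=\; \sum_{0 \leq j \leq mp^{a-1}} (-1)^{mp^a + j} \binom{(p-1)(B - j) - 1}{mp^a - pj}\, P^{k - j} P^j.
\]

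The first main step is to show that the $j = 0$ coefficient $\binom{(p-1)B - 1}{mp^a}$ is a unit modulo $p$. The base-$p$ expansion of $mp^a$ has digit $m$ in position $a$ and $0$ elsewhere, so by Lucas's theorem it suffices to check that the $a$-th base-$p$ digit of $(p-1)B - 1$ lies in $[m, p-1]$. When $m < \lambda$, $(p-1)B$ has digit $p - (\lambda - m)$ at position $a$ and zeros below, so subtracting $1$ borrows through positions $0, \ldots, a-1$ and leaves the $a$-th digit equal to $p - 1 - \lambda + m \geq m$. When $m = \lambda$ and $\mu > 0$, $(p-1)B$ vanishes in positions $0, \ldots, a$, so subtracting $1$ leaves the $a$-th digit equal to $p - 1 \geq m$. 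In both cases the binomial is a product of ordinary binomials in $[0,p-1]$ with top at least bottom, hence nonzero mod $p$. Solving for $P^k$ gives
\[
    P^k \;=\; c \cdot P^{mp^a} P^B \;+\; \sum_{j = 1}^{mp^{a-1}} c_j\, P^{k - j} P^j,
\]
whose first term is of the desired form $P^{mp^a} \circ Q_a$.

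The second step handles the remaining terms $P^{k-j} P^j$ for $1 \leq j \leq mp^{a-1}$ (which only arise when $a \geq 1$) by strong induction on $k$. Since $j < p^a$ and the base-$p$ digits of $k$ vanish in positions $0, \ldots, a - 1$, the identity $k - j = (\lambda - 1)p^a + (p^a - j) + \mu$ shows that $k - j$ has a nonzero base-$p$ digit at some position $a' \leq a - 1$. Writing $k - j = \lambda' p^{a'} + \mu'$ in the standard form and applying the inductive hypothesis with $m' = 1$ expresses $P^{k-j}$ as a left-$\mathcal{A}$-linear combination of $\{P^{p^i} : 0 \leq i \leq a'\} \subseteq \{P^{p^i} : 0 \leq i < a\}$. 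Right multiplication by $P^j$ then places $P^{k-j}P^j$ in the left ideal generated by $\{P^{p^i} : i < a\}$, completing the induction. The main obstacle is the Lucas computation above, which requires a case split based on whether $\mu > 0$; once this is handled, the rest of the argument is essentially bookkeeping.
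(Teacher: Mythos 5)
Your proof is correct and follows essentially the same route as the paper's: induct on $k$, apply the Adem relation with $(a,b) = (mp^a,\,k-mp^a)$, use Lucas's theorem to show the $j=0$ coefficient is a unit mod $p$, and absorb the terms $P^{k-j}P^j$ with $j\geq 1$ via the inductive hypothesis, since $k-j<k$ is not divisible by $p^a$. Your Lucas computation is in fact spelled out slightly more carefully than the paper's (which compresses the $m=\lambda$, $\mu>0$ case into one chain of congruences), but the argument is the same.
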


\begin{proof}
We induct over $k$. For $k=1$, the result is trivial. Suppose the result holds for all $k'<k$. Write $k = \lambda p^a + \mu$ where $0<\lambda < p$ and $\mu\equiv 0\bmod{p^{a+1}}$, and let $1\leq m\leq \lambda$. If $\mu=0$ and $m=\lambda$, then $P^k = P^{mp^a}$ is already of the desired form. If not, then $mp^a < p(k - mp^a)$ and we have the Adem relation (see Equation \ref{SteenrodPowersAxiom3})
  \[c_0 P^k = P^{mp^a} P^{k-mp^a} - \sum_{0<j\leq mp^{a-1}} c_j P^{k-j} P^j.\]
For $0<j\leq mp^{a-1}$, $k-j$ is less than $k$ and not congruent to 0 modulo $p^a$. Hence the induction hypothesis implies that each $P^{k-j}$ term is of the form $\sum_{i<a} P^{p^i}Q_i$. It therefore suffices to prove that $c_0 \not\equiv 0\bmod{p}$. 

For this, we use the following elementary fact: If $x = \sum_{i\geq 0} x_i p^i$ and $y=\sum_{i\geq 0} y_i p^i$ are base $p$ expansions (where $p$ is prime), then the modulo $p$ binomial coefficients satisfy $\binom{x}{y} \equiv \prod \binom{x_i}{y_i} \bmod{p}$. Hence we have
  \[(-1)^{m}c_0 = 	\binom{(p-1)(k-mp^a)-1}{mp^a}
		  \equiv\binom{(p-1)(\lambda-m)p^a-1}{mp^a}
		  \equiv\binom{p-(\lambda-m)-1}{m},
  \]
which is not congruent to 0 modulo $p$ since $0\leq m \leq p-(\lambda-m)-1 < p$. This completes the proof.
\end{proof}

To simplify the remainder of the proof, we assume throughout the rest of the section that $x\in H^l(M;\Z_p)$ is nonzero, induces periodicity up to degree $c$ with $pl \leq c$, and has minimal degree among all such elements. In particular, \LEM{modplemma1} implies that the only factorizations of $ax^r$ with $a\neq 0$ and $r\leq p$ are of the form $(a'x^{r'})(a'' x^{r''})$ with $a',a''\in \Z_p$ and $r = r' + r''$. The next step is to prove an analogue of \LEM{mod2lemma2}:

\begin{lemma}\label{modplemma2}
No nontrivial multiple of $x$ is of the form $P^i(y)$ with $i>0$.
\end{lemma}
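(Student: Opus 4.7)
The plan is to adapt the argument of Lemma \ref{mod2lemma2} to the odd prime case. Assume for contradiction that some nonzero $\Z_p$-multiple of $x$ is of the form $P^i(y)$ with $i>0$, and after absorbing the scalar write $x=P^i(y)$ with $i$ chosen maximal. A parity check shows $\deg(y)$ must be even: otherwise $\deg(x)=l$ is odd, giving $x^2=0$ and hence $x^p=0$, which contradicts the injectivity of multiplication by $x$ on $H^{(p-1)l}$ (available because $pl\leq c$). The vanishing axiom then forces $\deg(y)\geq 2i$, and the boundary case $\deg(y)=2i$ gives $x=y^p=y\cdot y^{p-1}$; since $\deg(y)=2i$ is not a multiple of $l=2ip$, Lemma \ref{modplemma1} and its proof produce a periodicity-inducing element of degree strictly between $0$ and $l$, contradicting minimality.

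So we may assume $\deg(y)>2i$. The odd-prime analogue of the mod-$2$ identity $Sq^{2i}(y^2)=Sq^i(y)^2$ is
\[
x^p \;=\; (P^i y)^p \;=\; P^{pi}(y^p),
\]
which follows from the Cartan formula: since $\deg(y)$ is even the factors $P^{j_k}(y)$ commute, and every tuple $(j_1,\ldots,j_p)$ of nonnegative integers summing to $pi$ other than the constant tuple $(i,\ldots,i)$ contributes a multinomial coefficient $p!/\prod n_k!$ divisible by $p$. The degree bounds $l<\deg(y^p)<pl\leq c$ (from $\deg(y)>2i$ and $\deg(y)<l$) let us use surjectivity of periodicity to write $y^p=xy'$ with $\deg(y')=(p-1)(\deg(y)-2i)>0$; and surjectivity also gives $P^j(x)=x\cdot x_j$ for each $0\leq j\leq pi$ (valid since $l+2j(p-1)<pl\leq c$, using $l>2pi$), with the convention $x_0=1$. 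Expanding $P^{pi}(xy')$ by Cartan and canceling one copy of $x$ by injectivity on $H^{(p-1)l}$ yields
\[
x^{p-1} \;=\; \sum_{j=0}^{pi} x_j\, P^{pi-j}(y') \qquad\text{in } H^{(p-1)l}(M;\Z_p),
\]
which is one-dimensional over $\Z_p$ (because $(p-1)l<c$). Hence each summand equals $c_j x^{p-1}$ with $\sum c_j=1$, and choosing any $j_0$ with $c_{j_0}\neq 0$ yields the factorization $x^{p-1} = c_{j_0}^{-1}\,x_{j_0}\cdot P^{pi-j_0}(y')$.

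If $2j_0(p-1)$ is not divisible by $l$, the proof of Lemma \ref{modplemma1} produces a periodicity-inducing element of positive degree less than $l$, contradicting minimality. Otherwise $2j_0(p-1)=sl$ with $0\leq s\leq p-2$ (the bound comes from $l>2pi$ and $j_0\leq pi$), and the one-dimensionality of $H^{sl}$ and $H^{(p-1-s)l}$ forces $x_{j_0}=\alpha x^s$ and $P^{pi-j_0}(y')=\delta x^{p-1-s}$ with $\alpha,\delta\in\Z_p^*$; in particular $P^{j_0}(x)=\alpha x^{s+1}$. The main obstacle is to turn this residual configuration into a contradiction with the maximality of $i$: from the identity $P^{pi-j_0}(y')=\delta x^t$ (with $t=p-1-s\geq 1$) one must exhibit some nonzero multiple of $x$ itself as $P^{i'}(z)$ with $i'>i$. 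My plan is, among all triples $(i,y,j_0)$ realizing this configuration, to choose one that maximizes $s$ (equivalently minimizes $t$), and then to use the Adem-relation decomposition from Lemma \ref{modplemma4} to rewrite $P^{pi-j_0}$ as a composition whose outermost Steenrod exponent strictly exceeds $i$. This final conversion step --- extracting a Steenrod image of $x$ itself from a factorization of a higher power of $x$ --- has no direct analogue in the mod-$2$ argument and is the principal new subtlety.
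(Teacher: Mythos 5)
Your first half is sound and closely parallels the paper's opening moves: reducing to $x=P^i(y)$, expanding a power of $x$ via the Cartan formula, writing the relevant power of $y$ as $x y'$ and $P^j(x)=xx_j$ by surjectivity, cancelling an $x$ by injectivity, and using one-dimensionality of $H^{(p-1)l}$ together with Lemma \ref{modplemma1} to force every surviving factor to be a power of $x$. But the proof is not finished, and the step you defer is exactly where the odd-prime case genuinely differs from the mod-$2$ case. What you end up with is a relation $x^t=\delta^{-1}P^{pi-j_0}(y')$ for some $t\geq 1$ and some class $y'$ of positive degree. This does not contradict the maximality of $i$ (which concerns $x$ itself, not its powers, and in any case $pi-j_0$ need not exceed $i$), and your proposed repair --- maximizing $s$ and invoking Lemma \ref{modplemma4} to force a large outermost exponent --- cannot work as stated: the decomposition in Lemma \ref{modplemma4} has outermost exponents dictated by the base-$p$ expansion of $pi-j_0$, which bears no relation to $i$.

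The paper closes the argument by a different mechanism: a descent on $\deg(y_j)$ modulo $d=\deg(y)$. Crucially, instead of raising to the $p$-th power, it takes the minimal $r$ with $\deg(y^r)\geq l$, so that $y^r=xy_1$ with $0<\deg(y_1)<d$; then, given $x^{r_j}=P^{i_j}(y_j)$ with $0<\deg(y_j)<d$, it multiplies by a suitable power $y^{m_j}$ so that $y_jy^{m_j}=xy_{j+1}$ again has $0<\deg(y_{j+1})<d$, and repeats the Cartan/cancellation argument to obtain $x^{r_{j+1}}=P^{i_{j+1}}(y_{j+1})$. The bookkeeping identity $l+\deg(y_{j+1})=\deg(y_j)+m_jd$ then telescopes: after $d$ steps one finds $d\mid\deg(y_d)$, contradicting $0<\deg(y_d)<d$. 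Your version loses exactly the control that makes this work: your $y'$ has degree $(p-1)(\deg(y)-2i)$, which need not be less than $\deg(y)$, and you have no iteration whose degrees are forced into the window $(0,d)$. To complete the proof you would need to replace the final ``plan'' with some such terminating descent.
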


\begin{proof}
 Without loss of generality, we may assume $x$ itself is equal to $P^i(y)$ for some $i>0$. Set $d=\deg(y)$. Our first task is to write some power of $x$ as $P^{i_1}(y_1)$ with $0<\deg(y_1)<d$. Because $x\neq 0$, we have $2i\leq d$, which implies $\deg(y^p) \geq l$. Let $r$ be the integer such that $l+d>\deg(y^r)\geq l$. Lemma \ref{modplemma1} implies a strict inequality here. Using periodicity, write $y^r = xy_1$ with $0<d_1<d$.

 We next calculate the $r$-th power of both sides of the equation $x=P^i(y)$ using the Cartan relation:
  \[x^r = P^{ri}(y^r) - \sum c_{j_1,\ldots,j_r} P^{j_1}(y)\cdots P^{j_r}(y)\]
 where the $c_{j_1,\ldots,j_r}$ are constants and the sum runs over $j_1\geq\cdots\geq j_r$ with $j_1+\ldots+j_r=ri$ and $(j_1,\ldots,j_r)\neq(i,\ldots,i)$. Observe that $j_1>i$, so $P^{j_1}(y) = xz_{j_1}$ for some $z_{j_1}\in H^{2(j_1-i)(p-1)}(M;\Z_p)$. Using $y^r=xy_1$, the first term on the right-hand side becomes
  \[P^{ri}(y^r) = P^{ri}(xy_1) = \sum_{k'+k = ri} P^{k'}(x)P^{k}(y_1) = x\sum_{k'+k=ri}x_{k'}P^{k}(y_1)\]
 for some $x_{k'} \in H^{2k'(p-1)}(M;\Z_p)$. Combining these calculations, and using periodicity to cancel the $x$, we obtain
  \[x^{r-1} = \sum_{k'+k = ri} x_{k'} P^{k}(y_1) + \sum c_{j_1,\ldots,j_r} z_{j_1}P^{j_2}(y)\cdots P^{j_r}(y).\]
 Now $r-1 < p$, so periodicity implies that $x^{r-1}$ generates $H^{(r-1)l}(M;\Z_p)$. By Lemma \ref{modplemma1} therefore, every term of the form $z_{j_1}P^{j_2}(y)\cdots P^{j_r}(y)$ vanishes since $0<\deg(P^{j_r}(y)) < \deg(P^i(y)) = l$. Similarly, all terms of the form $x_{k'} P^{k}(y_1)$ vanish unless $P^{k}(y_1)$ is a power of $x$. Hence some power of $x$ is of the form $P^{i_1}(y_1)$, as claimed.

We now show that, given an expression $x^{r_j} = P^{i_j}(y_j)$ for some $j\geq 1$ with $0<\deg(y_j)< d$, there exists another expression $x^{r_{j+1}} = P^{i_{j+1}}(y_{j+1})$ with $0<\deg(y_{j+1}) < d$. Moreover, it will be apparent that $l + \deg(y_{j+1}) = \deg(y_j) + m_j d$ for some integer $m_j$. First, among all such expressions $x^{r_j} = P^{i_j}(y_j)$, fix $y_j$ and take $r_j$ (or, equivalently, $i_j$) to be minimal. Next, note that $P^{i_j}(y_j)=x^{r_j}\neq 0$ implies $p\deg(y_j) \geq r_jl$, which together with $pd \geq l$ implies
    \[p\deg(y_j y^{p-r_j}) = p\deg(y_j) + (p-r_j)pd \geq pl.\]
 Hence we can choose an integer $m_j\leq p-r_j$ satisfying $l \leq \deg(y_j) + m_jd < l + d$. Once again, Lemma \ref{modplemma1} implies both inequalities are strict. Using periodicity, we can write $y_j y^{m_j} = xy_{j+1}$ with $0<\deg(y_{j+1})<d$ and $l + \deg(y_{j+1}) = \deg(y_j) + m_jd$. We now calculate
  \[x^{r_j + m_j} = P^{i_j}(y_j) P^i(y)^{m_j} = P^{i_j+m_ji}(y_jy^{m_j}) - \sum P^{k_0}(y_j)P^{k_1}(y)\cdots P^{k_{m_j}}(y)\]
 where the sum runs over $(k_0,\ldots,k_{m_j})\neq (i_j,i,\ldots,i)$ with $k_0+\ldots+k_{m_j}=i_j + m_j i$. As when we calculated $x^r$ above, we are able to factor an $x$ from each term on the right-hand side and use periodicity to cancel it. Using that $x^{r_j+m_j - 1}$ is a generator and Lemma \ref{modplemma1}, together with the assumption that $r_j$ is minimal, we conclude that $x$ is a nonzero multiple of $P^{i_{j+1}}(y_{j+1})$, as claimed.

 We therefore have a sequence of cohomology elements $y_1,y_2,\ldots$ with $0<\deg(y_j)<d$ and $l + \deg(y_{j+1}) = \deg(y_j) + m_jd$ for some integer $m_j$ for all $j\geq 1$. This cannot be. Indeed, adding the equations $l+\deg(y_1) = rd$ and $l+\deg(y_{j+1}) = \deg(y_j) + m_jd$ for $1\leq j \leq d - 1$ yields
  \[l d + \deg(y_d) = (r+m_1+\ldots+m_{d-1})d,\]
 which implies that $\deg(y_d)$ is divisible by $d$. But $0<\deg(y_d)<d$, so this is a contradiction.
\end{proof}

\LEM{modplemma2} easily implies the following:

\begin{lemma}\label{modplemma3}
No nontrivial multiple of $x^r$ with $1\leq r\leq p$ is of the form $P^i(y)$ with $0<i<\frac{l}{2(p-1)}$.
\end{lemma}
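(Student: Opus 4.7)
The plan is to reduce the claim directly to \LEM{modplemma2}. Suppose for contradiction that $\beta x^r = P^i(y)$ for some nonzero $\beta \in \Z_p$, some $1 \leq r \leq p$, and some $0 < i < \frac{l}{2(p-1)}$. After rescaling $y$ I take $\beta = 1$; the case $r = 1$ is already forbidden by \LEM{modplemma2}, so I assume $r \geq 2$. A degree count gives $\deg y = rl - 2i(p-1)$, and the strict bound on $i$ places this value in the open interval $((r-1)l,\, rl)$.

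The strategy is now to manipulate the equation until it yields a relation of the form $x = P^i(z)$ with $i > 0$, contradicting \LEM{modplemma2}. First I would apply the surjectivity half of periodicity $(r-1)$ times to write $y = x^{r-1} z$ with $0 < \deg z = l - 2i(p-1) < l$. Then I would expand via the Cartan formula:
$$x^r = P^i(x^{r-1} z) = \sum_{j_0 + j_1 + \cdots + j_{r-1} = i} P^{j_0}(z)\, P^{j_1}(x) \cdots P^{j_{r-1}}(x).$$
For each $j_s > 0$, the class $P^{j_s}(x)$ lies in $H^{l + 2j_s(p-1)}$, a degree in the open interval $(l, 2l)$, and periodicity lets me write it as $x \cdot x_{j_s}$ for some $x_{j_s} \in H^{2j_s(p-1)}$. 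Setting $x_0 = 1$, every summand carries a common factor of $x^{r-1}$, which the injectivity half of periodicity (valid since $rl \leq pl \leq c$) permits me to cancel, producing the key identity
$$x = P^i(z) + \sum_{\substack{j_0 + \cdots + j_{r-1} = i \\ j_0 < i}} P^{j_0}(z) \prod_{s \geq 1} x_{j_s}.$$

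The concluding step is to kill every summand with $j_0 < i$. In any such summand the two factors $P^{j_0}(z)$ and $\prod_{s \geq 1} x_{j_s}$ have complementary degrees $l - 2(i - j_0)(p-1)$ and $2(i - j_0)(p-1)$, both strictly between $0$ and $l$ by the hypothesis on $i$. Their product lies in $H^l(M;\Z_p) = \Z_p \cdot x$ and hence equals $\alpha x$ for some $\alpha$; if $\alpha \neq 0$ then \LEM{modplemma1} applied to the resulting factorization of $x$ by an element of degree in $(0, l)$ would exhibit an element of period less than $l$ inducing periodicity, violating the minimality of $l$. Hence every such summand vanishes and $x = P^i(z)$ with $i > 0$, contradicting \LEM{modplemma2}. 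The step I expect to demand the most care is the degree bookkeeping ensuring that both factors of every discarded summand lie in the punctured interval $(0, l)$; this is precisely the strength of the strict inequality $i < \frac{l}{2(p-1)}$, as the weaker bound $2i \leq l$ would allow $i - j_0 = \frac{l}{2(p-1)}$ and push one factor up to degree $l$, invalidating the application of \LEM{modplemma1}.
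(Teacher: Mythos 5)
Your proof is correct and follows essentially the same route as the paper's: write $y=x^{r-1}z$ with $0<\deg(z)<l$, expand $P^i$ via the Cartan formula, cancel the common factor of $x^{r-1}$ by periodicity, kill the cross terms using \LEM{modplemma1} and the minimality of $l$, and land on $x=P^i(z)$, contradicting \LEM{modplemma2}. The only difference is cosmetic: the paper keeps $x^{r-1}$ as a single Cartan factor and isolates one surviving term, while you expand it into $r-1$ copies of $x$ and discard the cross terms individually.
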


\begin{proof}
Indeed, the bound on $i$ implies $\deg(y) = ml - 2i(p-1) > (m-1)l$, so periodicity implies $y = x^{m-1}z$ for some $z$ with $0<\deg(z)<l$. Applying the Cartan formula and periodicity, we obtain $x^m = P^j(x^{m-1})P^{j'}(z)$ for some $j+j'=i$. By Lemma \ref{modplemma1}, $P^{j'}(z)$ is a power of $x$. But since $\deg(P^j(x^{m-1}))\geq (m-1)l$, we must have $x = P^{j'}(z)$. Since $\deg(z) < l$, we have a contradiction to Lemma \ref{modplemma2}.
\end{proof}

At this point, we combine what we have established so far. Recall that we are assuming $x\in H^l(M;\Z_p)$ is nonzero and induces periodicity up to degree $2l\leq c$ and that $x$ has minimal degree among all such elements. Observe that $p>2$ implies $x^3\neq 0$. Hence $l=2k$ for some $k$.

\begin{lemma}\label{modplemma5}
Suppose $l=2k$ and $k=\lambda p^a + \mu$ for some $0<\lambda<p$ and $\mu\equiv 0\bmod{p^{a+1}}$. For all $1\leq m \leq \la$, there exists $r<p$, $0<j\leq mp^a$, and $z\in H^{2p^a(r\la - m(p-1))}(M;\Z_p)$ such that $x^r = P^j(z)$.

Moreover, $j\equiv 0 \bmod{p^a}$ and $0\leq\deg(z)<l$ with $\deg(z)=0$ only if $rk = (p-1)mp^a$.
\end{lemma}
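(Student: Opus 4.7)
The plan is to invoke \LEM{modplemma4} for the given $m$, which produces operators $Q_0, \ldots, Q_a$ in $\mathcal{A}$ satisfying
\[ P^k = P^{mp^a}\circ Q_a + \sum_{i<a} P^{p^i}\circ Q_i, \]
apply this identity to $x$, and use that $P^k(x) = x^p$ (since $\deg x = l = 2k$). Setting $y_i := Q_i(x)$, this yields
\[ x^p = P^{mp^a}(y_a) + \sum_{i<a} P^{p^i}(y_i). \]
The overall strategy is to show that each summand with $i<a$ is a scalar multiple of $x^p$, concentrating the identity into the single equation $P^{mp^a}(y_a) = \gamma\, x^p$, then to expand this via Cartan and extract $x^r = P^j(z)$ by cancelling a power of $x$ using periodicity.

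For the $i<a$ summands, note $p^i(p-1) < p^a \leq k$, so $y_i \in H^{pl - 2p^i(p-1)}$ has degree strictly exceeding $(p-1)l$. \LEM{modplemma1} then factors $y_i = x^{p-1} z_i$ with $0 < \deg(z_i) < l$. Expanding $P^{p^i}(x^{p-1} z_i)$ by iterated Cartan, each factor $P^{j_s}(x) \in H^{l + 2j_s(p-1)}$ with $j_s \leq p^i < k/(p-1)$ can be rewritten as $x\,\xi_{j_s}$ via periodicity. After factoring out $x^{p-1}$, the cofactor lies in $H^l = \Z_p\cdot x$ (one-dimensional by periodicity and minimality of $l$), so $P^{p^i}(y_i) = \alpha_i x^p$ for some $\alpha_i \in \Z_p$. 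Substituting back, $P^{mp^a}(y_a) = \gamma x^p$ with $\gamma := 1 - \sum_{i<a}\alpha_i$.

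Next, $\deg(y_a) = pl - 2mp^a(p-1)$, and the bound $m \leq \lambda$ together with $k = \lambda p^a + \mu$ yields $\deg(y_a)/l \in [1, p)$. Periodicity therefore factors $y_a = x^q z$ with $q \in \{1,\ldots,p-1\}$ and $0 \leq \deg(z) < l$; set $r := p-q$, so $r<p$ as required. A direct computation gives $\deg(z) = 2(rk - mp^a(p-1))$, which in particular recovers the criterion $\deg(z) = 0 \Leftrightarrow rk = (p-1)mp^a$ from the ``moreover'' clause. Expanding $P^{mp^a}(x^q z)$ by iterated Cartan and once again replacing each $P^{j_s}(x)$ by $x\,\xi_{j_s}$ (valid as long as $j_s < k$, which holds in the main case $mp^a<k$), we factor $x^q$ out of every summand and cancel it against $x^p$ on the right via periodicity. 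This yields the key identity
\[ \gamma x^r = \sum_{j_1+\cdots+j_q+j_z = mp^a} \xi_{j_1}\cdots\xi_{j_q}\, P^{j_z}(z) \]
in $H^{rl} = \Z_p \cdot x^r$.

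The principal obstacle is to extract from this identity a single equation of the form $x^r = P^j(z)$ with $j\equiv 0 \bmod p^a$ and $0 < j \leq mp^a$. The natural candidate is the summand with $j_1=\cdots=j_q=0$ and $j_z=mp^a$, which produces $P^{mp^a}(z) = \beta x^r$; if $\beta \neq 0$ we are done with $j=mp^a$. When $\beta=0$, the strategy is to use \LEM{modplemma3}, which forbids nonzero scalar multiples of $x^r$ of the form $P^{j_z}(\cdot)$ when $0 < j_z < k/(p-1)$, to kill most of the other $\beta_{\vec j}$, and to combine the surviving terms using the divisibility structure underlying \LEM{modplemma4} so that $j$ emerges as a multiple of $p^a$ at most $mp^a$. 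The boundary case $mp^a=k$ (occurring only when $m=\lambda$ and $\mu=0$) introduces extra Cartan terms in which some $P^{j_s}(x) = x^p$; these contribute only multiples of $x^{p+q-1}z$ and are absorbed into the same framework.
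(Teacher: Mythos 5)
Your setup coincides with the paper's: apply \LEM{modplemma4} to $x$, use $P^k(x)=x^p$, factor $Q_a(x)=x^{p-r}z$ via periodicity, expand by the Cartan formula, and cancel a power of $x$. But the argument has a genuine gap precisely at the point you flag as ``the principal obstacle,'' and the mechanism you propose there is not the one that works. After cancellation you sit inside $H^{rl}(M;\Z_p)$ (resp.\ $H^{(p-1)l}$ in the paper's bookkeeping), which is \emph{one-dimensional}; hence every individual summand $\xi_{j_1}\cdots\xi_{j_q}P^{j_z}(z)$ is a scalar multiple of $x^r$, and since the total is nonzero, at least one single summand is a \emph{nonzero} multiple. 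For that summand, \LEM{modplemma1} (minimality of $l$) forces each factor to be a power of $x$; in particular $\deg(\xi_{j_s})=2j_s(p-1)$ must be a multiple of $l=2(\lambda p^a+\mu)$, and since $\gcd(p-1,p)=1$ and $p^{a+1}\mid\mu$, this forces $p^a\mid j_s$, hence $p^a\mid j_z$; the term with $j_z=0$ is excluded because $z$ (of degree strictly between $0$ and $l$) cannot be a power of $x$, and terms where $P^{j_z}(z)$ has small positive $j_z$ not divisible by $p^a$ are excluded by \LEM{modplemma3}. There is no ``combining of surviving terms,'' and the divisibility $p^a\mid j$ comes from this degree arithmetic, not from ``the divisibility structure underlying \LEM{modplemma4}.'' As written, your sketch does not establish the clauses $j>0$ and $j\equiv 0\bmod p^a$, which are exactly what the applications of the lemma in the proof of \PROP{modplemma} rely on.

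A second, smaller gap: you absorb the $i<a$ summands into a scalar $\gamma=1-\sum_{i<a}\alpha_i$ and then work with $P^{mp^a}(y_a)=\gamma x^p$, but if $\gamma=0$ this identity is vacuous and nothing can be extracted. You must show each $\alpha_i=0$ (equivalently, that no $i<a$ summand is a nonzero multiple of $x^p$). This again follows from the one-dimensionality argument: a nonzero such summand would, after applying \LEM{modplemma1} to kill the degree-$2j_s(p-1)<l$ factors, exhibit a nonzero multiple of a power of $x$ as $P^{j'}(z_i)$ with $0<j'\leq p^{a-1}<l/2(p-1)$, contradicting \LEM{modplemma3}. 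The paper handles both issues at once by observing that \emph{some} summand of the full expansion must be a nonzero multiple of the generator and that \LEM{modplemma1} and \LEM{modplemma3} leave only summands of the required form; you should restructure your final two paragraphs around that single observation rather than around isolating $P^{mp^a}(y_a)$ first.
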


\begin{proof}
Let $l$, $k$, and $m$ be as in the assumption. Evaluating the expression in \LEM{modplemma4} on $x$ yields
  \[x^p = P^k(x) = P^{mp^a}(Q_a(x)) + \sum_{i < a} P^{p^i}(Q_i(x)).\]
Using periodicity, we can write $Q_i(x) = x z_i$ for $i<a$, $Q_a(x) = x^{p-r}z$ for some $1\leq r<p$ such that $0\leq \deg(z) < l$, $P^j(x)=xy_i$ for all $j$, and $P^{mp^a-j}(x^{p-r})=x^{p-r} w_j$ for all $j$.

Using this notation and the Cartan formula, we have
  \[x^p = x^{p-r} \sum_{j\leq mp^a} w_j P^j(z) + x \sum_{i<a} \sum_{j\leq i} y_j P^{p^i-j}(z_i).\]
Using periodicity again, we obtain
  \[x^{p-1} = x^{p-r-1}\sum_{j\leq mp^a} w_j P^j(z) + \sum_{i<a}\sum_{j\leq i} y_j P^{p^i-j}(z_i).\]
Periodicity implies $x^{p-1}$ is a (nonzero) generator of $H^{(p-1)l}(M;\Z_p)$, hence a nontrivial multiple of $x^{p-1}$ is $x^{p-r-1} w_j P^j(z)$ for some $j\leq mp^a$ or $y_j P^{p^i-j}(z_i)$ for some $j\leq i<a$. In the second case, we have a contradiction to \LEM{modplemma1} or \LEM{modplemma3} since $p^i-j \leq p^{a-1} < l/2(p-1)$. Similarly, we have a contradiction to \LEM{modplemma1} in the first case unless $w_j$ is a power of $x$. Moreover, $\deg(w_j) = 2(p-1)(mp^a-j)$ implies $j=ip^a$ for some $0\leq i \leq m$.

Using periodicity to cancel powers of $x$, we conclude that $x^r = P^j(z)$ for some $r$, $j$, and $z$ as in the conclusion of the lemma.
\end{proof}

Using this result, we are in a position to prove \PROP{modplemma}, that is, we are ready to show that $l = 2\lambda p^r$ for some $r\geq 0$ and some $\lambda|p-1$: 

\begin{proof}[Proof of \PROP{modplemma}]
Suppose $x\in H^l(M;\Z_p)$ is nonzero, induces periodicity up to degree $c$ with $pl\leq c$, and has minimal degree among all such elements.

Since $p>2$, we have $l=2k$ for some $k$. Write $k=\lambda p^a + \mu$ where $0<\lambda < p$ and $\mu \equiv 0 \bmod{p^{a+1}}$. Let $g = \gcd(\lambda, p-1)$. Our task is to show that $\mu = 0$ and that $g=\la$. We prove this by contradiction using three cases.

Suppose first that $\mu>0$. Take $m=1$ in \LEM{modplemma5}. Then we have $x^r = P^{p^a}(z)$ with $\deg(z)>0$ since 
	\[rk \geq k \geq \mu \geq p^{a+1} > (p-1)mp^a.\]
Because
  \[p^a = (p^a + p^{a+1})/(p+1) < l/2(p-1),\]
we have a contradiction to \LEM{modplemma3}.

Second, suppose that $\mu=0$ and $1=g<\la$. Choose $1\leq m < \la$ such that $m(p-1) \equiv -1 \mod{\la}$. \LEM{modplemma5} implies the existence of $r<p$, $0<j\leq mp^a$ with $j\equiv 0\bmod{p^a}$, and $z\in H^{2p^a(r\la - m(p-1))}(M;\Z_p)$ with $0\leq \deg(z)<l$ such that $x^r=P^j(z)$. Our choice of $m$ and the conditions on $\deg(z)$ imply $\deg(z) = 2p^a$. In addition, $x^r\neq 0$ implies $j\leq p^a$, so the conditions on $j$ imply $j=p^a$. Putting these facts together implies $x^r = P^j(z)= z^p$. Because $0<\deg(z)<l$, this contradicts \LEM{modplemma1}.

Finally, suppose that $\mu = 0$ and $1 < g < \la$. Taking $m=1$ yields $x^r=P^{p^a}(z)$ with $\deg(z)>0$ since $g<\lambda$ implies
	\[(p-1)mp^a = (p-1)p^a \neq r\lambda p^a =rk.\]
Raising both sides to the $(\lambda/g)$-th power, we obtain
  \[x^{r\lambda/g} = P^{\lambda p^a/g}(z^{\lambda/g}) - \sum P^{i_1}(z)\cdots P^{i_{\lambda/g}}(z)\]
where the sum runs over $i_1 + \ldots + i_{\lambda/g} = \lambda p^a/g$ with $(i_1,\ldots,i_{\lambda/g})\neq(p^a, \ldots, p^a)$. Observe that $\deg(z^{\lambda/g})$ is a multiple of $l=\deg(x)$ while $0<\deg(z)<l$, so Lemma \ref{modplemma1} implies $z^{\lambda/g}=0$. Now $g\geq 2$ and $rl-2(p-1)p^a = \deg(z) < l$ implies $r\lambda/g < p$, so $x^{r\lambda/g}$ is nonzero and generates $H^{lr\lambda/g}(M;\Z_p)$. This implies $x^{r\lambda/g}$ is a nontrivial multiple of $P^{i_1}(z)\cdots P^{i_{\lambda/g}}(z)$ for some $(i_1,\ldots,i_{\lambda/g})$. Using Lemma \ref{modplemma1} again, we conclude that each $P^{i_j}(z)$ is a power of $x$. But the degrees of $x$ and $z$ implies that this is only the case if $i_j\geq p^a$ for all $j$. Since there is no such term in the sum, we obtain a contradiction.
%The only possibility left is that $\mu = 0$ and that $g=\la$. This implies that $l$ is of the desired form.
\end{proof}

\bigskip
\section{Proof of \THM{THM4-newer}}\label{SECmodQperiodicity}
\bigskip

In this section, we use \PROPS{mod2lemma}{modplemma} to prove \THM{THM4-newer} in the introduction. We are given a closed, simply connected manifold $M^n$ and an element $x\in H^k(M;\Z)$ inducing periodicity with $3k\leq n$. Note that if $x$ is a torsion element, then $M$ is a rational homology sphere. Since $H^*(M;\Q)$ is then trivially $\gcd(4,k)$-periodic, we may assume $x$ is not a torsion element. By periodicity, $H^k(M;\Z)\cong \Z$ and is generated by $x$.

Consider the nonzero image $x_2\in H^k(M;\Z_2)$ of $x$ under the reduction homomorphism $H^k(M;\Z)\to H^k(M;\Z_2)$. It follows by the Bockstein sequence
  \[\cdots \stackrel{\cdot 2}{\lra} H^i(M;\Z) \stackrel{\rho}{\lra} H^i(M;\Z_2) \lra H^{i+1}(M;\Z) \lra\cdots\]
and the five lemma that $x_2$ induces periodicity in $H^*(M;\Z_2)$. Denote by $y\in H^l(M;\Z_2)$ the element of minimal degree which induces periodicity. \PROP{mod2lemma} says $l$ is a power of 2.

We claim that $l$ divides $k$. If not, there exists a cohomology element $y'$ with $0<\deg(y')<l$ such that $y^m = x_2 y'$ for some integer $m$. By \LEM{modplemma1}, it follows that $y'$ also induces periodicity, a contradiction to the minimality of $l$. We have then that $l\mid k$. Moreover, periodicity implies $y^{k/l} = x_2$.

Next we show that $y$ comes from an integral element $\tilde{y}\in H^l(M;\Z)$ such that the map $H^i(M;\Z)\to H^{i+l}(M;\Z)$ induced by multiplication by $\tilde{y}$ has finite kernel for all $0<i<n$. Let $\rho:H^l(M;\Z)\to H^l(M;\Z_2)$ be the map induced by reduction modulo 2. Consider first that (via multiplication by $y$) $0=H^1(M;\Z_2) \cong H^{1+l}(M;\Z_2)$, which implies $b_{l+1}(M)\leq b_{l+1}(M;\Z_2)=0$. Next consider the following portion
 	\[H^l(M;\Z) \to H^l(M;\Z_2) \to H^{l+1}(M;\Z) \to H^{l+1}(M;\Z) \to H^{l+1}(M;\Z_2)\]
of the Bockstein sequence. We see that that $H^{l+1}(M;\Z)\to H^{l+1}(M;\Z)$ is a surjection and hence an isomorphism since $H^{l+1}(M;\Z)$ is finite. Using exactness again, we conclude $\rho:H^l(M;\Z)\to H^l(M;\Z_2)$ is surjective, so that we can choose some $\tilde{y}\in H^l(M;\Z)$ with $\rho(\tilde{y})=y$. Now $H^k(M;\Z)$ is generated by $x$, so $\tilde{y}^{k/l} = mx$ for some $m\in\Z$. Applying $\rho$ to both sides yields
 	\[mx_2 = \rho(\tilde{y}^{k/l}) = y^{k/l} =x_2\neq 0,\]
hence $m\neq 0$. This proves that multiplication by $\tilde{y}$ has finite kernel.

Moving to rational coefficients, we conclude that $\bar{y}\in H^l(M;\Q)$, the image of $\tilde{y}$ under the coefficient map $H^l(M;\Z)\to H^l(M;\Q)$, induces periodicity in $H^*(M;\Q)$.

A completely analogous argument using \PROP{modplemma} with $p=3$ shows that $H^*(M;\Q)$ is $m$-periodic with $m = 4\cdot 3^s$. Taking $m$ to be minimal, it follows again that $m\mid k$. At this point it is clear that the Betti numbers of $M$ are $\gcd(k,l,m)$-periodic and hence $\gcd(4,k)$-periodic.

To conclude that $H^*(M;\Q)$ is $\gcd(4,k)$-periodic, consider the set $D$ of all positive integers $d$ such that $H^*(M;\Q)$ has an element in degree $d$ which induces periodicity. Clearly $k,l,m\in D$. We claim that $d_1,d_2\in D$ with $d_1>d_2$ implies $d_1 - d_2\in D$. Indeed suppose $z_1\in H^{d_1}(M;\Q)$ and $z_2\in H^{d_2}(M;\Q)$ induce periodicity in $H^*(M;\Q)$. Since $z_2$ induces periodicity, there exists $z_3\in H^{d_1-d_2}(M;\Q)$ such that $z_1 = z_2z_3$. Since $z_1$ induces periodicity, \LEM{modplemma1} implies that $z_3$ does as well. Since the difference of any two elements in $D$ lies in $D$, it follows that $\gcd(k,l,m)$, and hence $\gcd(4,k)$, also lies in $D$.

\bigskip
\section{Proof of \THM{periodicityTHM}}\label{SECperiodicitytheorem}
\bigskip

The starting point of the proof is the following theorem of Wilking:
\begin{theorem}[Connectedness Theorem, \cite{Wilking03}]\label{THMconnectednesstheorem}
Suppose $M^n$ is a closed Riemannian manifold with positive sectional curvature.
 \begin{enumerate}
  \item If $N^{n-k}$ is connected and totally geodesic in $M$, then $N\embedded M$ is $(n- 2k + 1)$-connected. %Moreover, if $N$ is fixed by a circle action on $M$, then $N\embedded M$ is $(n-2k+2)$-connected.
  \item If $N_1^{n-k_1}$ and $N_2^{n-k_2}$ are totally geodesic with $k_1\leq k_2$, then $N_1\cap N_2\embedded N_2$ is $(n - k_1 - k_2)$-connected.
 \end{enumerate}
\end{theorem}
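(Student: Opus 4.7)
The plan is Morse-theoretic, applied to the energy functional $E(\gamma) = \int_0^1 |\dot\gamma|^2\, dt$ on a suitable space of paths. For part (2), take the Hilbert manifold $\Omega = \Omega(M;N_1,N_2)$ of $H^1$-paths $\gamma\colon [0,1]\to M$ with $\gamma(0)\in N_1$ and $\gamma(1)\in N_2$, on which $E$ is a Morse--Bott functional (satisfying Palais--Smale because $M$ is compact) whose critical set consists of geodesics meeting $N_1$ and $N_2$ orthogonally at their endpoints. When $N_1\cap N_2\neq\emptyset$, the zero-energy stratum is precisely $N_1\cap N_2$, embedded as the constant paths. For part (1) the analogous setup is $\Omega(M;N,N)$, whose zero-energy stratum is $N$.

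The heart of the argument is an index estimate for nontrivial critical points. Given a critical geodesic $\gamma\colon[0,1]\to M$ with $\gamma(0)\in N_1$, $\gamma(1)\in N_2$, and $E(\gamma)>0$, let $P_\gamma\colon T_{\gamma(0)}M\to T_{\gamma(1)}M$ denote parallel transport along $\gamma$, and consider
\[
W \;=\; T_{\gamma(0)}N_1\,\cap\, P_\gamma^{-1}\bigl(T_{\gamma(1)}N_2\bigr)\;\subseteq\; T_{\gamma(0)}M.
\]
By dimension counting, $\dim W \geq n - k_1 - k_2$. Each $V_0\in W$ extends to a parallel field $V(t)=P_\gamma^tV_0$ along $\gamma$ with $V(0)\in T_{\gamma(0)}N_1$, $V(1)\in T_{\gamma(1)}N_2$, and $V\perp \dot\gamma$ (since $\dot\gamma$ is normal to $N_i$ at the endpoints). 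The second variation of $E$ with free endpoints in $N_1,N_2$, evaluated on such a $V$, reduces to
\[
I(V,V) \;=\; -\int_0^1 \bigl\langle R(V,\dot\gamma)\dot\gamma,\, V\bigr\rangle\, dt,
\]
because $V$ is parallel and the boundary contributions vanish (the second fundamental forms of $N_1,N_2$ are zero). Positive sectional curvature then forces $I(V,V)<0$ for every nonzero $V_0\in W$, producing at least $n-k_1-k_2$ linearly independent negative directions in the Hessian and hence the rough bound $\operatorname{ind}(\gamma)\geq n-k_1-k_2$. A refinement (using a carefully chosen non-parallel perturbation near an endpoint, or exploiting a reflection/symmetry within the totally geodesic setting) then upgrades this by the missing units to the sharp value needed in each case.

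With the sharp index estimate in hand, Morse--Bott theory presents $\Omega$ as the zero-energy stratum with cells of appropriately high dimension attached, so that the inclusion of the minimum stratum into $\Omega$ has the required connectedness. For part (2), I would conclude by using that $\Omega$ is a model for the homotopy fiber product $N_1\times_M^h N_2$: combining the connectedness of $N_1\cap N_2\embedded\Omega$ with the evaluation fibration $\Omega\to N_2$ and chasing the resulting long exact sequences of homotopy groups yields the desired connectedness of $N_1\cap N_2\embedded N_2$. Part (1) is handled similarly, either via $\Omega(M;N,N)$ composed with midpoint evaluation $\Omega(M;N,N)\to M$, or more directly via the fixed-endpoint path space $\Omega(M;\{p\},N)$ for $p\in N$, which is a model for the homotopy fiber of $N\embedded M$.

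The main obstacle is pinning down the sharp value of the index bound. The parallel-transport/dimension-count argument is off by one (for part (2)) or two (for part (1)) from what the theorem requires, and recovering these missing units is where the positive-curvature hypothesis really bites. Wilking's key novelty is precisely this refinement; executing it correctly, while keeping track of all boundary contributions coming from the free-endpoint problem on a totally geodesic submanifold, is the crucial technical step of the proof.
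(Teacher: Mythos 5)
This is Wilking's Connectedness Theorem, which the paper cites from \cite{Wilking03} without proof, so I am comparing your sketch against Wilking's argument rather than anything in this paper. Your general framework is the right one: Morse theory for the energy functional on $\Omega(M;N_1,N_2)$, with nontrivial critical points being geodesics meeting $N_1,N_2$ orthogonally, and a Frankel-type second-variation estimate on parallel fields. But the step you defer --- ``a carefully chosen non-parallel perturbation near an endpoint, or exploiting a reflection/symmetry'' --- is exactly the content of the theorem, and as stated your sketch does not close it. Moreover, the actual fix is not a perturbation argument at all but a one-line piece of linear algebra that your dimension count overlooks: the velocity field $\dot\gamma$ is itself parallel and lies in the normal space of $N_1$ at $\gamma(0)$ and of $N_2$ at $\gamma(1)$. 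Hence $P_\gamma(\nu_{\gamma(0)}N_1)\cap \nu_{\gamma(1)}N_2\neq 0$, so $P_\gamma(T_{\gamma(0)}N_1)+T_{\gamma(1)}N_2$ has dimension at most $n-1$, and therefore
\[
\dim\bigl(T_{\gamma(0)}N_1\cap P_\gamma^{-1}(T_{\gamma(1)}N_2)\bigr)\;\geq\;(n-k_1)+(n-k_2)-(n-1)\;=\;n-k_1-k_2+1 .
\]
Thus the index of every nonconstant critical geodesic is at least $n-k_1-k_2+1$ (and at least $n-2k+1$ when $N_1=N_2=N$); your assessment that one is ``off by one or two'' and that different refinements are needed in the two cases is not accurate --- the same $+1$ does both jobs, and without it the cell-attachment argument (cells of dimension at least the index kill $\pi_i$ of the pair only for $i\leq\mathrm{index}-1$) falls short of the stated connectivity.

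The second gap is in how you pass from the index bound to the two statements. For part (1) you propose working in the fixed-endpoint space $\Omega(M;\{p\},N)$, but there parallel fields do not satisfy the boundary condition $V(0)=0$, so the estimate above does not transfer; you would be forced into cutoff functions and a genuinely weaker bound. Wilking instead proves a single comparison lemma: if every nonconstant critical geodesic of $\Omega(M;N_1,N_2)$ has index at least $\lambda$, then the inclusion-induced map $\pi_i(N_2,N_1\cap N_2)\to\pi_i(M,N_1)$ is an isomorphism for $i<\lambda$ and surjective for $i=\lambda$ (and in particular $N_1\cap N_2\neq\emptyset$, since otherwise the energy minimum would be a nonconstant geodesic of index zero). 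Part (1) is the case $N_1=N_2=N$, where the source vanishes and surjectivity in degree $\lambda=n-2k+1$ gives $\pi_i(M,N)=0$ for $i\leq n-2k+1$; part (2) then follows by feeding the conclusion of part (1) into the isomorphism range with $\lambda=n-k_1-k_2+1$. Your homotopy-fiber-product bookkeeping can likely be massaged into this, but as written it does not explain where part (1) enters the proof of part (2), nor why the top degree $i=n-k_1-k_2$ is reached.
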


Recall an inclusion $N\embedded M$ is called $h$-connected if $\pi_i(M,N)=0$ for all $i\leq h$. It follows from the relative Hurewicz theorem that the induced map $H_i(N;\Z) \to H_i(M;\Z)$ is an isomorphism for $i<h$ and a surjection for $i=h$. The following is a topological consequence of highly connected inclusions of closed, orientable manifolds (see \cite{Wilking03}):

\begin{theorem}
Let $M^n$ and $N^{n-k}$ be closed, orientable manifolds. If $N\embedded M$ is $(n-k-l)$-connected with $n-k-2l>0$, then there exists $e\in H^k(M;\Z)$ such that the maps $H^i(M;\Z)\to H^{i+k}(M;\Z)$ given by $x\mapsto ex$ are surjective for $l\leq i<n-k-l$ and injective for $l<i\leq n-k-l$.
\end{theorem}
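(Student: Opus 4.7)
\medskip

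The plan is to construct $e$ as the Poincaré dual of the fundamental class of $N$ in $M$ and then factor multiplication by $e$ as restriction to $N$ followed by the Gysin (umkehr) map. Explicitly, let $\iota:N\embedded M$ denote the inclusion and define
\[
e = \mathrm{PD}_M^{-1}\bigl(\iota_*[N]\bigr)\in H^k(M;\Z),
\]
where $\mathrm{PD}_M:H^k(M;\Z)\to H_{n-k}(M;\Z)$ is the Poincaré duality isomorphism. The Gysin map $\iota_!:H^i(N;\Z)\to H^{i+k}(M;\Z)$ is defined by $\iota_!=\mathrm{PD}_M^{-1}\circ\iota_*\circ\mathrm{PD}_N$. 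The projection formula for umkehr maps gives $\iota_!(\iota^*(x)) = x\cdot\iota_!(1) = x\cdot e$, so multiplication by $e$ factors as
\[
H^i(M;\Z)\xrightarrow{\ \iota^*\ }H^i(N;\Z)\xrightarrow{\ \iota_!\ }H^{i+k}(M;\Z).
\]

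Next I would extract the connectivity information from the hypothesis. Since $\iota$ is $(n-k-l)$-connected, the relative Hurewicz theorem and universal coefficients give $H^i(M,N;\Z)=0$ for $i\le n-k-l$. Plugging this into the long exact sequence of the pair $(M,N)$ shows that
\[
\iota^*:H^i(M;\Z)\to H^i(N;\Z)
\]
is an isomorphism for $i<n-k-l$ and is injective for $i\le n-k-l$. Dually, $H_j(M,N;\Z)=0$ for $j\le n-k-l$ yields that $\iota_*:H_j(N;\Z)\to H_j(M;\Z)$ is an isomorphism for $j<n-k-l$ and surjective for $j=n-k-l$. Translating through Poincaré duality (using $\dim N=n-k$ and $\dim M=n$), this says
\[
\iota_!:H^i(N;\Z)\to H^{i+k}(M;\Z)
\]
is an isomorphism for $i>l$ and surjective for $i\ge l$.

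Combining the two ranges yields the conclusion: the composite $\iota_!\circ\iota^*$, i.e.\ multiplication by $e$, is surjective whenever $l\le i<n-k-l$ (both factors are at least surjective in this range, and $\iota^*$ is an isomorphism) and injective whenever $l<i\le n-k-l$ ($\iota^*$ is injective and $\iota_!$ is an isomorphism). This is exactly the statement to be proved.

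The main obstacle is really just the bookkeeping of the projection formula and the careful translation between the homological connectivity of $\iota_*$ and the cohomological behaviour of $\iota_!$, which uses the orientability of both $M$ and $N$ in an essential way; once one is comfortable with Poincaré duality and the Gysin construction for a closed oriented submanifold (via the Thom class of its normal bundle), the argument is a direct long-exact-sequence chase. The hypothesis $n-k-2l>0$ is what ensures the two ranges $i<n-k-l$ and $i>l$ actually overlap, so that the combined periodicity range is nonempty.
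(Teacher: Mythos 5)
Your argument is correct, and it is exactly the standard proof of this statement: the paper itself does not reprove it but cites \cite{Wilking03}, where $e$ is likewise taken to be the Poincar\'e dual of $[N]$ and multiplication by $e$ is factored via the projection formula as $\iota_!\circ\iota^*$, with the two ranges read off from the long exact sequences of the pair $(M,N)$. The only point to flag is that passing from $\pi_i(M,N)=0$ to $H_i(M,N;\Z)=0$ via relative Hurewicz implicitly uses a fundamental-group hypothesis, but the paper makes the identical assertion in the sentence preceding the theorem, so your treatment is consistent with the source.
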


Combining these results with \THM{THM4-newer}, we will prove in this section the following slightly stronger version of Theorem \ref{periodicityTHM}.

\begin{theorem}\label{THMstrongperiodicity}
Let $N^n$ be a closed, simply connected Riemannian manifold with positive sectional curvature. Let $N_1^{n-k_1}$ and $N_2^{n-k_2}$ be totally geodesic, transversely intersecting submanifolds with $k_1\leq k_2$.
	\begin{enumerate}
	\item If $k_1 + 3k_2 \leq n$, then the rational cohomology rings of ${N}$, ${N_1}$, ${N_2}$, and ${N_1\cap N_2}$ are $\gcd(4,k_1,k_2)$-periodic.
	\item If $2k_1 + 2k_2 \leq n$, then the rational cohomology rings of ${N}$, ${N_1}$, ${N_2}$, and ${N_1\cap N_2}$ are $\gcd(4,k_1)$-periodic.
	\item If $3k_1 + k_2 \leq n$ and if $N_2$ is simply connected, then the rational cohomology rings of ${N_2}$ and ${N_1\cap N_2}$ are $\gcd(4,k_1)$-periodic.
	\end{enumerate}
\end{theorem}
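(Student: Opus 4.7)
The plan is to combine Wilking's Connectedness Theorem (\THM{THMconnectednesstheorem}) and its topological corollary on highly-connected inclusions with \THM{THM4-newer}. Wilking's results supply integral periodic elements coming from the codimensions $k_1$ and $k_2$; \THM{THM4-newer} then refines each such integer periodicity into rational periodicity of period $\gcd(4,\cdot)$. Combining the resulting rational periodicities using the $\gcd$-argument from the end of \SEC{SECmodQperiodicity} delivers the final period claimed in each of (1), (2), (3).

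First, I would apply the corollary on highly-connected inclusions to each of the four inclusions $N_1\embedded N$, $N_2\embedded N$, $N_1\cap N_2\embedded N_1$, and $N_1\cap N_2\embedded N_2$. Because the intersection is transverse, $\cod(N_1\cap N_2\subset N_i)=k_{3-i}$, so the codimensions that enter are $k_1, k_2, k_2, k_1$, respectively. This produces integral periodic elements on each of $N, N_1, N_2$, and $N_1\cap N_2$. A priori the periodicity holds only in a middle range of degrees, and I would use Poincar\'{e} duality together with the simple-connectivity of the submanifolds (inherited from $N$ via the high-connectedness of the inclusions, and in case (3) from the hypothesis on $N_2$) to extend the periodicity to the full range on integer cohomology.

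Next, I would verify the hypothesis $3k\leq \dim$ of \THM{THM4-newer} for each relevant pair. In (1), $k_1+3k_2\leq n$ yields both $3k_2\leq n$ on $N$ and $3k_1\leq n-k_2$ on $N_2$, with analogous inequalities for $N_1$ and $N_1\cap N_2$. In (2) the assumption $2k_1+2k_2\leq n$ gives $3k_1\leq n-k_2$ on $N_2$, which combined with the periodic element of period $k_1$ inherited from $N_1\cap N_2\embedded N_2$ yields $\gcd(4,k_1)$-periodicity of $H^*(N_2;\Q)$. In (3) the assumption $3k_1+k_2\leq n$ is exactly what is needed on $N_2$. After applying \THM{THM4-newer} and the $\gcd$-argument of \SEC{SECmodQperiodicity} (combining periodic elements of degrees $d_1, d_2$ into one of degree $\gcd(d_1,d_2)$ by iterating \LEM{modplemma1}), I would obtain $\gcd(4, k_1, k_2)$-periodicity in case (1) and $\gcd(4, k_1)$-periodicity in cases (2) and (3) on at least one of $N, N_1, N_2, N_1\cap N_2$. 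Finally, rational periodicity transfers between these manifolds via the high-connectedness of the inclusions, combined with Poincar\'{e} duality in the top degrees.

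The main obstacle I anticipate lies in the extension step, promoting Wilking's middle-range integer periodicity to full periodicity so that the hypothesis of \THM{THM4-newer} holds as stated. Wilking's theorem only supplies injectivity/surjectivity of multiplication by the periodic element in a middle band of degrees; extending down to degree $0$ and up to degree $\dim-k$ requires either a careful Poincar\'{e}-duality argument exploiting simple connectivity, or else a direct application of the mod-$p$ statements \PROP{mod2lemma} and \PROP{modplemma}, which allow periodicity only up to a specified degree $c$ and so could sidestep the extension step entirely. The latter route seems cleaner, and I would expect the actual proof to pass through the $c$-relative propositions rather than \THM{THM4-newer} itself.
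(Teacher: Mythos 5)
Your toolbox is the right one (connectedness theorem, its topological corollary, \THM{THM4-newer}, and the $\gcd$-combination argument), and you have correctly located the central difficulty: Wilking's corollary only gives periodicity in a middle band of degrees. But your proposed resolutions of that difficulty do not work, and you miss the observation that makes the proof go. Because the intersection is \emph{transverse}, $\dim(N_1\cap N_2)=n-k_1-k_2$ while the inclusion $N_1\cap N_2\embedded N_2$ is $(n-k_1-k_2)$-connected; so in the topological corollary one has $l=0$ and the integral $k_1$-periodicity of $H^*(N_2;\Z)$ holds in the \emph{full} range with no extension step at all. This is where \THM{THM4-newer} is applied (the hypothesis $3k_1\leq n-k_2$ holds in all three cases), giving full rational $\gcd(4,k_1)$-periodicity of $N_2$ and hence of $N_1\cap N_2$ --- which already disposes of case (3). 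By contrast, your plan applies the corollary to $N_1\embedded N$ and $N_2\embedded N$ separately and then hopes to extend the resulting middle-range periodicity on $N$ by ``Poincar\'e duality together with simple-connectivity.'' That extension is genuinely false as a self-contained step: the paper's argument for full periodicity of $H^*(N;\Q)$ does not use duality alone, but pulls the degree-$\gcd(4,k_1)$ generator $x$ back from $N_2$ (using that $H^g(N;\Q)\to H^g(N_2;\Q)$ is an isomorphism when $2k_1+2k_2\leq n$), identifies the Wilking class $\overline{e}_1$ with $x^{k_1/g}$, and then covers the low degrees via the connectivity of $N_2\embedded N$, the high degrees via duality, and the middle degrees via commuting $x$ with $\overline{e}_1$. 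Without the second submanifold feeding you a full-range class of small degree, there is nothing to propagate into the bands $0<i<k_1$ and $n-2k_1<i<n-k_1$.

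Your suggested alternative --- rerouting through the degree-$c$-relative Propositions \ref{mod2lemma} and \ref{modplemma} instead of \THM{THM4-newer} --- is also not what happens and would not obviously close the gap: those propositions only constrain the minimal degree of a mod-$p$ periodicity element, and converting that into rational periodicity requires the Bockstein/integral-lifting argument of \SEC{SECmodQperiodicity}, which is set up for full (closed-manifold) periodicity. Finally, in case (1) you propose to apply \THM{THM4-newer} to $N$ with the $k_2$-class; the paper never does this (it cannot, for the same middle-range reason), and instead first establishes $\gcd(4,k_1)$-periodicity of $N$ as above and then shows the middle-range class $e_2\in H^{k_2}(N;\Q)$ induces full periodicity by interleaving it with $x$, after which the $\gcd$-argument yields $\gcd(4,k_1,k_2)$.
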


We make two remarks. First, all three codimension assumptions imply that $N_1$ is simply connected and that $N_1\cap N_2$ is simply connected if $N_2$ is. This follows by the connectedness theorem since the bounds on the codimensions imply that the inclusions $N_1\embedded N$ and $N_1\cap N_2 \embedded N_2$ induce isomorphisms of fundamental groups. Similarly the first two assumptions imply that $N_2$ is simply connected, but the third condition does not. 

Second, in the proof of \THM{chi>0}, we will only use the following consequence of \THM{THMstrongperiodicity}:

\begin{corollary}\label{CORtoPerThm}
Let $N^n$ be a closed, positively curved manifold with $n\equiv 0\bmod{4}$. Let $N_1^{n-k_1}$ and $N_2^{n-k_2}$ be totally geodesic, transversely intersecting submanifolds with $2k_1 + 2k_2 \leq n$. Then $\bodd(N) = \sum b_{1+2i}(N) = 0$.
\end{corollary}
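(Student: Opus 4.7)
The plan is to apply \THM{THMstrongperiodicity}(2) to obtain that $H^*(N;\Q)$ is $d$-periodic with $d = \gcd(4, k_1) \in \{1, 2, 4\}$, and then deduce the vanishing of odd Betti numbers by combining periodicity with Poincar\'e duality and simple-connectedness (the latter furnished by Synge's theorem).

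Since $n$ is even and $N$ carries positive sectional curvature, Synge's theorem yields $\pi_1(N) \in \{1, \Z/2\}$. If $\pi_1(N) \cong \Z/2$, I would pass to the orientable double cover $\pi\colon \tilde N \to N$: it is simply connected and positively curved, and choosing components of $\pi^{-1}(N_1)$ and $\pi^{-1}(N_2)$ meeting above a point of $N_1\cap N_2$ gives transversely intersecting totally geodesic submanifolds in $\tilde N$ of the same codimensions $k_1, k_2$ (transversality lifts because $\pi$ is a local isometry). Since $H^*(N;\Q)$ is the $\Z/2$-invariant subspace of $H^*(\tilde N;\Q)$, vanishing of odd Betti numbers passes from $\tilde N$ to $N$. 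Thus I may assume $N$ is simply connected.

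With $N$ simply connected, $H^*(N;\Q)$ is $d$-periodic with $d \in \{1, 2, 4\}$. For $d = 1$, $N$ is a rational homology sphere of even dimension, and the conclusion is immediate. For $d \in \{2, 4\}$, iterating the periodicity isomorphism on odd degrees produces chains of equal odd Betti numbers: when $d = 2$ the single chain $b_1 = b_3 = \cdots = b_{n-1}$, and when $d = 4$ the two chains $b_1 = b_5 = \cdots = b_{n-3}$ and $b_3 = b_7 = \cdots = b_{n-1}$. These chains close all the way up to $b_{n-1}$ because the relevant odd indices $i$ satisfy both $0 < i \leq n - d$ and $0 \leq i < n - d$, placing multiplication by the period element in the bijective range (this uses $n \equiv 0 \bmod 4$). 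Simple-connectedness gives $b_1 = 0$, and Poincar\'e duality gives $b_{n-1} = b_1 = 0$; hence all odd Betti numbers of $N$ vanish.

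The main obstacle is the reduction to the simply-connected case, since \THM{THMstrongperiodicity} requires it but the corollary does not state it; one must carefully lift transversality and codimension to the orientable double cover. The rest is a routine case analysis on the three values of $d$ using periodicity and Poincar\'e duality.
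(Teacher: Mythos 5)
Your proposal is correct and follows essentially the same route as the paper: pass to the simply connected cover, lift the transverse totally geodesic pair, apply \THM{THMstrongperiodicity}(2), deduce $\bodd=0$ from periodicity together with Poincar\'e duality and $n\equiv 0\bmod 4$, and descend to $N$ via the transfer (the paper uses the universal Riemannian cover and the transfer theorem directly rather than invoking Synge, but this is the same argument). Your explicit chain analysis of the odd Betti numbers just fleshes out the step the paper states in one line.
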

\begin{proof}[Proof of Corollary \ref{CORtoPerThm}]
Let $\pi: \tilde{N} \to N$ denote the universal Riemannian covering. The submanifolds $\pi^{-1}(N_i)\subseteq N$ are transversely intersecting, totally geodesic, $(n-k_i)$-dimensional submanifolds of the closed, simply connected, positively curved manifold $\tilde{N}$. Since $2k_1 + 2k_2 \leq n$, \THM{THMstrongperiodicity} implies $H^*(\tilde{N};\Q)$ is 4-periodic.

Observe that 4-periodicity and Poincar\'e duality imply $\bodd(\tilde{N})=0$ since $\pi_1(\tilde{N})=0$ and $n\equiv 0 \bmod{4}$. Recall now that the transfer theorem implies that $H^*(N;\Q)$ is isomorphic to $H^*(\tilde{N};\Q)^{\pi_1(N)}$, the subring of invariant elements under the action of $\pi_1(N)$ on $H^*(\tilde{N}; \Q)$. Since $\bodd(\tilde{N}) = 0$, it follows that $\bodd(N) = 0$.
\end{proof}

We proceed now to the proof of \THM{THMstrongperiodicity}. Recall that we have a closed, simply connected Riemannian manifold $N^n$ with positive sectional curvature. We also have totally geodesic, transversely intersecting submanifolds $N_1^{n-k_1}$ and $N_2^{n-k_2}$ with $k_1\leq k_2$. As discussed above, we may assume that $N_1$, $N_2$, and $N_1\cap N_2$ are simply connected and therefore orientable.

Observe that we have $3k_1+k_2\leq n$ in all three cases. By the corollary to the connectedness theorem, $H^*(N_2;\Z)$ is $k_1$-periodic since the intersection is transverse. The bound on the codimensions implies $3k_1 \leq \dim(N_2)$, hence Theorem \ref{THM4-newer} implies $H^*(N_2;\Q)$ is $g$-periodic, where $g=\gcd(4,k_1)$. Since $N_1\cap N_2\embedded N_2$ is $\dim(N_1\cap N_2)$-connected, $H^*(N_1\cap N_2;\Q)$ is $g$-periodic as well. This concludes the proof of the third statement.

Assume now that $2k_1 + 2k_2 \leq n$. We claim that $H^*(N;\Q)$ is $g$-periodic. Let $j:N_2\embedded N$ be the inclusion map, and let $x'\in H^g(N_2;\Q)$ be an element inducing periodicity in $H^*(N_2;\Q)$. Because $n-2k_2 \geq 2k_1 \geq g$, the connectedness theorem implies $j^*:H^g(N;\Q) \to H^g(N_2;\Q)$ is an isomorphism. Let $x\in H^g(N;\Q)$ satisfy $j^*(x) = x'$. By the first part of the connectedness theorem, the inclusion $N_1\embedded N$ is $n-k_1-(k_1-1)$ connected, so there exists $e_1\in H^{k_1}(N;\Z)$ such that the maps $H^i(N;\Z) \to H^{i+k_1}(N;\Z)$ given by $y\mapsto e_1 y$ are isomorphisms for $k_1\leq i\leq n-2k_1$. Denote by $\overline{e}_1$ the image of $e_1$ under the natural map $H^{k_1}(N;\Z)\to H^{k_1}(N;\Q)$, and note that $\overline{e}_1$ satisfies the corresponding property with rational coefficients.

Note that $\overline{e}_1$ is some nonzero multiple of $x^{k_1/g}$. This follows since $(x')^{k_1/g}$ generates $H^{k_1}(N_2;\Q)$, and since $j$ is $n-2k_2+1 \geq 2k_1+1$ connected. Replacing $\overline{e}_1$ by any nonzero multiple preserves the multiplicative property of $\overline{e}_1$, so suppose without loss of generality that $\overline{e}_1 = x^{k_1/g}$. We prove in three cases the claim that multiplication by $x$ induces isomorphisms $H^i(N;\Q)\to H^{i+g}(N;\Q)$ for all $0<i<n-g$:
	\begin{itemize}
	 \item For $0<i<2k_1-g$, the claim follows since $N_2\embedded N$ is $n-2k_2$ connected and $n-2k_2+1\geq 2k_1+1$. For $n-2k_1<i<n-g$, one uses the cap product isomorphisms given by $x$ together with Poincar\'e duality to conclude the claim.
	 \item For $2k_1 < i < n-2k_1-g$, one chooses $l\geq 1$ such that $k_1 < i-lk_1 \leq 2k_1$ and uses the fact that multiplication by $\overline{e}_1$ induces isomorphisms in some middle degrees and that $\overline{e}_1$ and $x$ commute.
	  \item For $2k_1-g\leq i\leq 2k_1$ or $n-2k_1-g\leq i\leq n-2k_1$, one factors multiplication by $\overline{e}_1$ as multiplication by $x^{k_1/g - 1}$ followed by multiplication by $x$ and uses the previous two cases.
	\end{itemize}
Hence $x$ induces g-periodicity in $N$, as claimed.

Next let $g' = g$ if $k_1 + 3k_2 > n$ and $g' = \gcd(4,k_1,k_2)$ if $k_1 + 3k_2\leq n$. Our proof will be complete once we show that $N$, $N_1$, $N_2$, and $N_1\cap N_2$ are $g'$-periodic. First, we claim that $N$ is $g'$-periodic.

If $k_1 + 3k_2 > n$, then $H^*(N;\Q)$ is already $g'$-periodic. Suppose then that $k_1+3k_2\leq n$. By the corollary to the connectedness theorem, there exists $e_2\in H^{k_2}(N;\Q)$ such that the maps $H^i(N;\Q) \to H^{i+k_2}(N;\Q)$ induced by multiplication by $e_2$ are isomorphisms for $k_2 \leq i\leq n - 2k_2$. Given that $x$ and $e_2$ commute, we conclude that $e_2$ induces periodicity in $H^*(N;\Q)$. Indeed, suppose $0< i<k_2$. Choose $j\geq 0$ with $k_2\leq i + jg\leq n-2k_2$. Observe that $H^i(N;\Q)\to H^{i+k_2}(N;\Q)$, induced by multiplication by $e_2$, composed with the isomorphism $H^{i+k_2}(N;\Q) \to H^{i+k_2+jg}(N;\Q)$, induced by multiplication by $x^j$, is the same as the composition of isomorphisms
	\[H^i(N;\Q) \to H^{i+jg}(N;\Q) \to H^{i+jg+k_2}(N;\Q)\]
induced by multiplying in the other order. It follows that multiplication by $e_2$ induces isomorphisms $H^i(N;\Q)\to H^{i+k_2}(N;\Q)$ for $0<i<k_2$. Checking the other required periodicity conditions requires similar arguments. Hence we have that $H^*(N;\Q)$ is $g'$-periodic.

Using this periodicity, we now conclude that the rational cohomology rings of $N_1$, $N_2$, and $N_1\cap N_2$ are $g'$-periodic.

First, since $4k_1 \leq 2k_1 + 2k_2 \leq n$, $N_1\embedded N$ induces isomorphisms on cohomology up to half of the dimension of $N_2$. Using Poincar\'e duality, it follows from the fact that $N$ is rationally $g'$-periodic that $N_1$ is too.

Second, observe that $N_2\embedded N$ is $n-2k_2+1 \geq 2k_1 + 1$ periodic. Hence $H^*(N_2;\Q)$ is both $g$-periodic and $g'$-periodic up to degree $2k_1$ (which is at least twice $g$). Since $N_2$ is rationally $g$-periodic, it follows that $N_2$ is rationally $g'$-periodic by arguments similar to those above.

Finally, $N_1\cap N_2\embedded N_2$ is $\dim(N_1\cap N_2)$-connected, so $N_1\cap N_2$ is clearly $g'$-periodic as well. This concludes the proof of the \THM{THMstrongperiodicity}.

\bigskip
\section{Proof of \THM{chi>0}}\label{ProofTHMchi>0}
\bigskip

Before we begin, we state three well known theorems for easy reference:
\begin{theorem}[Berger]\label{Berger}
If $T$ is a torus acting by isometries on an compact, even-dimensional, positively curved manifold $M$, then the fixed-point set $M^T$ is nonempty.
\end{theorem}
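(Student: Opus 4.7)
The plan is to first reduce to the case $T = S^1$ by induction on the rank $r$, and then to handle the circle case by a second-variation argument applied to the Killing vector field generating the action.

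For the inductive reduction, suppose $r \geq 2$ and the conclusion holds for tori of smaller rank. Pick any circle subgroup $S^1 \subseteq T$ and apply the $r = 1$ case to produce a nonempty fixed-point set $M^{S^1}$. Since the isotropy representation of an isometric $S^1$-action at a fixed point decomposes into a trivial summand together with two-dimensional rotation summands, each component of $M^{S^1}$ is totally geodesic and of even codimension in $M$, hence itself a closed, even-dimensional (possibly $0$-dimensional), positively curved manifold. Connectedness of $T$ forces it to preserve each component, and the residual action factors through the rank-$(r-1)$ torus $T/S^1$. Applying the induction hypothesis to each positive-dimensional component --- and observing that an isolated component is automatically $T$-fixed --- produces a point of $M^T$.

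For the base case $T = S^1$, let $X$ be the Killing vector field generating the action, so that the zero set of $X$ is exactly $M^{S^1}$; assume for contradiction that $X$ is nowhere zero. Then $|X|^2$ attains a positive minimum at some $p \in M$. The Killing equation ensures that the endomorphism $A \colon T_pM \to T_pM$ defined by $A(Y) = \nabla_Y X$ is skew-adjoint. The first-order critical condition for $|X|^2$ at $p$ translates to $\langle A(Y), X(p)\rangle = 0$ for all $Y$, forcing $\mathrm{image}(A) \subseteq X(p)^\perp$; skew-adjointness then yields $A(X(p)) = 0$ and makes the restriction of $A$ to $X(p)^\perp$ a skew-adjoint endomorphism of an $(n-1)$-dimensional real inner-product space. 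Since $n$ is even, $n-1$ is odd, and every skew-adjoint endomorphism of an odd-dimensional real inner-product space has a nontrivial kernel. This produces a nonzero $Y_0 \in X(p)^\perp$ with $\nabla_{Y_0} X = 0$.

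The final ingredient is the second-order condition. Because $X$ restricts to a Jacobi field along every geodesic (a standard consequence of the Killing equation), a short computation at $p$ gives
\[
\tfrac{1}{2}\operatorname{Hess}(|X|^2)_p(Y_0, Y_0) \;=\; |\nabla_{Y_0} X|^2 \,-\, \langle R(X, Y_0) Y_0,\, X\rangle,
\]
and substituting $\nabla_{Y_0} X = 0$ together with the positivity of the sectional curvature of the plane spanned by the linearly independent vectors $X(p)$ and $Y_0$ makes this strictly negative, contradicting the positive semi-definiteness of $\operatorname{Hess}(|X|^2)$ at a minimum. The main obstacle is the parity argument: it is precisely the oddness of $\dim X(p)^\perp$ that guarantees the existence of $Y_0$, and this is the only place where the even-dimensionality hypothesis enters essentially. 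The remainder is routine Riemannian calculation together with the inductive reduction above.
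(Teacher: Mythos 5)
Your argument is correct, and it is the standard proof of Berger's theorem: the paper states this result without proof as a classical fact, so there is no internal argument to compare against. Both halves of your write-up are sound --- the inductive reduction to $S^1$ via the even codimension of fixed-point components and the connectedness of $T$, and the base case via the skew-adjointness of $Y\mapsto\nabla_Y X$ at a minimum of $|X|^2$, the parity argument on $X(p)^\perp$, and the second-variation/Jacobi-field identity $\tfrac12\operatorname{Hess}(|X|^2)(Y_0,Y_0)=|\nabla_{Y_0}X|^2-\langle R(X,Y_0)Y_0,X\rangle$. You also correctly isolate where even-dimensionality is used; nothing further is needed.
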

\begin{theorem}[Lefschetz]\label{Lefschetz}
If $T$ is a torus acting on a manifold $M$, then the Euler characteristic satisfies $\chi(M) = \chi(M^T)$.
\end{theorem}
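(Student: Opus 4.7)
The plan is to reduce to the classical Lefschetz fixed-point theorem by finding a single element $t \in T$ whose fixed-point set is exactly $M^T$ and which is homotopic to the identity. First I would verify that such a $t$ exists. At each point $p \in M^T$, the isotropy representation decomposes $T_pM$ into finitely many real weight spaces for $T$, indexed by characters $\chi \in \mathrm{Hom}(T,S^1)$. A proper closed subgroup $H \subsetneq T$ fixes directions at $p$ only when $H$ lies in the kernel of one of the (finitely many) weights appearing at a point of $M^T$. Since $M^T$ has finitely many components and each contributes only finitely many weights, a generic one-parameter subgroup of $T$ avoids all these kernels, and picking $t$ to generate a dense subgroup of this circle gives $M^t = M^T$.

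Second, since $T$ is connected, the path $s \mapsto sp$ from $\id_M$ to $t$ inside $T$ gives an explicit homotopy between $\id_M$ and $t$ as self-maps of $M$. Hence the Lefschetz number is homotopy-invariant and
\[
L(t) \;=\; \sum_i (-1)^i \mathrm{tr}\bigl(t^*\colon H^i(M;\Q)\to H^i(M;\Q)\bigr) \;=\; L(\id_M) \;=\; \chi(M).
\]

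Third, I would compute $L(t)$ geometrically as $\chi(M^t)$. In the smooth setting, one can take an equivariant tubular neighborhood of each component $F$ of $M^T = M^t$; the normal bundle $\nu F$ is $T$-equivariant and, by the choice of $t$, the differential of $t$ acts on each normal fiber without $+1$ eigenvalue, so the fixed set is nondegenerate in the sense of Lefschetz. The local Lefschetz index at $F$ is then the Euler characteristic of $F$ (for instance by the Atiyah--Bott formula, or by an equivariant tubular-neighborhood retraction combined with the product formula for $\chi$), giving
\[
L(t) \;=\; \sum_F \chi(F) \;=\; \chi(M^T).
\]
Combining the two identities yields $\chi(M) = \chi(M^T)$.

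The main obstacle is the third step: identifying $L(t)$ with $\chi(M^t)$ rigorously when $M$ is merely topological, or when $M^T$ has components of positive dimension. In the smooth category this is standard via the tubular neighborhood and local index calculation above; in the topological category one uses an equivariant triangulation making $M^T$ a subcomplex and computes $L(t)$ cellularly, where the sign $(-1)^{\dim \sigma}$ of a $t$-fixed cell $\sigma$ sums over $M^T$ to give $\chi(M^T)$. Either route reduces the statement to an equivariant normal-form argument around the fixed-point set.
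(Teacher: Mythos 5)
The paper offers no proof of this statement; it is listed among ``three well known theorems for easy reference,'' so there is no argument of the author's to compare yours against. Your proposal is the standard classical route (generic element, homotopy invariance of the Lefschetz number, local index computation), and that route does work, but as written your first step has a genuine gap.

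The gap: you choose the circle $S\subseteq T$ generically so as to avoid the kernels of the isotropy weights occurring at points of $M^T$, and then take $t$ dense in $S$, claiming $M^t=M^S=M^T$. The weight condition at $M^T$ only controls the fixed-point set of $S$ \emph{near} $M^T$ (it shows the component of $M^S$ through a component of $M^T$ is no larger than that component). It does not exclude components of $M^S$ that are disjoint from $M^T$: a point $p\notin M^T$ has a proper closed isotropy group $T_p$, and nothing in your construction prevents $S\subseteq T_p$. (For instance, for the standard $T^2$-action on $S^3\subseteq\C^2$ one has $M^{T^2}=\emptyset$, so the weight condition is vacuous, yet every coordinate circle has a nonempty fixed set.) The repair is easy and makes the weight analysis unnecessary: take $t$ to be a topological generator of $T$ itself (such elements are dense in any torus). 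Then $tp=p$ forces the closed subgroup $T_p$ to contain $\overline{\langle t\rangle}=T$, so $M^t=M^T$ on the nose. Alternatively, one can invoke the finiteness of orbit types for a compact Lie group acting on a compact manifold and choose $S$ avoiding all proper isotropy subgroups, not just the kernels of weights along $M^T$. One more small point in your third step: the local Lefschetz index of a nondegenerate fixed component $F$ is $\chi(F)$ times the sign of $\det(I-dt|_{\nu F})$, so you should note why this sign is $+1$ here; it is because $t$ lies in a torus, so $\nu F$ splits into two-planes rotated through nonzero angles and each block contributes $|1-e^{i\theta}|^2>0$. With those two repairs the argument is the standard and correct proof.
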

\begin{theorem}[Conner]\label{Conner} If $T$ is a torus acting on a manifold $P$, then the sum of the odd Betti numbers satisfies $\bodd(P^T) \leq \bodd(P)$.
\end{theorem}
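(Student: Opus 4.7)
The plan is to reduce to rank one and then invoke the Borel spectral sequence together with the localization theorem. First I would choose a circle subgroup $S^1 \subseteq T$ with $P^{S^1} = P^T$; such a circle comes from a generic element of the Lie algebra of $T$, since the isotropy representations at $T$-fixed points contribute only finitely many weight hyperplanes, and a generic one-parameter subgroup avoids their union. Thus it suffices to prove $\bodd(P^{S^1}) \leq \bodd(P)$ in the circle case (where we may assume $P$ is compact, as Betti numbers are otherwise undefined for our purposes).

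For this, consider the Borel fibration $P \embedded P_{S^1} = ES^1 \times_{S^1} P \to BS^1 = \C P^\infty$ and write $R = H^*(BS^1; \Q) = \Q[u]$ with $\deg u = 2$. The Leray--Serre spectral sequence $E_2^{p,q} = H^p(BS^1;\Q) \otimes H^q(P;\Q) \Rightarrow H^{p+q}_{S^1}(P;\Q)$ is a spectral sequence of finitely generated $R$-modules. Because $R$ is concentrated in even degrees, $E_r^{p,q} = 0$ whenever $p$ is odd, and consequently $d_r = 0$ for every odd $r$. For even $r$, the differential $d_r\colon E_r^{p,q} \to E_r^{p+r,\,q-r+1}$ reverses the parity of $q$. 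Splitting each page as $E_r = E_r^{\mathrm{ev}} \oplus E_r^{\mathrm{odd}}$ by the parity of $q$ gives two $R$-submodules, and $d_r$ restricts to a pair of $R$-linear maps $E_r^{\mathrm{ev}} \rightleftarrows E_r^{\mathrm{odd}}$ squaring to zero. A short diagram chase shows that taking cohomology of such a two-term complex reduces $\mathrm{rank}_R$ of \emph{both} pieces by the same amount, namely the sum of the ranks of the two components of $d_r$. Iterating from $E_2^{\mathrm{odd}} = R \otimes_\Q H^{\mathrm{odd}}(P;\Q)$, which has $R$-rank $\bodd(P)$, gives $\mathrm{rank}_R E_\infty^{\mathrm{odd}} \leq \bodd(P)$.

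To finish, I would apply the Borel localization theorem: inverting $u$ identifies $H^*_{S^1}(P;\Q)[u^{-1}]$ with $H^*(P^{S^1};\Q) \otimes \Q[u, u^{-1}]$, since $S^1$ acts trivially on $P^{S^1}$. Because $u$ has even degree, this preserves the $\Z/2$-grading and yields $\mathrm{rank}_R H^{\mathrm{odd}}_{S^1}(P;\Q) = \bodd(P^{S^1})$. The Serre filtration of $H^*_{S^1}(P;\Q)$ is an $R$-filtration, so $\mathrm{rank}_R H^{\mathrm{odd}}_{S^1}(P;\Q) = \mathrm{rank}_R E_\infty^{\mathrm{odd}}$ by additivity of rank in short exact sequences, and combining the two previous inequalities produces the desired bound $\bodd(P^{S^1}) \leq \bodd(P)$. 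The main obstacle is the rank bookkeeping across all pages of the spectral sequence; the key structural point making the argument work is precisely that the differentials decouple into a $\Z/2$-graded two-term complex at each even page, which forces the odd and even $R$-ranks to drop in lockstep and thereby preserves the parity-wise inequality all the way to $E_\infty$.
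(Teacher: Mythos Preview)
The paper does not prove this statement: it is quoted as a known theorem of Conner and used as a black box. So there is no ``paper's own proof'' to compare against.

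Your argument is essentially the standard proof and is correct in outline. Two small points are worth tightening. First, in the reduction to a circle, the condition $P^{S^1}=P^T$ requires $S^1$ not to lie in any \emph{proper isotropy subgroup} of the $T$-action, not merely to avoid the weight hyperplanes of the isotropy representation at $T$-fixed points; for a compact smooth $T$-manifold there are only finitely many isotropy groups, each a proper closed subgroup of $T$, so a generic rational one-parameter subgroup (hence a circle) avoids them all. Alternatively, one can simply induct on $\rank T$ using $P^T=(P^{S^1})^{T/S^1}$. Second, the passage from $E_\infty$ to $H^*_{S^1}(P;\Q)$ as $R$-modules is cleanest if you filter by fiber degree rather than base degree: setting $G^qH^n=F^{\,n-q}H^n$ gives an increasing, finite filtration of $H^*_{S^1}(P;\Q)$ by genuine $R$-submodules (since $u\cdot G^q\subseteq G^q$), with $G^q/G^{q-1}\cong E_\infty^{\,*\,,q}$; additivity of $\rank_R$ over this finite filtration then gives $\rank_R H^{\mathrm{odd}}_{S^1}(P;\Q)=\rank_R E_\infty^{\mathrm{odd}}$ as you want. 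With these clarifications the argument goes through.
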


We also pause to make a definition:
\begin{definition}\label{DEFdimker}
Let $T$ be a torus acting effectively on a manifold $M$. If the $T$-action restricts to an action on a submanifold $N\subseteq M$, let $\dim\ker\of{T|_N}$ denote the dimension of the kernel of the induced action on $N$. 
\end{definition}

Observe that if $\dim\ker\of{T|_N} = d$, then one can find a codimension $d$ subtorus $T'\subseteq T$ whose Lie algebra is complementary to the kernel of the induced $T$-action on $N$. It follows that $T'$ acts almost effectively on $N$.

We recall the setup of \THM{chi>0}. We are given a closed, positively curved Riemannian manifold $M$ with $\dim(M)\equiv 0 \bmod{4}$, and we have an effective, isometric action by a torus $T$ with $\dim(T)\geq 2\log_2(\dim M)$. By \THMS{Berger}{Lefschetz}, $M^T$ is nonempty and $\chi(M)=\chi(M^T)$, hence it suffices to show that $\bodd(F)=0$ for all components $F$ of $M^T$.

Fix a component $F$ of $M^T$. Our goal is to find a submanifold $P$ with $F\subseteq P\subseteq M$ and $\bodd(P)=0$ such that $T$ acts on $P$. It would follow that $F$ is a component of $P^T$, and hence \THM{Conner} would imply $\bodd(F) = 0$.

In order to find such a submanifold $P$, the first step is to set up a sort of induction argument. To do this, we look at our situation from the point of view of $F$. We consider all closed, totally geodesic submanifolds $N^n$ with $F\subseteq N^n\subseteq M$ such that

\begin{center}
$(*)~~\left\{\begin{tabular}{l}~\\~\end{tabular}\right.$\hspace{-.2in}
\begin{tabular}{l}
$n\equiv 0\bmod{4}$, and there exists a subtorus $T'\subseteq T$ acting\\ almost effectively on $N$ with $\dim(T') \geq 2\log_2(n)$.
\end{tabular}
\end{center}

Clearly $M$ satisfies property $(*)$ by the assumption in \THM{chi>0}, so the collection of submanifolds $N$ satisfying $(*)$ is nonempty. As it will simplify later arguments, we complete the induction setup by choosing a submanifold $N^n$ with minimal $n$ satisfying $F\subseteq N\subseteq M$ and property $(*)$.

We make a few remarks before continuing with the proof. First, observe that $F$ is a component of the fixed-point set $N^{T'}$. This follows since $F\subseteq N\subseteq M$ and $T'$ acts almost effectively on $N$. Since our goal is to show $\bodd(F) = 0$, and since we will do this by finding a submanifold $F\subseteq P\subseteq N$ on which $T'$ acts with $\bodd(P)=0$, we may forget about $M$ and $T$ and instead focus on $N$ and $T'$.

Second, since we are focused only on the action of $T'$ on $N$, we may divide $T'$ by its discrete ineffective kernel to assume without loss of generality that $T'$ acts effectively on $N$. Third, it will be convenient to adopt the following notation (recall that $F$ is fixed):
\begin{definition}~
\begin{enumerate}
\item For a submanifold $N'\subseteq N$, let $\cod(N')$ denote the codimension of $N'$ in $N$.
\item For a subgroup $H\subseteq T'$, let $F(H)$ denote the component of the fixed-point set $N^H$ of $H$ which contains $F$. If $H$ is generated by $\sigma\in T'$, we will write $F(\sigma)$ for $F(H)$.
\end{enumerate}
\end{definition}

Finally, the following lemma is a consequence of our choice of $N$. It is one of the two places where the logarithmic bound appears. We will refer to it frequently.
\begin{lemma}\label{InductionLemma}
For a non-trivial subgroup $H\subseteq T'$ with $\dim F(H) \equiv 0\bmod{4}$, we have $\dim F(H) > n/2^{d/2}$ where $d = \dk{T'|_{F(H)}}$.
\end{lemma}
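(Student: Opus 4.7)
The plan is to argue by contradiction using the minimality in the choice of $N^n$. First, $F(H)$ is a closed, totally geodesic submanifold of $N$ containing $F$, being a component of the fixed-point set of the isometric $H$-action on $N$. Because $H$ is non-trivial and $T'$ acts effectively on $N$, any non-trivial $\sigma \in H$ acts non-trivially on $N$, so $F(\sigma) \subsetneq N$; hence $F(H) \subseteq F(\sigma) \subsetneq N$ and in particular $\dim F(H) < n$.

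Next, since $T'$ is abelian it preserves $N^H$, and $T'$-orbits lie in the component $F(H)$ by connectedness, so $T'$ acts on $F(H)$. By the remark following \DEF{DEFdimker}, there exists a subtorus $T'' \subseteq T'$ of dimension $\dim T' - d$ acting almost effectively on $F(H)$; in particular $T''$ is a subtorus of the original $T$.

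Now suppose for contradiction that $\dim F(H) \leq n/2^{d/2}$. Taking $\log_2$ and using the bound $\dim T' \geq 2\log_2 n$ supplied by property $(*)$ for $N$, we obtain
\[
\dim T'' \;=\; \dim T' - d \;\geq\; 2\log_2 n - d \;\geq\; 2\log_2\of{\dim F(H)}.
\]
Combined with the hypothesis $\dim F(H) \equiv 0 \bmod 4$, this shows that $F(H)$ itself satisfies property $(*)$, with $T''$ playing the role of the acting subtorus of $T$. Since $F \subseteq F(H) \subseteq M$ and $\dim F(H) < n$, this contradicts the minimality of $n$. Therefore $\dim F(H) > n/2^{d/2}$.

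The argument is short because the real work was done in the construction of $N$: the inequality in the conclusion is tuned exactly so that its failure would exhibit a strictly smaller submanifold still satisfying $(*)$. The only step requiring a moment's thought is the production of a subtorus of $T$ acting almost effectively on $F(H)$ of the required dimension, but this is precisely what the remark after \DEF{DEFdimker} provides.
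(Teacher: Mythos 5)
Your proof is correct and is essentially identical to the paper's: both argue by contradiction, use the remark after \DEF{DEFdimker} to produce the codimension-$d$ subtorus $T''$ acting almost effectively on $F(H)$, verify $\dim T''\geq 2\log_2(\dim F(H))$ from $\dim F(H)\leq n/2^{d/2}$, and contradict the minimality of $N$. Your additional verification that $\dim F(H)<n$ via effectiveness of the $T'$-action matches the paper's closing remark.
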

\begin{proof}
Indeed, if $\dim F(H) \leq n/2^{d/2}$, then the remark following \DEF{DEFdimker} implies the existence of a codimension $d$ subtorus $T''\subseteq T'$ acting almost effectively on $F(H)$ with
	\[\dim(T'') = \dim(T') - d \geq 2\log_2(n) -  d \geq 2\log_2(\dim F(H)).\]
Hence $F(H)$ satisfies property $(*)$. But since the action of $T'$ on $N$ is effective, $\dim F(H) < n$, a contradiction to our choice of $N$.
\end{proof}

We now proceed with the second part of the proof, in which we study the array of intersections of fixed-point sets of involutions in $T'$. The strategy is to find $F\subseteq P\subseteq N$ such that $\dim(P) \equiv 0\bmod{4}$ and such that $P$ contains a pair of transversely intersecting submanifolds. This takes work and is the heart of the proof. Once we find this transverse intersection, we will apply \LEM{InductionLemma} to show that the two codimensions are small enough so that the periodicity theorem applies. \COR{CORtoPerThm} will then imply $\bodd(P) = 0$, as required.

To organize the required intersection, codimension, and symmetry data, we define an abstract graph which simplifies the picture while retaining this information:

\begin{definition}
Define a graph $\Gamma$ by declaring the following
\begin{itemize}
\item An involution $\sigma\in T'$ is in $\Gamma$ if $\cod F(\sigma) \equiv_4 0$ and $\dim\ker\of{T'|_{F(\sigma)}}\leq 1$, and
\item An edge exists between distinct $\sigma,\tau\in\Gamma$ if $F(\sigma)\cap F(\tau)$ is not transverse.
\end{itemize}
\end{definition}

We are ready to prove the existence of a submanifold $F\subseteq P\subseteq N$ on which $T'$ acts with $\bodd(P)=0$. As we will see, the $P$ we choose will be $N$ itself or $F(H)$ for some $H\subseteq T'$, so we will only need to show that $\bodd(P) = 0$. We separate the proof into five cases, according to the structure of $\Gamma$. 

\begin{lemma}[Case 1]
Let $r = \dim(T')$. If $\Gamma$ does not contain $r-1$ algebraically independent involutions, then $\bodd(N) = 0$.
\end{lemma}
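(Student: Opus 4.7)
The plan is to use the combinatorial defect of $\Gamma$ to produce two involutions $\sigma,\tau\in T'$ whose fixed-point components $F(\sigma),F(\tau)$ intersect transversely with total codimension at most $n/2$, so that \COR{CORtoPerThm} applied to $N$ yields $\bodd(N)=0$.

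First I would pass to the $2$-torsion. Identifying $T'[2]\cong\Z_2^r$, the hypothesis that $\Gamma$ contains no $r-1$ algebraically independent involutions means the $\Z_2$-span of $\Gamma$ has dimension at most $r-2$, so I may fix a $2$-dimensional $\Z_2$-subspace $V\subseteq T'[2]$ meeting this span only at $0$. Each nontrivial $\sigma_j\in V$ (for $j=1,2,3$) then lies outside $\Gamma$, so satisfies either (A) $\cod F(\sigma_j)\equiv 2\bmod 4$, or (B) $\cod F(\sigma_j)\equiv 0\bmod 4$ together with $\dk{T'|_{F(\sigma_j)}}\geq 2$; for type-(B) involutions, \LEM{InductionLemma} applied with $d\geq 2$ yields $\cod F(\sigma_j)<n/2$.

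Next comes a parity reduction. Fix a $T$-fixed point $p\in F$ and enumerate the nontrivial isotropy weights $\alpha_1,\ldots,\alpha_m$ of $T'$ on $\nu_pF$. Each reduction $\bar\alpha_i|_V$ is a $\Z_2$-linear form on $V\cong\Z_2^2$, zero or nonzero on exactly two of $\sigma_1,\sigma_2,\sigma_3$, so summing the identity $\cod F(\sigma_j)=2\#\{i:\bar\alpha_i(\sigma_j)\neq 0\}$ yields
\[
\cod F(\sigma_1)+\cod F(\sigma_2)+\cod F(\sigma_3)=4m_V,\qquad m_V:=\#\{i:\bar\alpha_i|_V\not\equiv 0\}.
\]
The right-hand side is divisible by $4$, but each type-(A) summand is $\equiv 2\bmod 4$; hence the number of type-(A) involutions among the $\sigma_j$ is even, and in particular at least one $\sigma_j$ is of type (B).

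Finally I would secure the transverse pair. If every weight restricts nontrivially to $V$ (so $m_V=m$), then any two distinct $\sigma_j,\sigma_k$ satisfy $F(\sigma_j)\cap F(\sigma_k)\subseteq N$ transversely at $p$, and the displayed identity together with the parity/type information above selects a pair whose codimensions sum to at most $n/2$. If instead some weight vanishes identically on $V$, then the joint fixed component $F(V)\supsetneq F$ has $\dk{T'|_{F(V)}}\geq 2$, so \LEM{InductionLemma} applied to the rank-$2$ subgroup $V$ forces $\dim F(V)>n/2$, and I would repeat the transversality analysis inside the totally geodesic $T'$-invariant submanifold $F(V)$. In either branch \COR{CORtoPerThm} gives $\bodd(N)=0$. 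The delicate step is this last one: upgrading the identity for the sum of three codimensions into an inequality $k_1+k_2\leq n/2$ for a specific transversely intersecting pair requires a case split on whether the number of type-(A) involutions is $0$ or $2$, combined with the codimension bound from \LEM{InductionLemma} for the selected type-(B) terms.
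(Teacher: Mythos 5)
There is a genuine gap, and it sits exactly where you flag the argument as delicate. Your transversality claim in the last step is backwards: if a weight $\alpha_i$ restricts nontrivially to $V\cong\Z_2^2$, then $\bar\alpha_i|_V$ vanishes on exactly one nonzero element of $V$ and is therefore nonzero on exactly \emph{two} of $\sigma_1,\sigma_2,\sigma_3$; for that pair $\{\sigma_j,\sigma_k\}$ the weight space of $\alpha_i$ meets neither $T_pF(\sigma_j)$ nor $T_pF(\sigma_k)$, so $F(\sigma_j)\cap F(\sigma_k)$ is \emph{not} transverse at $p$. Thus $m_V=m$ does not hand you a transverse pair; it says that every weight obstructs transversality for exactly one of the three pairs, and nothing in your setup forces any of the three obstruction classes to be empty. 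A second problem is the codimension bound for your type-(B) involutions: \LEM{InductionLemma} gives $\dim F(H)>n/2^{d/2}$, a bound that \emph{weakens} as $d$ grows, so $\dk{T'|_{F(\sigma_j)}}\geq 2$ yields $\cod F(\sigma_j)<n/2$ only in the case $d=2$; for $d=3$ you only get $\cod F(\sigma_j)<n\paren{1-2^{-3/2}}$, which exceeds $n/2$.

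The paper repairs both points with the device you invoke only in your fallback branch. It restricts to the hyperplane $\Z_2^{r-1}\subseteq\Z_2^r$ on which the composition of the isotropy representation with the sum map $\Z_2^{n/2}\to\Z_2$ vanishes, so every element has $\cod\equiv 0\bmod 4$ (this plays the role of your parity count), and then chooses an involution $\iota$ there, outside the span of $\Gamma$, of \emph{minimal} codimension. Minimality forces $\dk{T'|_{F(\iota)}}=2$ exactly, which both makes \LEM{InductionLemma} give $\cod F(\iota)<n/2$ and, crucially, produces an involution $\rho$ with $F(\iota)\subsetneq F(\rho)\subsetneq N$. The transverse pair is then $F(\rho)$ and $F(\rho\iota)$: since every weight nonzero on $\rho$ is nonzero on $\iota$, no weight is nonzero on both $\rho$ and $\rho\iota$, so they intersect transversely in $F(\iota)$ with total codimension $\cod F(\iota)<n/2$, and \COR{CORtoPerThm} applies to $N$. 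If you replace your ``select a transverse pair inside $V$'' step by this kernel argument applied to a type-(B) involution of minimal codimension, the proof goes through and essentially reproduces the paper's.
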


\begin{proof}
Let $0\leq j\leq r-2$ be maximal such that there exist $\iota_1,\ldots,\iota_j\in\Gamma$ generating a $\Z_2^j$. We wish to show that $\bodd(N)=0$.

Consider the isotropy $T'\embedded SO(T_x N)$ for some $x\in F$. Choose a basis of the tangent space so that the image of the $\Z_2^r \subseteq T'$ lies in a copy of $\Z_2^{n/2}\subseteq T^{n/2}\subseteq SO(T_x N)$. Let $\Z_2^{r-1}$ denote a subspace of the kernel of the composition $\Z_2^r \to \Z_2^{n/2} \to \Z_2$, where the last map is given by $(\tau_1,\ldots,\tau_{n/2}) \mapsto \sum \tau_i$. It follows that every $\sigma\in \Z_2^{r-1}$ has $\cod F(\sigma) \equiv 0\bmod{4}$.

Since $j\leq r-2$, there exists $\iota_{j+1}\in\Z_2^{r-1}\setminus\langle\iota_1,\ldots,\iota_j\rangle$. Choosing $\iota_{j+1}$ to have minimial $\cod F(\iota_{j+1})$ ensures that $\dim\ker\of{T'|_{F(\iota_{j+1})}}\leq 2$. Moreover, because $j$ is maximal, we cannot have $\iota_{j+1}\in\Gamma$. Hence $\dim\ker\of{T'|_{F(\iota_{j+1})}} = 2$. This implies the existence of an involution $\iota\in T'$ such that $F(\iota_{j+1})\subseteq F(\iota) \subseteq M$ with all inclusion strict. It follows that $F(\iota_{j+1})$ is the transverse intersection of $F(\iota)$ and $F(\iota\iota_{j+1})$. Moreover, \LEM{InductionLemma} implies $\dim F(\iota_{j+1}) > \frac{n}{2}$, which implies
	\[2\cod F(\iota) + 2\cod F(\iota\iota_1) = 2\cod F(\iota_{j+1}) < n.\]
By \COR{CORtoPerThm}, $\bodd(N) = 0$.
\end{proof}

\begin{lemma}[Case 2]
If there exist distinct $\sigma,\tau\in\Gamma$ such that $\dim\ker\of{T'|_{F(\langle\sigma,\tau\rangle)}} \geq 3$, then $\bodd(F(\tau)) = 0$.
\end{lemma}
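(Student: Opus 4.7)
The plan is to produce, inside $F(\tau)$, a transversely intersecting pair of totally geodesic submanifolds whose codimensions are small enough to invoke \COR{CORtoPerThm}. I will work in the isotropy representation of $T'$ on $T_xN$ at a point $x\in F$, which decomposes $T_xN$ into $T'$-invariant weight 2-planes $V_1,\ldots,V_{n/2}$ and encodes each involution $\eta\in T'$ as the subset $S_\eta\subseteq\{1,\ldots,n/2\}$ of indices on which $\eta$ acts by $-1$, so that $T_xF(\eta)=\bigoplus_{i\notin S_\eta}V_i$ and $\cod F(\eta)=2|S_\eta|$. Transversality of two fixed-point sets at $x$ translates into disjointness of their subsets, and the local constancy of eigenspace dimensions of commuting isometric involutions propagates transversality at $x$ to the full component containing $F$.

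Set $K:=\ker(T'|_{F(\sigma)\cap F(\tau)})$, whose identity component is by hypothesis a subtorus of rank at least $3$ containing both $\sigma$ and $\tau$. Every involution $\iota$ in its $2$-torsion fixes $T_x(F(\sigma)\cap F(\tau))=\bigoplus_{i\notin S_\sigma\cup S_\tau}V_i$, which forces $S_\iota\subseteq S_\sigma\cup S_\tau$. For any such $\iota$, a direct check in the $V_i$-decomposition shows that $F(\iota)\cap F(\tau)$ and $F(\sigma\iota)\cap F(\tau)$ intersect transversely inside $F(\tau)$ at $x$: the condition collapses to $S_\iota\setminus S_\tau\subseteq S_\sigma$, which is automatic. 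These are \emph{proper} submanifolds of $F(\tau)$ exactly when $A:=S_\iota\cap(S_\sigma\setminus S_\tau)$ is a proper nonempty subset of $S_\sigma\setminus S_\tau$, in which case their codimensions in $F(\tau)$ are $2|A|$ and $2(|S_\sigma\setminus S_\tau|-|A|)$, summing to $2|S_\sigma\setminus S_\tau|\leq\cod F(\sigma)$.

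The main obstacle, and the step where $\dim K\geq 3$ is used in full, is producing such an $\iota$. The case $S_\sigma\subseteq S_\tau$ would force $K\subseteq\ker(T'|_{F(\tau)})$, contradicting $\dim K\geq 3>1\geq\dim\ker(T'|_{F(\tau)})$, so $S_\sigma\setminus S_\tau\neq\emptyset$. Now consider the support-restriction map sending the $2$-torsion of $K$ to $\Z_2^{|S_\sigma\setminus S_\tau|}$; its image contains both the full set (image of $\sigma$) and the empty set (image of $\tau$), and its kernel consists of involutions acting trivially on $F(\tau)$. A rank-counting argument controlled by $\dim\ker(T'|_{F(\tau)})\leq 1$ shows the image has rank at least $2$. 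Any rank-$\geq 2$ subgroup of $\Z_2^a$ containing the full set and the empty set (in particular forcing $a:=|S_\sigma\setminus S_\tau|\geq 2$) admits a complementary pair of nontrivial proper subsets, and lifting this pair produces $\iota$ and $\sigma\iota$ in $K$ with $A$ as required. This is the most delicate step, since one must ensure that possible discrete contributions to the $2$-torsion of $\ker(T'|_{F(\tau)})$ beyond its Lie algebra dimension do not inflate the kernel of the projection.

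To close, apply \LEM{InductionLemma} to both $\sigma,\tau\in\Gamma$ (their fixed-point sets satisfy $\dim\equiv 0\bmod 4$ with kernel dimension $\leq 1$) to obtain $\dim F(\sigma),\dim F(\tau)>n/\sqrt{2}$. Then $2(k_1+k_2)\leq 2\cod F(\sigma)<(2-\sqrt{2})n<n/\sqrt{2}<\dim F(\tau)$, and since $\dim F(\tau)\equiv 0\bmod 4$, the hypotheses of \COR{CORtoPerThm} are satisfied for the transverse pair $F(\iota)\cap F(\tau),\, F(\sigma\iota)\cap F(\tau)$ inside $F(\tau)$; the corollary yields $\bodd(F(\tau))=0$.
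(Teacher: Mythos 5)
Your overall strategy coincides with the paper's: use $\dk{T'|_{F(H)}}\geq 3$ against $\dk{T'|_{F(\tau)}}\leq 1$ to produce an intermediate involution $\iota$ with $F(H)\subsetneq F(\iota|_{F(\tau)})\subsetneq F(\tau)$, exhibit $F(H)$ as the transverse intersection of $F(\iota|_{F(\tau)})$ and $F(\sigma\iota|_{F(\tau)})$ inside $F(\tau)$ with codimensions summing to at most $\cod F(\sigma)$, and close with \LEM{InductionLemma} and \COR{CORtoPerThm}; your numerical estimate $2\cod F(\sigma)<(2-\sqrt{2})n<n/\sqrt{2}<\dim F(\tau)$ is correct. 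The gap sits exactly at the step you flag and do not close: the assertion that the image of the $2$-torsion of $K$ in $\Z_2^{|S_\sigma\setminus S_\tau|}$ has rank at least $2$. The kernel of your support-restriction map is the $2$-torsion of $K\cap\ker(T'|_{F(\tau)})$, and the rank of the $2$-torsion of a closed subgroup is \emph{not} controlled by its dimension: a $1$-dimensional closed subgroup of a rank-$3$ torus can contain a full $\Z_2^2$ of the ambient $2$-torsion (e.g. $\{(\theta,\theta+c/2,-2\theta)\}\subseteq T^3$, arising from weights such as $(2,0,1)$ and $(0,2,1)$). So $\dk{T'|_{F(\tau)}}\leq 1$ does not give ``rank of image $\geq 3-1$,'' and a priori the image of the honest involutions of $T'$ could be exactly $\{\emptyset,\,S_\sigma\setminus S_\tau\}$, in which case no $\iota\in T'$ of order $2$ yields a proper nonempty $A$.

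The paper avoids this by not insisting that $\iota$ be an involution of $T'$. One chooses a $2$-torus inside the identity component of $\ker\of{T'|_{F(H)}}$ complementary to $\ker\of{T'|_{F(\tau)}}$, so that it fixes $F(H)$ and acts almost effectively on $F(\tau)$, and then divides by the finite kernel of its action on $F(\tau)$. For the resulting \emph{effective} $2$-torus action, the mod-$2$ reductions of the weights on the normal space of $F(H)$ in $F(\tau)$ must span $(\Z_2^2)^*$ (otherwise a nontrivial involution acts trivially), and this is precisely what produces an involutive isometry $\iota$ of $F(\tau)$ whose fixed subspace of that normal space is proper and nonzero. Such an $\iota$ may only be the restriction to $F(\tau)$ of an element of order $4$ in $T'$, but its fixed-point components are still closed and totally geodesic, which is all \COR{CORtoPerThm} requires. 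If you either pass to this quotient torus, or enlarge ``$2$-torsion of $K$'' to ``elements of $K$ whose square acts trivially on $F(\tau)$,'' your rank count becomes valid and the rest of your argument goes through unchanged. (A minor additional correction: $\sigma$ and $\tau$ need not lie in the identity component of $K$, only in $K$ itself.)
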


\begin{proof}
Let $H=\langle\sigma,\tau\rangle$. Since $\dim\ker\of{T'|_{F(H)}} \geq 3$ and $\dim\ker\of{T'|_{F(\tau)}}\leq 1$, there exists a 2-torus that acts almost effectively on $F(\tau)$ and fixes $F(H)$. Restricting our attention to the action on $F(\tau)$, we may divide by the kernel of this action to conclude that a 2-torus acts effectively on $F(\tau)$ and fixes $F(H)$. This implies the existence of an involution $\iota$ such that $F(H) \subseteq F(\iota) \subseteq F(\tau)$ with all inclusions strict. Since $F(H)$ is the $F$-component of the fixed-point set of the $\sigma$-action on $F(\tau)$, it follows that $F(H)$ is the transverse intersection inside $F(\tau)$ of $F\of{\iota|_{F(\tau)}}$ and $F\of{\iota\sigma|_{F(\tau)}}$. 

\LEM{InductionLemma} implies $\dim F(\tau) \geq n/\sqrt{2} > 2n/3$ and similarly for $\dim F(\sigma)$. Hence
	\[\cod_{F(\tau)} F(\iota|_{F(\tau)}) + \cod_{F(\tau)} F(\iota\sigma|_{F(\tau)})
		= \cod_{F(\tau)} F(\sigma|_{F(\tau)}) \leq \cod F(\sigma) < \frac{1}{2}\dim F(\tau).\]
\COR{CORtoPerThm}, together with the observation
	\[\dim F(\tau) = n - \cod F(\tau) \equiv 0\bmod{4},\]
therefore implies $\bodd(F(\tau))=0$.
\end{proof}

\begin{lemma}[Case 3]
If there exist distinct $\sigma,\tau\in\Gamma$ with no edge connecting them, then $\bodd(F(\tau))=0$.
\end{lemma}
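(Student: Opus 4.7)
The plan is to show $\bodd(F(\tau))=0$ by splitting on $d_{12} := \dk{T'|_{F(\langle\sigma,\tau\rangle)}}$. First I observe that the no-edge hypothesis gives $F(\sigma)\cap F(\tau) = F(\langle\sigma,\tau\rangle)$ as a transverse intersection in $N$, so its codimension is $k_1+k_2$ where $k_i = \cod F(\sigma), \cod F(\tau)$. Since $\sigma,\tau\in\Gamma$ we have $k_1,k_2\equiv 0 \bmod 4$ and $n\equiv 0\bmod 4$, so $\dim F(\langle\sigma,\tau\rangle)\equiv 0\bmod 4$ and \LEM{InductionLemma} is available for $\langle\sigma,\tau\rangle$.

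If $d_{12}\geq 3$, then the pair $\sigma,\tau$ satisfies the hypothesis of Case 2, and that lemma already gives $\bodd(F(\tau))=0$.

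Otherwise $d_{12}\leq 2$. \LEM{InductionLemma} applied to $\langle\sigma,\tau\rangle$ gives $\dim F(\langle\sigma,\tau\rangle) > n/2^{d_{12}/2} \geq n/2$, so $2k_1+2k_2<n$. Applied to $\langle\sigma\rangle$ and $\langle\tau\rangle$ separately (with the $d$ forced to equal $1$ in each, since $\sigma,\tau\in\Gamma$ give $d\leq 1$ and $d=0$ would yield the impossible $\dim F>n$), it also gives $k_1,k_2 < n(1-1/\sqrt 2) < n/3$. Under these codimension bounds, Wilking's connectedness theorem provides $\pi_1$-isomorphisms for the inclusions $F(\sigma), F(\tau)\embedded N$, so their preimages in the universal cover $\tilde N$ of $N$ are connected and simply connected, yielding transversely intersecting totally geodesic submanifolds $\widetilde{F(\sigma)}, \widetilde{F(\tau)}$ in the closed, simply connected, positively curved $\tilde N$. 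Labeling so that $k_1\leq k_2$, \THM{THMstrongperiodicity}(2) then provides $\gcd(4,k_1)=4$-periodicity of $H^*(\widetilde{F(\tau)};\Q)$. Since $\widetilde{F(\tau)}$ is closed, simply connected, of dimension $\equiv 0\bmod 4$, and $4$-periodic, the classification stated in the introduction forces it to have the rational cohomology of $S^m$, $\C P^{m/2}$, or $\HH P^{m/4}$, all with vanishing odd Betti numbers. The transfer theorem (valid because $\pi_1(F(\tau))\hookrightarrow\pi_1(N)$ is finite by Myers) then yields $\bodd(F(\tau))\leq\bodd(\widetilde{F(\tau)})=0$.

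The main obstacle I anticipate is the bookkeeping in the $d_{12}\leq 2$ subcase---confirming that the preimages in $\tilde N$ are exactly the universal covers of $F(\sigma)$ and $F(\tau)$ (connected and simply connected), justifying the transversality of the lifts, and cleanly invoking transfer to bring vanishing of odd Betti numbers down from $\widetilde{F(\tau)}$ to $F(\tau)$. This essentially parallels the universal-cover argument in the proof of \COR{CORtoPerThm}, adapted so that the periodicity conclusion lands on $F(\tau)$ rather than on $N$.
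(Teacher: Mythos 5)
Your proof is correct and follows essentially the same route as the paper: reduce to $\dk{T'|_{F(\langle\sigma,\tau\rangle)}}\leq 2$ via Case 2, apply \LEM{InductionLemma} to the $F$-component of the transverse intersection to get $2\cod F(\sigma)+2\cod F(\tau)<n$, and feed this into the periodicity machinery. The only difference is the last step: the paper simply cites \COR{CORtoPerThm} (whose stated conclusion is $\bodd$ of the \emph{ambient} manifold, which also suffices for the overall argument since $N$ is an admissible choice of $P$), whereas you unpack that corollary and invoke \THM{THMstrongperiodicity}(2) together with the covering/transfer argument so that the $4$-periodicity, and hence the vanishing of odd Betti numbers, lands on $F(\tau)$ itself --- arguably the more faithful derivation of the lemma as literally stated.
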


\begin{proof}
Let $H=\langle\sigma,\tau\rangle$. By the proof in Case 2, we may assume that $\dim\ker\of{T'|_{F(H)}}\leq 2$. By \LEM{InductionLemma}, therefore, $\dim F(H) > n/2$. The assumption that no edge exists between $\sigma$ and $\tau$ means that $F(\sigma)\cap F(\tau)$ is transverse. Since
	\[2\cod F(\sigma) + 2\cod F(\tau) = 2\cod F(H) < n,\]
\COR{CORtoPerThm} implies $\bodd(F(\tau))=0$.
\end{proof}

\begin{lemma}[Case 4]
If there exist distinct $\sigma,\tau\in\Gamma$ such that $\sigma\tau\not\in\Gamma$, then $\bodd(F(\tau))=0$ or $\bodd(N)=0$.
\end{lemma}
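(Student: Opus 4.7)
The plan is to analyze the isotropy representation of $T'$ at a point $x\in F$ and split into two cases depending on which pair of fixed-point sets yields a transverse intersection with controlled codimension. Decompose the normal space $\nu_x F$ into real $2$-dimensional $T'$-weight subspaces $V_i$ with weights $\chi_i$, and let $a$, $b$, $c$ respectively count the indices $i$ on which $(\sigma,\tau)|_{V_i}$ equals $(-,+)$, $(+,-)$, $(-,-)$. Then $\cod F(\sigma)=2(a+c)$, $\cod F(\tau)=2(b+c)$, $\cod F(\sigma\tau)=2(a+b)$, and $\cod F(\langle\sigma,\tau\rangle)=2(a+b+c)$. Because $\sigma,\tau\in\Gamma$ force $a+c$ and $b+c$ to be even, so is $a+b$, giving $\cod F(\sigma\tau)\equiv 0\bmod 4$; hence $\sigma\tau\notin\Gamma$ must come from $\dim\ker(T'|_{F(\sigma\tau)})\geq 2$. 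From Cases 2 and 3 I may also assume $\dim\ker(T'|_{F(\langle\sigma,\tau\rangle)})=2$ and $c\geq 1$. Applying \LEM{InductionLemma} to $\sigma$ and to $\tau$ (each with $d=1$) yields $\dim F(\sigma),\dim F(\tau)>n/\sqrt 2$, so $a+c, b+c<\tfrac{n}{2}(1-\tfrac{1}{\sqrt 2})$.

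First, if $a=0$ (the case $b=0$ being symmetric), then $\cod F(\sigma)+\cod F(\sigma\tau)=2c+2b=\cod F(\langle\sigma,\tau\rangle)$, so $F(\sigma)$ and $F(\sigma\tau)$ intersect transversely in $N$ along $F(\langle\sigma,\tau\rangle)$. The codimension estimate $2\cod F(\sigma)+2\cod F(\sigma\tau)=4(b+c)<2n(1-1/\sqrt 2)<n$ together with $n\equiv 0\bmod 4$ lets me apply \COR{CORtoPerThm} to $N$ to conclude $\bodd(N)=0$.

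If instead $a,b\geq 1$, I aim to produce an involution $\iota\in T'$ that fixes $F(\sigma\tau)$ pointwise and acts as $-1$ on some but not all of the $V_i$ of type $(-,+)$. Writing $a_\pm$ for the number of $(-,+)$-type $V_i$ on which $\iota$ acts as $\pm 1$, the totally geodesic submanifolds $F(\iota|_{F(\tau)})$ and $F(\iota\sigma|_{F(\tau)})$ of $F(\tau)$ then have codimensions $2a_-$ and $2a_+$, and a tangent-space check identifies their transverse intersection with $F(\langle\sigma,\tau\rangle)$. Provided both $a_\pm\geq 1$, the estimate $2(2a_-)+2(2a_+)=4a\leq 2\cod F(\sigma)<n(2-\sqrt 2)<n/\sqrt 2<\dim F(\tau)$, together with $\dim F(\tau)\equiv 0\bmod 4$, places me within the hypothesis of \COR{CORtoPerThm} applied to $F(\tau)$ and yields $\bodd(F(\tau))=0$. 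The existence of $\iota$ is by $\Z_2$-linear algebra: the involution subgroup $\Z_2^r\subseteq T'$ meets $\ker(T'|_{F(\sigma\tau)})$ in a subspace $V$ of $\Z_2$-rank at least $2$, and evaluating a putative $\Z_2$-dependence of $\chi_i|_V$ (for $i$ of type $(-,+)$) on the $\chi_j|_V$ (for $j$ of type $(+,+)$ or $(-,-)$) against $\sigma$ and against $\tau$ yields an immediate contradiction, so every such $\chi_i|_V$ is a non-trivial functional.

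The delicate step will be to arrange $a_-,a_+\geq 1$ simultaneously, which requires the restrictions $\chi_i|_V$ (for $i$ of type $(-,+)$) not all to coincide as non-trivial functionals on $V$. In the degenerate sub-case where they do all coincide, every admissible $\iota$ forces $a_- = a$ and $a_+ = 0$, so $F(\iota\sigma|_{F(\tau)})=F(\tau)$ trivially and the proposed transverse pair collapses; I would treat this separately by exploiting the codimension-$2a$ totally geodesic inclusion $F(\langle\sigma,\tau\rangle)\embedded F(\tau)$ and the high connectivity furnished by \THM{THMconnectednesstheorem} to produce a periodicity class in $H^*(F(\tau);\Z)$, and then combining with \THM{THM4-newer} to promote this to $\gcd(4,2a)$-periodicity of $H^*(F(\tau);\Q)$, whence Poincar\'e duality and $\dim F(\tau)\equiv 0\bmod 4$ again force $\bodd(F(\tau))=0$.
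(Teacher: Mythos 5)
Your weight-space bookkeeping, the mod-4 congruence forcing $\dk{T'|_{F(\sigma\tau)}}\geq 2$, the reductions borrowed from Cases 2 and 3, the $a=0$ sub-case, and the non-degenerate construction of $\iota$ inside $F(\tau)$ are all sound (although your stated reason that each $\chi_i|_V$ is nontrivial is garbled: $\sigma$ and $\tau$ need not lie in $V$; the correct and simpler reason is that $\sigma\tau\in V$ and $\chi_i(\sigma\tau)=-1$ for every $(-,+)$-type $i$). The genuine gap is the degenerate sub-case, which is unavoidable --- it always occurs when $a=1$, for example. There you propose to extract a periodicity class in $H^*(F(\tau);\Z)$ from the single totally geodesic inclusion $F(\langle\sigma,\tau\rangle)\embedded F(\tau)$ of codimension $2a$ and feed it into \THM{THM4-newer}. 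But part (1) of \THM{THMconnectednesstheorem} only makes this inclusion $(\dim F(\tau)-4a+1)$-connected, so the resulting class $e\in H^{2a}(F(\tau);\Z)$ multiplies isomorphically only in the middle range of degrees ($l=2a-1$ in the notation of the theorem following the connectedness theorem); it does not give periodicity up to degree $\dim F(\tau)$ in the sense required by \THM{THM4-newer}, whose hypothesis demands surjectivity and injectivity starting from degree $0$. Filling in the low and high degrees is precisely what the second, transversely intersecting submanifold is for throughout the paper, so this step cannot be repaired by connectivity of one inclusion alone.

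The repair is available inside your own setup and is in fact the paper's entire (uniform, case-free) proof. Since $\dk{T'|_{F(\sigma\tau)}}=2$ and $\rank V\geq 2$, the kernel of the single nontrivial functional to which all the $\chi_i|_V$ degenerate contains a nontrivial involution $\iota_0$; it fixes $F(\sigma\tau)$ pointwise and acts trivially on every $(-,+)$-type $V_i$, so $F(\sigma\tau)\subsetneq F(\iota_0)\subsetneq N$ with both inclusions strict. Because $\iota_0$ is trivial on $T_xF(\sigma\tau)$, the pair $(\iota_0,\iota_0\sigma\tau)$ has no $(-,-)$-type weight spaces, so $F(\iota_0)$ and $F(\iota_0\sigma\tau)$ intersect transversely in $N$ along $F(\sigma\tau)$ with $\cod F(\iota_0)+\cod F(\iota_0\sigma\tau)=\cod F(\sigma\tau)$. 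Since $\cod F(\sigma\tau)\equiv 0\bmod 4$ and $\dk{T'|_{F(\sigma\tau)}}=2$, \LEM{InductionLemma} gives $\dim F(\sigma\tau)>n/2$, hence $2\cod F(\iota_0)+2\cod F(\iota_0\sigma\tau)=2\cod F(\sigma\tau)<n$ and \COR{CORtoPerThm} yields $\bodd(N)=0$. Note that this one argument, applied with any involution $\rho$ in the two-dimensional kernel satisfying $F(\sigma\tau)\subsetneq F(\rho)\subsetneq N$, subsumes all three of your sub-cases; your route through $F(\tau)$ is correct where it is carried out, but it is longer and leaves exactly this hole.
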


\begin{proof}
It follows from the isotropy representation that
	\[\cod F(\sigma\tau) \equiv \cod F(\sigma) + \cod F(\tau)\]
modulo 4, so $\sigma\tau\not\in\Gamma$ implies $\dim\ker\of{T'|_{F(\sigma\tau)}}\geq 2$. On the other hand, the fact that $F(H) \subseteq F(\sigma\tau)$ implies
	\[\dim\ker\of{T'|_{F(\sigma\tau)}} \leq \dim\ker\of{T'|_{F(H)}},\]
and the proof in Case 2 implies that we may assume
	$\dk{T'|_{F(H)}} \leq 2,$
hence we have $\dim\ker\of{T'|_{F(\sigma\tau)}}=2$.

This implies the existence of an involution $\rho\in T'$ satsifying $F(\sigma\tau)\subseteq F(\rho)\subseteq M$ with all inclusions strict, which in turn implies $F(\sigma\tau)$ is the transverse intersection in $M$ of $F(\rho)$ and $F(\rho\sigma\tau)$. Additionally $\dim\ker\of{T'|_{F(\sigma\tau)}}=2$ implies $\dim F(\sigma\tau) > n/2$ by \LEM{InductionLemma}. Hence
	\[2\cod F(\rho) + 2\cod F(\rho\sigma\tau) = 2\cod F(\sigma\tau) < n,\]
so the periodicity theorem implies $\bodd(N) = 0$.
\end{proof}

We pause before considering the last case. By the proof of Cases 3 and 4, we may assume that $\Gamma$ is a complete graph and that the set of vertices in $\Gamma$ is closed under multiplication. Adding the proof of Case 1, we may assume, in fact, that $\Gamma$ is a complete graph on $\Z_2^m$ for some $m\geq \dim(T')-1$. The last case considers this possibility.

We make a minor modification to our definition of $F(\rho)$. If $\rho\in T'$ and $H\subseteq T'$, then $\rho$ acts on $F(H)$. Let $F(\rho|_{F(H)})$ denote the $F$-component of the fixed-point set of the $\rho$-action on $F(H)$. Observe that $F(\rho|_{F(H)}) = F(H')$ where $H'$ is the subgroup generated by $\rho$ and $H$.

\begin{lemma}[Case 5]
Suppose $\Gamma$ is a complete graph on $\Z_2^m$ with $m\geq \dim(T')-1$. There exists $H\subseteq T'$ such that $\bodd(F(H)) = 0$.
\end{lemma}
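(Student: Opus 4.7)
\medskip
The plan is to exhibit a submanifold $F(\sigma)$ for some $\sigma\in\Gamma$ inside which a pair of totally geodesic submanifolds meets transversely with codimensions satisfying the bound of \COR{CORtoPerThm}; setting $H=\langle\sigma\rangle$ will then give $\bodd(F(H))=0$. The structural surplus to be exploited is that $\Gamma\cong\Z_2^m$ contains $2^m\geq 2^{r-1}\geq n^2/2$ elements, while the isotropy representation of $\Gamma$ at a fixed point $x\in F$ decomposes into at most $n/2$ nontrivial isotypic components.

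To set up, I would decompose $T_xN=\bigoplus_{\alpha\in(\Z_2^m)^*}V_\alpha$ as a $\Gamma$-module and define the support $A=\{\alpha\neq 0:V_\alpha\neq 0\}$, which satisfies $|A|\leq n/2$ since each nonzero $V_\alpha$ has real dimension at least $2$. A direct tangent-space calculation shows that $F(\tau_1|_{F(\sigma)})$ and $F(\tau_2|_{F(\sigma)})$ meet transversely inside $F(\sigma)$ if and only if no $\alpha\in A$ satisfies $\alpha(\sigma)=1$ together with $\alpha(\tau_1)=\alpha(\tau_2)=-1$. The complete-graph hypothesis of Case~5 provides, for every pair $\tau_1,\tau_2\in\Gamma\setminus\{0\}$, some $\alpha\in A$ with $\alpha(\tau_1)=\alpha(\tau_2)=-1$; the goal is to choose $\sigma$ so that every such $\alpha$ further satisfies $\alpha(\sigma)=-1$.

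I would locate the triple $(\sigma,\tau_1,\tau_2)$ by pigeonhole. Fixing any $\tau_1,\tau_2$, the set $S(\tau_1,\tau_2)=\{\alpha\in A:\alpha(\tau_1)=\alpha(\tau_2)=-1\}$ has size at most $|A|\leq n/2$, and the $\sigma\in\Gamma$ that violate $\alpha(\sigma)=-1$ for some $\alpha\in S$ form a union of at most $|A|$ affine half-spaces of size $2^{m-1}$ each, totaling at most $(n/2)\cdot 2^{m-1}$; since $2^m\geq n^2/2$, this is a strict subset of $\Gamma$ for $n$ large enough, so valid $\sigma$ abound. Given such a triple, \LEM{InductionLemma} bounds the dimensions---$\dim F(\sigma)>n/\sqrt{2}$, $\dim F(\langle\sigma,\tau_i\rangle)>n/2$, and, provided the triple kernel is controlled, $\dim F(\langle\sigma,\tau_1,\tau_2\rangle)>n/2$---yielding $2(k_1+k_2)\leq\dim F(\sigma)$ as required by \COR{CORtoPerThm}, whence $\bodd(F(\sigma))=0$.

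The principal obstacle is the codimension verification, which reduces to bounding the triple kernel dimension $\dk{T'|_{F(\langle\sigma,\tau_1,\tau_2\rangle)}}\leq 2$. The pair bound $\leq 2$ is inherited from the reduction in Case~2, but the triple bound is not automatic; with only a bound of $3$ one loses the strict inequality needed from \LEM{InductionLemma}. The pigeonhole step must therefore be strengthened to simultaneously enforce this kernel condition, exploiting that $\Gamma$ is closed under multiplication so that $\sigma\tau_1$, $\sigma\tau_2$, $\tau_1\tau_2$, and $\sigma\tau_1\tau_2$ all lie in $\Gamma$ and supply their own kernel and codimension-mod-$4$ constraints. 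In a residual sub-case where the combinatorics of $A$ forces an obstruction, one would fall back to an argument in the spirit of Cases~2--4, locating an alternative $H$---for instance generated by one of the products above---for which $\bodd(F(H))=0$ follows directly.
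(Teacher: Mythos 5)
Your approach is genuinely different from the paper's, but it has two gaps, one of which is fatal as stated. The pigeonhole step does not work: the ``bad'' set $\bigcup_{\alpha\in S}\{\sigma:\alpha(\sigma)=1\}$ is a union of at most $n/2$ index-two subgroups $\ker\alpha\subseteq\Z_2^m$, so your union bound gives $(n/2)\cdot 2^{m-1}=(n/4)\cdot 2^m$, which already exceeds $|\Gamma|=2^m$ once $n\geq 4$; the hypothesis $2^m\geq n^2/2$ makes the bound worse, not better, since both sides scale with $2^m$. Worse, no cardinality argument can succeed here: three hyperplanes $\ker\alpha_1\cup\ker\alpha_2\cup\ker(\alpha_1+\alpha_2)$ already cover all of $\Z_2^m$, so the existence of $\sigma$ with $\alpha(\sigma)=-1$ for all $\alpha\in S(\tau_1,\tau_2)$ is a linear-algebraic solvability condition (no odd-size subset of $S$ may sum to zero) that generically fails. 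Separately, you correctly identify that the codimension verification requires $\dk{T'|_{F(\langle\sigma,\tau_1,\tau_2\rangle)}}\leq 2$, but you leave this open; and the proposed fallback ``in the spirit of Cases 2--4'' is not available, because by the time Case 5 is reached those cases have already been exhausted ($\Gamma$ is a complete graph closed under multiplication), which is precisely why a new mechanism is needed.

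The paper avoids both problems by not searching for a single good triple at all. It builds a nested chain $R_0=N\supseteq R_1\supseteq\cdots$ by repeatedly choosing $\rho_{i+1}$ of \emph{maximal} codimension $k_{i+1}$ in $R_i$ within a shrinking chain of subgroups $\Z_2^{m-i}$; maximality applied to both $\rho_{h+1}$ and $\rho_h\rho_{h+1}$ forces $k_h\geq 2k_{h+1}$, so after $l=\floor{(m+1)/2}>\log_2(n)-1$ steps some $k_j=0$ (this is where the logarithmic rank bound enters). Effectiveness then guarantees $\rho_j$ acts nontrivially on some $R_{i-1}$, and after multiplying $\rho_j$ by elements of $\langle\rho_1,\ldots,\rho_{j-1}\rangle$ one arranges $0<l_i\leq k_i/2$, producing a transverse intersection in $R_{i-1}$ with codimensions $l_i$ and $k_i-l_i$ satisfying $2k_i\leq n/2^{i-1}\leq\dim R_{i-1}$ by the telescoping halving. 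If you want to salvage your character-theoretic setup, the halving trick is the ingredient to import: the smallness of the codimensions must be \emph{manufactured} by an extremal choice inside a descending chain, not found by counting.
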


\begin{proof}
Set $l = \floor{\frac{m+1}{2}}$. Choose subgroups
	\[\Z_2^m\supseteq\Z_2^{m-1}\supseteq\cdots\supseteq \Z_2^{m-(l-1)}\]
and 
	\[\rho_i \in \Z_2^{m-(i-1)} \setminus\langle\rho_1,\ldots,\rho_{i-1}\rangle\]
for $1\leq i\leq l$ according to the following procedure:
	\begin{itemize}
	\item Choose $\rho_1\in\Z_2^m$ such that $k_1 = \cod F(\rho_1)$ is maximal.
	\item Given $\Z_2^m\supseteq\cdots\supseteq\Z_2^{m-(i-1)}$ and $\rho_j\in\Z_2^{m-(j-1)}$ for $1\leq j\leq i$, choose $\Z_2^{m-i}\subseteq \Z_2^{m-(i-1)}$ such that every $\rho\in\Z_2^{m-i}$ satisfies
		\[\cod\of{F(\rho|_{R_i}) \embedded R_i} \equiv 0\bmod{4}\]
	where $R_i = F\of{\langle\rho_1,\ldots, F(\rho_i)\rangle}$, and then choose $\rho_{i+1}\in\Z_2^{m-i}$ such that
		\[\cod\of{F\of{\rho_{i+1}|_{R_i}} \embedded R_i} = k_{i+1}\]
	is maximal.
	\end{itemize}
We claim that our choices imply
	\begin{enumerate}
	\item $\dim(R_h) \equiv 0 \bmod{4}$ for all $h$,
	\item $k_h \geq 2k_{h+1}$ for all $h$, and
	\item $k_l = 0$.
	\end{enumerate}
The first point follows by observing that $\dim(R_h) = n - (k_1 + \ldots + k_h)$ by definition and that $k_i \equiv 0\bmod{4}$ for all $i$ by our choices.

To prove the second claim, fix $h\geq 1$. Observe that $\rho_h\in \Z_2^{m-(h-1)}$ and $\rho_{h+1}\in \Z_2^{m-h} \subseteq \Z_2^{m - (h-1)}$, so $\rho_h\rho_{h+1}\in\Z_2^{m-(h-1)}$ as well. By maximality then, we have
	\begin{eqnarray*}
	k_h &\geq& \cod      \of{F\of{\rho_{h+1}|{R_{h-1}}}\embedded R_{h-1}} = k_{h+1} + a, \mathrm{~and}\\
	k_h &\geq& \cod\of{F\of{\rho_h\rho_{h+1}|R_{h-1}}\embedded R_{h-1}} = k_{h+1} + (k_h - a)
	\end{eqnarray*}
where $a = \cod\of{R_{h+1} \embedded F\of{\rho_h\rho_{h+1}}|_{R_{h-1}}}$. Adding these inequalities shows that $k_h \geq 2k_{h+1}$.

Finally, the third claim follows from the second claim together with the estimate
	\[l = \floor{\frac{m+1}{2}} \geq \floor{\frac{\dim(T')}{2}} > \log_2(n) - 1\]
and the fact that $k_1 < n/\sqrt{2}$ by \LEM{InductionLemma}.

We now use these facts to find a transverse intersection. Let $0<j\leq l$ be the smallest index such that $k_j = 0$. For $1\leq i \leq j-1$, let $l_i$ be the number of $(-1)$s in the image of $\rho_j$ in $SO(T_x R_{i-1}\cap \nu_x R_i)$. Geometrically, $l_i$ is the codimension of 
	\[F(\rho_i|_{R_{i-1}})\cap F(\rho_j|_{R_{i-1}})\embedded F(\rho_i\rho_j|_{R_{i-1}}).\]
By replacing $\rho_j$ by $\rho_{j-1}\rho_j$ if necessary, we can ensure that $l_{j-1} \leq \frac{k_{j-1}}{2}$. Observe that this may change $l_i$ for $i<j-1$. Next, replace $\rho_j$ by $\rho_{j-2}\rho_j$ if necessary to ensure that $l_{j-2} \leq \frac{k_{j-2}}{2}$. Observe again that the $l_i$ may have changed for $i<j-2$, but that $l_{j-1}$ does not. Continuing in this way, we may replace $\rho_j$ by $\rho\rho_j$ for some $\rho\in\langle\rho_1,\ldots,\rho_{j-1}\rangle$ to ensure that $l_i \leq \frac{k_i}{2}$ for all $i<j$. 

Now some of the $l_{j-1},l_{j-2},\ldots$ may be zero, but they cannot all be zero because the action of $T'$ is effective and $\rho_j\not\in\langle\rho_1,\ldots,\rho_{j-1}\rangle$. Let $1\leq i\leq j-1$ denote the largest index where $l_i>0$. Observe that $l_i\leq \frac{k_i}{2}$ implies $l_i>0$ and $k_i-l_i>0$.

Consider now the transverse intersection of $F(\rho_j|_{R_{i-1}})$ and $F(\rho_j\rho_i|_{R_{i-1}})$ inside $R_{i-1}$. The intersection is $R_i$ (by choice of $i$), and the codimensions are $l_i$ and $k_i-l_i$. We wish to apply \COR{CORtoPerThm} to this intersection to conclude $\bodd(R_{i-1})=0$. 

First, observe that $k_1 \leq n-\frac{n}{\sqrt{2}} < n/2$ by \LEM{InductionLemma}. Also recall that $k_h\geq 2k_{h+1}$ for all $h$. Hence
	\[2l_i + 2(k_i - l_i) = 2k_i \leq \frac{n}{2^{i-1}} = n - \sum_{h=1}^{i-1} \frac{n}{2^h} \leq n-\sum_{h=1}^{i-1} k_h = \dim(R_{i-1}).\]
Moreover $\dim R_{i-1} \equiv 0\bmod{4}$, so \COR{CORtoPerThm} implies $\bodd(R_{i-1})=0$. Since $R_{i-1} = F(H)$ where $H = \langle\rho_1,\ldots,\rho_{i-1}\rangle$, this concludes the proof in this case.
\end{proof}

We have shown in all five cases the existence of a submanifold $F\subseteq P\subseteq N$ on which $T'$ acts such that $\bodd(P)=0$. As explained at the beginning of the proof, Conner's theorem then implies $\bodd(F) = 0$, as required.

\bigskip
\section{A Corollary and a Conjecture}\label{Observations}
\bigskip

Our first point of discussion regards general Lie group actions. By examining the list of simple Lie groups, one easily shows that $\of{2\rank(G)}^2 \geq \dim(G)$ for all compact, 1-connected, simple Lie groups. The inequality persists for all compact Lie groups. In addition, $\dim(M^n/G)\leq n-d$ clearly implies $\dim(G)\geq d$. Hence, letting $I(M)$ denote the isometry group of $M$, we have the following corollary:

\begin{corollary}\label{CORchi>0}
Let $M^n$ be a closed Riemannian manifold with positive sectional curvature and $n\equiv 0\bmod{4}$. If $\dim I(M) \geq \of{4\log_2 n}^2$ or $\dim M/I(M) \leq n - \of{4\log_2 n}^2$, then $\chi(M)>0$.
\end{corollary}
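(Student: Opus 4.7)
The plan is to reduce both hypotheses of the corollary to the symmetry-rank hypothesis of \THM{chi>0}, which already delivers $\chi(M)>0$ once an effective, isometric torus action of rank at least $2\log_2 n$ is in hand.

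First I would handle the cohomogeneity assumption. If $\dim\bigl(M/I(M)\bigr) \leq n - (4\log_2 n)^2$, then a principal orbit $I(M)\cdot x \subseteq M$ has dimension $n - \dim\bigl(M/I(M)\bigr) \geq (4\log_2 n)^2$; since this orbit is a quotient of $I(M)$ by a closed subgroup, one obtains $\dim I(M) \geq (4\log_2 n)^2$. Thus either hypothesis in the corollary implies $\dim I(M) \geq (4\log_2 n)^2$.

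Next I would invoke the Lie-theoretic bound $\dim(G) \leq \bigl(2\rank(G)\bigr)^2$, valid for every compact Lie group $G$ (the author notes this just before the corollary; it reduces via the classification to a case-check on compact simple Lie groups, and extends to semisimple groups by summing and to general compact groups by adding the central torus factor). Applied to $G = I(M)$, this yields
    \[\rank I(M) \;\geq\; \tfrac12 \sqrt{\dim I(M)} \;\geq\; 2\log_2 n.\]
Let $T^r \subseteq I(M)$ be a maximal torus, so $r \geq 2\log_2 n$; the action of $T^r$ on $M$ is by isometries, and after quotienting by its (finite) ineffective kernel we may take it to be effective without changing $r$. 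Applying \THM{chi>0} then gives $\chi(M) > 0$. No real obstacle is anticipated here: the argument is essentially a packaging of \THM{chi>0}, and the only nonformal ingredient is the elementary Lie-theoretic estimate on $\dim G$ versus $\rank G$.
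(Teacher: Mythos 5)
Your proposal is correct and follows essentially the same route as the paper: reduce the cohomogeneity hypothesis to the bound on $\dim I(M)$ via a principal orbit, apply the inequality $\dim(G)\leq \of{2\rank(G)}^2$ for compact Lie groups to extract a torus of rank at least $2\log_2 n$, and invoke \THM{chi>0}. (The quotient by an ineffective kernel is not even needed, since $I(M)$ already acts effectively on $M$.)
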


We remark that in \cite{Puettmann-Searle02} it was shown that $\chi(M^{2n}) > 0$ if $\dim M/I(M) < 6$, and in \cite{Wilking06} it was shown that $\chi(M^{2n})>0$ if $\dim M/I(M) \leq \sqrt{n}/3 - 1$ or $\dim I(M) \geq 4n-6$.

To conclude, we state a conjecture which would improve the conclusion of the periodicity theorem. Recall that the periodicity theorem rested on \PROPS{mod2lemma}{modplemma}, which we referred to as generalizations of Adem's theorem on singly generated cohomology rings. The conclusion of Adem's theorem was improved by Adams after he developed the theory of secondary cohomology operations (see \cite{Adams60}). The result is the following:
\begin{theorem}[Adams]
Let $p$ be a prime, and let $M$ be a topological space. Assume $H^*(M;\Z_p)$ is isomorphic to $\Z_p[x]$ or $\Z_p[x]/x^{q+1}$ with $p\leq q$.
\begin{enumerate}
 \item If $p=2$, then $k\in\{1,2,4,8\}$. Moreover, $k = 8$ only occurs when $q=2$.
 \item If $p>2$, then $k = 2\lambda$ for some $\lambda|p-1$.
\end{enumerate}

%\begin{enumerate}
%\item (Adem's Theorem) If $p=2$, then $\deg(x)$ is a power of $2$. If $p>2$, then $\deg(x) = 2 \lambda p^r$ for some $r\geq 0$ and $\lambda|p-1$.
%\item (Adams' Theorem) If $p=2$, then $\deg(x)$ is $1$, $2$, $4$, or $8$ with $8$ only occuring in the case $\Z_2[x]/x^3$. If $p>2$, then $\deg(x) = 2\lambda$ for some $\lambda|p-1$.
%\end{enumerate}
\end{theorem}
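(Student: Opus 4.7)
The strategy is to follow Adams's original approach via secondary cohomology operations, building on the primary-operation analysis already carried out in \PROPS{mod2lemma}{modplemma}. Those propositions reduce the task to the following refinements: for $p=2$, rule out $k=2^c$ with $c\geq 4$ (under the assumption $q\geq 2$), and for $p$ odd, rule out $k=2\la p^r$ with $r\geq 1$. In both cases, the primary Adem relations suffice to exclude everything else, but cannot by themselves exclude these remaining degrees, since they are precisely the degrees in which $Sq^k$ (respectively $P^{k/2(p-1)}$) is indecomposable in the Steenrod algebra.

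The plan is to invoke a family of stable secondary cohomology operations $\Phi$ associated to each relation $\sum_i a_i\al_i\be_i=0$ in the mod $p$ Steenrod algebra $\mathcal{A}_p$, where the $\al_i,\be_i$ are primary operations. On a class $y$ satisfying $\be_i(y)=0$ for all $i$, the value $\Phi(y)$ is defined modulo the indeterminacy $\sum_i a_i \al_i H^*(M;\Z_p)$ and obeys a Cartan-type formula relative to the primary operations. For $p=2$, the decisive input is Adams's Hopf-invariant-one relation: for each $c\geq 4$, there is a formula expressing $Sq^{2^c}$ modulo primary decomposables as a $\Z_2$-linear combination of compositions $\Phi_{i,j}$ for certain pairs $0\leq i<j<c$.

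First I would apply this relation to the generator $x\in H^k(M;\Z_2)$ with $k=2^c$. Since $Sq^{2^c}(x)=x^2$ and since $x$ generates $H^*(M;\Z_2)$ through degree $qk\geq 2k$, every primary decomposable landing in degree $2k$ is a scalar multiple of $x^2$, while the $k$-periodicity coming from \PROP{mod2lemma} forces $H^i(M;\Z_2)=0$ whenever $i$ is not a multiple of $k$. A direct inspection shows that each $\Phi_{i,j}(x)$ is well-defined on $x$ and is forced to vanish by these vanishing constraints in intermediate degrees. The Adams relation then gives $x^2=0$, contradicting $q\geq 2$. For $p$ odd, the parallel argument uses mod $p$ secondary operations associated to the odd-primary Adem relations to express $P^{\la p^r}$, modulo primary decomposables, in terms of secondary operations whenever $r\geq 1$; applied to the generator $x$, this forces $x^p=0$ against $p\leq q$.

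The main obstacle is not the deduction above, which follows a uniform template once the relevant relations are in hand, but the construction of the secondary operations and verification of the critical factorization of $Sq^{2^c}$ (respectively of $P^{\la p^r}$). This is the technical heart of \cite{Adams60}, carried out via a careful analysis of the Postnikov tower of $K(\Z_2,n)$ and the early differentials of the Adams spectral sequence; the odd-primary analogue requires the subsequent work of Liulevicius and others. Treated as black-box input, these factorizations combine with the $k$-periodicity guaranteed by \PROPS{mod2lemma}{modplemma} to force the claimed restrictions on $k$ in the singly-generated setting.
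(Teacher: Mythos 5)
The paper offers no proof of this statement: it is quoted as a known theorem of Adams, with everything deferred to \cite{Adams60}, so there is no internal argument to measure your attempt against. That said, your outline matches the standard proof. The reduction is sound: for $\Z_p[x]/x^{q+1}$ with $p\leq q$ the generator $x$ induces periodicity up to degree $qk\geq pk$ and has minimal degree, so \PROPS{mod2lemma}{modplemma} (or Adem's original theorem) leave only the degrees in which $Sq^{k}$, respectively $P^{k/2(p-1)}$, is indecomposable, and Adams's factorization of $Sq^{2^c}$ through secondary operations (together with its odd-primary analogue, due to Liulevicius and Shimada--Yamanoshita rather than to \cite{Adams60} itself) disposes of these, since each $\Phi_{i,j}(x)$ lies in a degree that is not a multiple of $k$ and hence vanishes, forcing $x^2=0$ (resp.\ $x^p=0$) against the height hypothesis. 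Two caveats. First, since the entire technical content is taken as a black box, your write-up is in substance the same as the paper's citation. Second, and more seriously, you never address the clause ``$k=8$ only occurs when $q=2$'': ruling out $\deg x=8$ with $q\geq 3$ is not a consequence of excluding $k=2^c$ with $c\geq 4$, and needs a separate argument (for $q\geq 4$ one can run the secondary-operation argument on $x^2$ to kill $x^4$, but the case $q=3$ requires additional input). As written, your proposal establishes only part of item (1).
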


Observe that singly generated cohomology rings are periodic in the sense of this paper. The corresponding strengthening in our case would be the following:

\begin{conjecture}
Let $p$ be a prime, and let $M$ be a topological space. Assume $x\in H^k(M;\Z_p)$ is nonzero and induces periodicity up to degree %$c$ with $pk\leq c$,
$pk$, and suppose $x$ has minimal degree among all such elements.
\begin{enumerate}
 \item If $p=2$, then $k\in\{1,2,4,8\}$. Moreover, if $x$ induces periodicity up to degree $3k$, then $k\neq 8$.
 \item If $p>2$, then $k = 2\lambda$ for some $\lambda |p-1$.
\end{enumerate}
\end{conjecture}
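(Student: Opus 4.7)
The plan is to adapt Adams's proof of the singly-generated theorem, replacing $\Z_p[x]$ or $\Z_p[x]/x^{q+1}$ by the periodicity hypothesis and replacing primary Steenrod operations by secondary ones. First, one invokes \PROPS{mod2lemma}{modplemma}, whose hypotheses $2l\le c$ (resp.\ $pl\le c$) are weaker than ours, to reduce to the cases $k = 2^s$ (for $p=2$) or $k = 2\lambda p^r$ with $\lambda \mid p-1$ (for $p$ odd). It then remains to exclude $s \geq 4$ in the former case and $r \geq 1$ in the latter.

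The central tool is Adams's secondary cohomology operation $\Phi_{i,j}$ (or Liulevicius's odd-primary analogue), defined on cohomology classes $u$ satisfying primary vanishing conditions such as $Sq^{2^i}u = 0$, with values modulo a computable indeterminacy, and satisfying a decomposition relation of the form
	\[Sq^{2^s}(u) \equiv \sum_{i<j} a_{i,j}\, Sq^{2^s - 2^j}\bigl(\Phi_{i,j}(u)\bigr) \pmod{\text{decomposables}}\]
valid for $s \geq 4$ whenever the relevant vanishing conditions on $u$ hold. The approach is to evaluate such a relation on the minimal-degree generator $x \in H^k(M;\Z_p)$. Periodicity together with minimality already forces $Sq^j(x) = x\cdot x_j$ for $0 < j < k$, as in the proof of \LEM{mod2lemma2}; this puts the necessary secondary operations on $x$ at our disposal. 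Writing $Sq^{2^s}(x) = Sq^k(x) = x^2$, the Adams relation expresses $x^2$ as a sum of terms, each of which should factor as $x \cdot (\cdots)$ via periodicity. Cancelling an $x$ using the injectivity assumption then expresses $x$ as a sum of products of classes of degree strictly less than $k$, contradicting the minimality of $k$ via \LEM{modplemma1}.

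The principal obstacle will be controlling the indeterminacy of the secondary operations in this non-singly-generated setting. In Adams's original argument, the cohomology is $\Z_p[x]$ or its truncation and the indeterminacy is trivial or manifestly a multiple of $x$; in our situation $H^*(M;\Z_p)$ may contain many additional classes, and one must show that each $\Phi_{i,j}(x)$ lies, modulo its indeterminacy, inside the ideal generated by $x$. This should follow from repeated applications of the periodicity identities and the Cartan formula in all degrees below $pk$, in the spirit of the detailed degree-bookkeeping used throughout \SEC{SECmodpperiodicity}. The ``moreover'' clause for $p=2$, $k=8$ (requiring only periodicity up to $3k$ rather than $pk=16$) would need a refined variant of the secondary-operation argument resolvable in this shorter range; this should parallel the observation that the truncated ring $\Z_2[x]/x^3$ is the unique singly-generated setting in which $k=8$ survives, and extending the periodicity range beyond $2k$ is exactly what should suffice to rule out this truncation-like behaviour. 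The odd-primary refinement $r=0$ would proceed analogously using Liulevicius's secondary operations at odd primes.
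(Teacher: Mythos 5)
This statement is labeled a \emph{conjecture} in the paper, and the paper offers no proof of it --- it only records examples ($S^{k-1}\times S^k$ and $S^7\times\mathrm{Ca}P^2$) showing the degree hypotheses are sharp, and explains what a proof would yield. So there is no argument of the author's to compare yours against, and your submission is itself a strategy sketch rather than a proof: the words ``should follow,'' ``would need,'' and ``would proceed analogously'' mark exactly the places where the mathematics has not been done.

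The most concrete gap is at the point where you claim that $Sq^j(x) = x\cdot x_j$ ``puts the necessary secondary operations on $x$ at our disposal.'' It does not. Adams's operations $\Phi_{i,j}$ are defined only on classes $u$ annihilated by the relevant primary operations ($Sq^{2^i}u = 0$, etc.), because they arise from null-homotopies realizing Adem relations. In Adams's singly generated setting this vanishing is automatic, since $H^{k+j}=0$ for $0<j<k$; in the periodic setting $H^{k+j}(M;\Z_2)$ can be large and $Sq^j(x)=x x_j$ is merely a multiple of $x$, typically nonzero. So $\Phi_{i,j}(x)$ need not be defined at all, and the decomposition of $Sq^{2^s}$ cannot simply be ``evaluated on $x$.'' One would have to either kill the classes $x_j$ first, work in a quotient or localization where the ideal $(x)$ behaves like the singly generated ring, or build functional/secondary operations adapted to the periodicity map $\cdot x$ --- none of which you supply. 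The indeterminacy problem you flag is a second genuine obstruction, not a bookkeeping exercise: the indeterminacy of $\Phi_{i,j}$ in degree $2k-2^j$ involves images of primary operations from lower degrees, which the periodicity hypothesis does not obviously control. Finally, the ``moreover'' clause for $k=8$ and the odd-primary case $r=0$ are asserted without any argument. The overall direction (import Adams's secondary-operation machinery into the periodic setting, mirroring how \PROPS{mod2lemma}{modplemma} import Adem's primary-operation argument) is the natural one and is presumably what the author had in mind in posing the conjecture, but as written this is a research plan, not a proof.
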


We first note that, regarding the first statement, %two examples show that the assumptions $2k\leq c$ and $3k\leq c$ are necessary. Indeed, 
$S^{k-1}\times S^k$ is $k$-periodic but not $k'$-periodic for any $k'<k$, and $S^7\times \mathrm{Ca}P^2$ is $8$-periodic but not 4-periodic. Hence one must assume periodicity up to degree $2k$, respectively $3k$.

Second, we wish to outline how a proof of this conjecture would imply that \THM{chi>0} holds in all even dimensions. First, one would use the conjecture to improve \PROP{mod2lemma} to prove the following: If $M$ is a simply connected, closed manifold such that $H^*(M^n;\Z_2)$ is $k$-periodic with $3k\leq n$, then $M$ has the $\Z_2$-cohomology ring of $S^n$, $\C P^{n/2}$, $\HH P^{n/4}$, $\HH P^{(n-3)/4}\times S^3$, or $\HH P^{(n-2)/4}\times S^2$. Indeed, a proof of the $\Z_2$-periodicity conjecture combined with Poincar\'e duality implies this when $n\not\equiv 2\bmod{4}$.

Suppose then that $n\equiv 2\bmod{4}$. We may assume without loss of generality that $H^4(M;\Z_2)\cong\Z_2$ and that the generator $x$ has minimal degree among all elements inducing periodicity. It follows that $Sq^1(H^3(M;\Z_2)) = 0$, $Sq^1(H^7(M;\Z_2)) = 0$, and $Sq^2(H^2(M;\Z_2)) = 0$. 

By periodicity and Poincar\'e duality, $H^2(M;\Z_2)\cong \Z_2$. Let $z_2\in H^2(M;\Z_2)$ be a generator. If $H^3(M;\Z_2)=0$, it follows that $H^*(M;\Z_2)\cong H^*(S^2\times \HH P^{(n-2)/4};\Z_2)$. To see that this is the case, suppose there exists a nonzero $u \in H^3(M;\Z_2)$. Using Poincar\'e duality and periodicity again, we conclude the existence of a relation $uv = xz$ for some $v\in H^3(M;\Z_2)$. One can now use the Cartan formula to prove that $Sq^4(uv) = 0$ and $Sq^4(xz) = x^2z \neq 0$, which is a contradiction.

Given this, the basic outline of our proof of \THM{chi>0} implies the result without the assumption that the dimension is divisible by four. In fact, the proof simplifies since one does not have to keep track of the divisibility of the codimensions. The optimal bound, as far as the proof is concerned, would be $r \geq \log_2(n) - 2$.

\bibliographystyle{amsplain}
\bibliography{myrefs}
\end{document}